\DeclareMathAlphabet{\mathpzc}{OT1}{pzc}{m}{it}
\newtheorem{te}{Theorem}[section]
\newtheorem{os}[te]{Remark}
\newtheorem{prop}[te]{Proposition}
\newtheorem{lem}[te]{Lemma}
\newtheorem{coro}[te]{Corollary}
\numberwithin{equation}{section}
\newcommand{\com}[1]{{\textcolor{blue}{#1}}}
\def \l { \left( }
\def \r {\right) }
\def \ll { \left\lbrace }
\def \rr { \right\rbrace }
\newcommand{\lb}{\left (}
\newcommand{\rb}{\right )}
\newcommand{\lbb}{\left [}
\newcommand{\rbb}{\right ]}
\newcommand{\labs}{\left |}
\newcommand{\rabs}{\right |}
\newcommand{\lbrb}[1]{\lb #1 \rb}
\newcommand{\lbbrbb}[1]{\lbb#1\rbb}
\newcommand{\lbbrb}[1]{\lbb#1\rb}
\newcommand{\lbcurly}{\left\{}
\newcommand{\rbcurly}{\right\}}
\newcommand{\abs}[1]{\labs#1\rabs}
\newcommand{\curly}[1]{\lbcurly#1\rbcurly}
\newcommand{\simi}{\stackrel{\infty}{\sim}}
\newcommand{\simo}{\stackrel{0}{\sim}}
\newcommand{\ind}[1]{1_{\ll #1 \rr}}
\newcommand{\limi}[1]{\lim_{#1\to \infty}}
\newcommand{\limsupi}[1]{\limsup_{#1\to \infty}}
\newcommand{\liminfi}[1]{\liminf_{#1\to \infty}}
\newcommand{\limo}[1]{\lim_{#1\to 0}}
\newcommand{\Es}{\mathds{E}}
\newcommand{\Rb}{\mathbb{R}}
\newcommand{\Pb}{P}
\newcommand{\Pbb}[1]{P\lb #1\rb}
\newcommand{\Eds}[1]{\Es\lbb #1\rbb}
\newcommand{\LL}{L\'{e}vy }
\begin{document}

	\title[]{Semi-Markov processes, integro-differential equations and anomalous diffusion-aggregation}
	\author[]{Mladen Savov}
	\author[]{Bruno Toaldo}
	\address[]{Institute of Mathematics and Informatics, Bulgarian Academy of Sciences, Akad.  Georgi Bonchev street
		Block 8 - 1113, Sofia (Bulgaria)}
	\address[]{Dipartimento di Matematica e Applicazioni ``Renato Caccioppoli'' - Universit\`{a} degli Studi di Napoli Federico II, Via Cintia, Monte S. Angelo - 80126, Napoli (Italy)}
\email[]{mladensavov@math.bas.bg}	
	\email[]{bruno.toaldo@unina.it}
	\keywords{Semi-Markov processes, time-changed processes, additive processes, subordinators, integro-differential equations, fractional equations}
	\date{\today}
	\subjclass[2010]{60K15, 60J65, 60J25, 60G51}

		\begin{abstract}
In this article integro-differential Volterra equations whose convolution kernel depends on the vector variable are considered and a connection of these equations with a class of semi-Markov processes is established. The variable order $\alpha(x)$-fractional diffusion equation is a particular case of our analysis and it turns out that it is associated with a suitable (non-independent) time-change of the Brownian motion. The resulting process is semi-Markovian and its paths have intervals of constancy, as it happens for the delayed Brownian motion, suitable to model trapping effects induced by the medium. However in our scenario the interval of constancy may be position dependent and this means traps of space-varying depth as it happens in a disordered medium. The strength of the trapping is investigated by means of the asymptotic behaviour of the process: it is proved that, under some technical assumptions on $\alpha(x)$, traps make the process non-diffusive in the sense that it spends a negligible amount of time out of a neighborhood of the region $\text{argmin}(\alpha(x))$ to which it converges in probability under some more restrictive hypotheses on $\alpha(x)$.
\end{abstract}

	\maketitle

\tableofcontents

\section{Introduction}
In last years the interplay between anomalous diffusion phenomena and integro-differential (fractional type) equations have gained considerable attention by the scientific comunity. This is certainly due to the fact that fractional equations are very popular in applications and in the theoretical literature (see, for example, Meerschaert and Sikorskii \cite{FCbook} for general information). As non-local equations in the time-variable they are able to include memory effects in the evolution and this is certainly usefull in applications (see, for example, Hairer et al. \cite{hairer} for very recent developments, see \cite{FCbook} or Metzler and Klafter \cite{Metzler} for a review of classical applications or Georgiou and Scalas \cite{nicos}, Raberto et al. \cite{raberto}, Scalas \cite{scalas} for more exotic models). One of the first and more natural model is the so-called fractional diffusion related to the equation, for $\alpha \in (0,1)$,
\begin{align}
\partial_t^\alpha q = \frac{1}{2}\Delta q
\label{andiff} 
\end{align}
and with anomalous diffusion phenomena (see \cite{Metzler} for a review of these relationships). Equation \eqref{andiff} is said to be related with subdiffusive phenomena in the sense that the mean square displacement,
\begin{align}
(\Delta x)^2 \, = \, \int_{\mathbb{R}^d} | x-u|^2 q(t, x-u) du
\end{align}
behaves as $(\Delta x)^2 \sim Ct^\alpha$ as $t \to \infty$, for $\alpha \in (0,1)$, Hence any model which can be associated with \eqref{andiff} is less then diffusive, since $\alpha \in (0,1)$. From the probabilistic literature we know that the process associated with \eqref{andiff} is a Brownian motion time-changed with the inverse of an independent stable subordinator (this is due to Baeumer and Meerschaert \cite{fracCauchy}, see also Bazhlekova \cite{bazhlekova} for pioneering results on the fractional Cauchy problem). The resulting process is also called delayed Brownian motion (Magdziarz and Schilling \cite{magda}) since its sample paths remain constant for time-intervals determined by the jumps of the stable subordinator: hence the process is delayed in the sense that the Brownian paths are stretched by the random time-change (see \cite{magda} also for a detailed investigation of the asymptotic of the delayed Brownian motion, or Capitanelli and D'Ovidio \cite{mirkorafasym} for asymptotic properties of diffusion time-changed via independent inverse subordinators). Taken in full generality, the equation \eqref{andiff} has the form
\begin{align}
\frac{d}{dt} \int_0^t (q(s,x)-q(0,x)) \, k(t-s) \, ds \, = \, G q(t, x)
\label{genintro}
\end{align}
where $G$ generates a Markov process $M$. The corresponding process is the time-change of $M$ with the inverse of an independent subordinator whose \LL measure $\nu(\cdot)$ is given by $\nu(t, \infty)=k(t)$ (this is due to Chen \cite{zqc}). We can say that this kind of processes are delayed in the same sense as for the delayed Brownian motion, since the paths remain constants due to the jumps of the corresponding subordinator.
Intervals of constancy are a classical feature of semi-Markov processes (see Harlamov \cite{harlamov} for the modern formulation of the corresponding theory) and it is true indeed that the delayed Brownian motion and in general delayed Markov processes are semi-Markov (see Cinlar \cite{cinlarsemi} and also Meerschaert and Straka \cite{meerstra} for the interpretation as limit of continuous time random walks). Hence the memory described by non-locality of the equation (in the time variable) is introduced in the sense that the lack of memory of the exponential distribution is lost, due the interval of constancy.

The intervals of constancy of the delayed Brownian motion are suitable to model trapping effects induced by the medium, in case the traps are homogeneous in space. However the traps are often of space-varying depth in the sense that the strength of the trapping may be position dependent as it happens in a disordered medium (e.g. \cite{fedofalco, barkai, stick, strakafedo, wong}). In the present paper we provide a model which is suitable to include this heterogeneity. The starting point is the ``variable order'' generalization of \eqref{genintro}, i.e., the equation
\begin{align}
\frac{d}{dt} \int_0^t (q(s,x)-q(0,x)) \, k(t-s,x) \, ds \, = \, G q(t, x).
\label{volterra}
\end{align}
It is clear that \eqref{volterra} specializes to
\begin{align}
\partial_t^{\alpha(x)} q \, = \,\frac{1}{2} \partial_x^2 q
\label{andifva}
\end{align}
by suitably choosing $k$. First we provide a connection of \eqref{volterra} with semi-Markov processes. It turns out that to construct the corresponding process one has to consider a Markov additive process $\l M_t, \sigma_t \r$ where the additive component $\sigma_t$ is strictly increasing and has a time-dependent \LL measure which is determined by the path (the current position) of the Markov process $M_t$. This construction will be made precise in Section \ref{sec2} by means of the theory of Markov additive processes introduced by Cinlar \cite{cinlar1, cinlar2}. In the case of \eqref{andifva} the first coordinate $M_t$ is given by a Brownian motion. Now let $L_t= \inf \ll s \geq 0: \sigma(s)>t \rr$ and define $X(t):=M(L(t))$. It is clear that $M$ and $L$ are now dependent processes and thus the random length of the intervals of constancy which are determined by the jumps of $\sigma$ depends on the position of $M$. This gives rise to a very heterogeneous behaviour of the process: the trapping effect induced by the time-change is space-varying. When $\sigma_t$ behaves locally as an $\alpha(x)$-stable subordinator whose order $\alpha(x)$ is determined by the position of $M_t$ the process $M(L(t))$ is associated with \eqref{andifva}. We found under some technical assumptions on $\alpha(x)$ that, a.s.,
\begin{align}
t^{-1} \int_0^t \ind{ M(L(w)) \in A} dw \sim 1 \text{ as } t \to \infty,
\label{asocc}
\end{align}
where $A$ is a suitable neighborhood of the region $\text{argmin}(\alpha(x))$ and the condition on $\alpha(x)$ depends on the structure of $A$. For example suppose that there exists $\beta>0$ small enough such that the region $A_\beta:= \ll x \in \mathbb{R}: \alpha(x)<\alpha_{\min}+\beta <1 \rr$ is bounded and has a Lebesgue null boundary, then \eqref{asocc} holds true for $A_\beta$, for all $\beta \leq \beta_0$ and some $\beta_0>0$, if $2\alpha_{\min}< \min \l \lim_{x \to \infty} \alpha(x) , \lim_{x \to -\infty} \alpha(x) \r$ but if instead one has that the $\alpha(x)$ satisfies $2\alpha_{\min}> \min \l \lim_{x \to \infty} \alpha(x) , \lim_{x \to -\infty} \alpha(x) \r$ there is attraction to infinity and so the process is diffusive, i.e., for all $\beta \leq \beta_0$ and all $K>0$, a.s.,
\begin{align}
t^{-1} \int_0^t \ind{M(L(w)) \in A^c_\beta \cap [-K,K]^c}dw \sim 1 \text{ as } t \to \infty. 
\end{align}
Another case we cover is when $\text{argmin}(\alpha(x))$ is unbounded and $\text{argmin}(\alpha(x)) = A_\beta$ for all $\beta$ small enough: here the conditions on $\alpha(x)$ can be relaxed depending on $\lim_{x \to \infty} \mathpzc{l} \l \text{argmin} (\alpha(x)) \cap [-x,x] \r$, where $\mathpzc{l}(\cdot)$ is the Lebegue measure. In Section \ref{secconv} we cover several situations of this type. 
Hence the trapping effect induced by the time-change, depending on $\alpha(x)$, can be so much stronger in the region $\text{argmin}(\alpha(x))$ than in the rest of $\mathbb{R}$ that the amount of time spent by the process in that position grows linearly with $t$, as $t \to \infty$, a.s.
Further when $\alpha (x)$ satisfies some more restrictive conditions (including that the set $\text{argmin} (\alpha(x))$ is a union of intervals and $x \mapsto \alpha(x)$ jumps on the minimum) we have proved that
\begin{align}
\lim_{t \to \infty }P^x \l M(L(t)) \in \text{argmin} (\alpha(x) )\r =1.
\end{align}
Hence the behaviour of the resulting process is so heterogeneous that it is completely far from a diffusion since it spends a negligible amount of time far from $\text{argmin}(\alpha(x))$ and in some cases the whole probability mass converges to the region $\text{argmin}(\alpha(x))$.

We call this phenomenon anomalous aggregation, inspired by Fedotov \cite{fedopre} who observed such a behaviour in the context of chemotaxis and anomalous subdiffusive transport. We remark that aggregating phenomena in the context of anomalous diffusion have been observed in other situations (e.g. \cite{campos, barkai, shushin}), and that a connection with fractional order equations has been argued in \cite{checgore, fedofalco}).

 \section{Construction of the process}
 \label{sec2}
We recall in this section some facts from the theory of Markov additive processes and semi-Markov processes (for this we refer to Cinlar \cite{cinlar2, cinlarlevy, cinlarsemi}) and we introduce our assumptions from the point of view of this theory.
 \subsection{Additive processes}
 \label{map}
Let $\l \Omega, \mathcal{F}, \mathcal{F}_t, M_t, \theta_t, P^x \r$ be a Markov process on $\mathbb{R}^d$ and let $\sigma= \ll \sigma_t ; t \geq 0 \rr$, be a family of functions from $\l \Omega, \mathcal{F} \r$ into $\l \mathbb{R}^m, \mathcal{B} \l \mathbb{R}^m \r \r$. Then $(M, \sigma) = \l \Omega, \mathcal{F}, \mathcal{F}_t, M_t, \sigma_t, \theta_t, P^x \r$  is said to be a Markov additive process if it holds that \cite[Definition 1.2]{cinlar2}
\begin{enumerate}
\item $t \mapsto \sigma_t$ is right-continuous with left limits, $\sigma_0=0$, $\sigma_t=\sigma_\zeta$ for any $t \geq \zeta$,
\item for each $t \geq 0$, $\sigma_t :  \Omega  \mapsto  \mathbb{R}^m$, is measurable with respect to $\mathcal{F}_t$ and $\mathcal{B} \l \mathbb{R}^m \r$,
\item for each $t \geq 0$, $B \in \mathcal{B}\l \mathbb{R}^d \r$, $B^\prime \in \mathcal{B} \l \mathbb{R}^m \r$ the mapping $\mathbb{R}^d \ni x \mapsto P^x \l M_t \in B, \sigma_t \in B^\prime \r \in [0,1]$ is in $\mathcal{B} \l \mathbb{R}^d \r$,
\item \label{theta} for each $s,t \geq 0$, $\sigma_{t+s} = \sigma_t + \sigma_s \circ \theta_t$, a.s.,
\item for each $s,t \geq 0$, $x \in \mathbb{R}^d$, $B \in \mathcal{B}\l \mathbb{R}^d \r$, $B^\prime \in \mathcal{B} \l \mathbb{R}^m \r$, 
\begin{align}
P^x \l M_s \circ \theta_t \in B, \sigma_s \circ \theta_t \in B^\prime \mid \mathcal{F}_t \r \, = \, P^{M(t)}  \l M_s \in B, \sigma_s \in B^\prime \r.
\end{align}
\end{enumerate} 
In common situations, and in this paper, the second coordinate $\sigma$ is one-dimensional and striclty increasing. Note that conditionally on a path $M(s), s \leq t$ the process $\sigma(t)$ has independent increments and it can be decomposed analogously to the L\'evy's decomposition as
\begin{align}
\sigma = A + \sigma^f + \sigma^c + \sigma^d
\end{align}
where $A$ is an additive functional of $M$ (a drift component), $\sigma^f$ is a purely discontinuous process whose jump are fixed by $M$, $\sigma^c$ is a continuous component and $\sigma^d$ is stochastically continuous. If one assume that $M(y)$ is a Hunt process with a reference measure and that $\sigma = \sigma^d$ is strictly increasing one can apply \cite[formula (2.23)]{cinlarlevy} to say that
\begin{align}
\mathds{E}\left[ e^{-\lambda \sigma(y) } \mid M(s), s \leq y \right] \, = \, e^{-\int_0^\infty \l 1-e^{-\lambda s} \r \int_0^t \nu(ds, M_y) dH_y}
\label{levyrepr}
\end{align}
where $H_y$ is a continuous  additive functional of $M$ and $\nu(\cdot, x)$ is a family of L\'evy measures supported on $(0,\infty)$ parametrized by $x$. In the present paper we deal with this kind of processes to construct a class of semi-Markov processes governed by \eqref{volterra} (see also \cite{kaspi} for construction of semi-Markov processes with regenerative sets).

 \subsection{The semi-Markov model}
 \label{defass}
In the present paper the process $M$ defined as $M=\l \Omega, \mathcal{F}, \mathcal{F}_y,  M_y , \theta_y, P^x \r$ will be a Hunt process on $\l \mathbb{R}, \mathcal{B} \l \mathbb{R}^d \r \r$, $\mathcal{F}_y$, i.e., it is right-continuous, $y \mapsto M_y$ is a.s. right-continuous, $M$ is normal and strong Markov with respect to $\mathcal{F}_y$ and quasi-left-continuous on $[0, \infty)$ (the process is non-explosive). It will be further true that $M_y$ is a Feller process, and thus it is associated with a semigroup of operator $\ll T_y \rr_{y \geq 0}$ defined by $\l T_yu \r(x) = \mathds{E}^x u(M_y)$, such that $T_y:C_0 \l \mathbb{R}^d \r \mapsto C_0 \l \mathbb{R}^d \r$, where $C_0 \l \mathbb{R}^d \r$ denotes the space of continuous functions on $\mathbb{R}^d$ vanishing at infinity, and strongly continuous in the sup-norm $\left\| \cdot \right\|$, i.e. $\left\| T_yu-u \right\| \to 0$ as $y \to 0$. The process $\l \Omega, \mathcal{F}, \mathcal{F}_y,  M_y , \sigma_y ,\theta_y, P^x \r$ will be an additive process with $\sigma_y$ one-dimensional, strictly increasing and constructed as follows. Let $D \in \mathbb{R}^+ \times \mathbb{R}^d$ be a Borel set and define
\begin{align}
\mu_M(D) \, = \, \mathpzc{l} \l  \ll  y \geq 0 : (y, M(y)) \in D \rr \r.
\end{align}
where $\mathpzc{l}$ is the Lebesgue measure. For Borel sets $D= A \times S$ the measure $\mu$ gives, informally, the amount of time in $A$ spent by $M_y$ in $S \in \mathcal{B} \l \mathbb{R}^d \r$. When $A$ is fixed we may define the measure on $\l \mathbb{R}^d, \mathcal{B} \l \mathbb{R}^d \r\r$ 
\begin{align}
\mu_{M,A}(S) \, : = \, \mu_M(A \times S).
\end{align}
By the definition of occupation measure we have that the identity
\begin{align}
\int_A u(M(y)) \mathpzc{l}(dy) \, = \, \int_{\mathbb{R}^d} u(x) \mu_{M,A}(dx)
\end{align}
is valid for every (measurable) non-negative function $u$ on $\mathbb{R}^d$. Hence we may assume on the line of \eqref{levyrepr} that by fixing $A= [0,y]$ we have
\begin{align}
\mathds{E}^x \left[ e^{-\lambda \sigma(y)}  \right] \, = \, \mathds{E}^xe^{- \int_0^\infty \l 1-e^{-\lambda s} \r \int_{\mathbb{R}^d} \nu(ds, w) d\mu_{M, [0,y]}(dw)},
\label{laplexpdef}
\end{align}
where $\nu(\cdot,w)$ is, for any $w \in \mathbb{R}^d$, the L\'evy measure of some subordinator, i.e., it is supported on $(0, \infty)$ and such that the integrability condition
\begin{align}
\int_0^\infty (s \wedge 1) \nu(ds, w) \, < \infty
\end{align} 
is fulfilled  for any $w \in \mathbb{R}^d$.
Hence if $\mu_{M,A}(dw)$ is absolutely continuous with respect to the Lebesgue measure one has that
\begin{align}\label{eq:sigma}
\mathds{E}^x \left[ e^{-\lambda \sigma(y)} \mid M(w), w \leq y \right] \, = \, e^{ -\int_0^\infty \l 1-e^{-\lambda s} \r \int_{\mathbb{R}^d} \nu(ds, w) l_{M,[0,y]}(w)dw}
\end{align}
where $l_{M, [0,y]}(w)$ is the Radon-Nycodim derivative (local time of $M$ at $w$). Of course one can choose a version of the local time such that $l_{X,[0,y]}(w, \omega)$ is a well defined r.v. for every $\omega$ so $l_{X,[0,y]}(w, \omega)$  is measurable $( \mathbb{R}^+ \times \Omega \mapsto \mathbb{R}^d ) $; in the end
\begin{align}
\mathds{E}^x \left[ e^{-\lambda \sigma(y)} \mid M(w), w \leq y \right] \, = \, e^{- \int_0^\infty \l 1-e^{-\lambda s} \r \int_{\mathbb{R}^d} \nu(ds, w) d\mu_{M, [0,y]}(dw, \omega)}.
\end{align}
We will use the notation
\begin{align}
\mathds{E}^x \left[ e^{-\lambda \sigma(y)} \mid M(w), w \leq y \right] \, = \, e^{-\int_0^y f \l \lambda, M_w \r dw}
\end{align}
where the functions
\begin{align}
[0, \infty) \times \mathbb{R}^d \mapsto f(\lambda, x) \, = \, \int_0^\infty \l 1-e^{-\lambda s} \r \nu(ds, x)
\end{align}
are such that $\lambda \mapsto f(\lambda, x)$ are a family of Bernstein functions parametrized by $x \in \mathbb{R}^d$. We remark that $f(\lambda, x)$ can be viewed as the Laplace exponents of the subordinators representing the increments of $\sigma$ when $M_w=x$ (see Schilling et al. \cite{librobern} for further information on Bernstein functions).
We remark that the process $\l M_t, \sigma_t \r$ is a strong Markov process adapted to $\mathcal{F}_t$, and the strong Markov property holds in the sense that, for any $\mathcal{F}$ random variable $Z$ and $\mathcal{F}_t$ stopping time $T$, one has
\begin{align}
\mathds{E}^x \left[ Z \circ \theta_T \mid \mathcal{F}_T \right] 	\, = \, \mathds{E}^{M(T)} [Z].
\label{stmp}
\end{align}
We remark that the process $\l M, \sigma \r = \l \Omega, \mathcal{F}, \mathcal{F}_t, M_t, \sigma_t, \theta_t, P^x \r $ is not a Markov process in the classical sense only because of the action of $\theta_t$ (Item \ref{theta} of Section \ref{map}).

Consider now the process $M_y$ at the time $t= \sigma(y)$. One can easily let the second coordinate of the Markov additive process $\l M_y, \sigma_y \r$ take value on the whole real line by considering the couple process $\l M_y, z+\sigma_y \r$ for $z \in \mathbb{R}$, and we define for $t \in \mathbb{R}$,
\begin{align}
L(t) = \inf \ll s \geq 0 : z+\sigma(s) > t \rr.
 \end{align} 
 Then consider the random set
\begin{align}
\mathcal{R}: = \overline{\ll z+ \sigma(y): 0 \leq y < \infty \rr}
\end{align}
which is the range of $z+\sigma(y)$ so that
\begin{align}
\mathcal{R} = \ll z+ \sigma(y): 0 \leq \sigma(y) < \infty \rr \cup \ll z+\sigma(y-) : y \in \mathcal{J} \rr
\end{align}
where
\begin{align}
\mathcal{J} = \ll 0 \leq s < \infty : \sigma(s)-\sigma(s-)>0 \rr,
\end{align}
and so
\begin{align}
\mathcal{R}^c = \bigcup_{s \in \mathcal{J}} (z+\sigma (s-), z+\sigma(s)).
\end{align}
Then let
\begin{align}
&g(t) = \sup \ll s<t: s \in \mathcal{R} \rr
&H(t) = \inf \ll s > t: s \in \mathcal{R} \rr.
\label{gh}
\end{align}
If we denote $\sigma^z(y) : = z+\sigma(y)$ we can rewrite the quantities \eqref{gh} as
\begin{align}
g(t) = \sigma^z (L(t)-), \qquad H(t) = \sigma^z (L(t)).
\end{align}
Finally we are ready to define for any $t \in \mathbb{R}$
\begin{align}
 X(t) = M(L(t)), \qquad    g(t) \leq t < H(t). 
\label{defx}
\end{align}
Note that definition \eqref{defx} is valid also for $t<0$ and is equivalent to
\begin{align}
X(t) = M(y), \qquad  \sigma^z(y-) \leq t < \sigma^z(y).
\end{align}
Note that the process $X(t)$ is a semi-Markov process in the sense that it enjoys the Markov property at any stopping time $T$ such that
\begin{align}
T(\omega) \in \ll s : \sigma_y(\omega) = s \text{ for some y} \rr,
\end{align}
see the discussion in \cite[Section 4b]{cinlarsemi}.
The semi-Markov property can be equivalently viewed in the sense of Gihman and Skorohod (see Gihman and Skorohod \cite[III.3]{gihman} or Harlamov \cite[III.12]{harlamov}): define
\begin{align}
\gamma(t) \, : = \, t - 0 \vee \sup \ll s \leq t : X(s) \neq X(t) \rr, \qquad t \geq 0,
\end{align}
then one has that the couple process $\l X_t, \gamma_t \r$ is a (strong) Markov process (compare with Meerschaert and Straka \cite[Section 4]{meerstra}).

\section{The governing integro-differential equation}
Let $\Pi_t$ be the operator
\begin{align}
(\Pi_tu)(x) \, := \, \mathds{E}\left[ u(X(t)) \mid X(0) = x \right].
\end{align}
In this section we establish a connection between the mapping
\begin{align}
t \mapsto \Pi_t u, 
\end{align}
for suitable functions $u$, and
the equation
\begin{align}
\frac{d}{dt} \int_0^t \l q(s, \cdot) -q(0, \cdot)\r \, \bar{\nu}(t-s,\cdot) ds \, = Gq(t)
\label{eqdiscussion}
\end{align}
where
\begin{align}
\l \mathfrak{D}_t^\cdot q(t) \r (\cdot) := \frac{d}{dt} \int_0^t \l q(s, \cdot) -q(0, \cdot)\r \, \bar{\nu}(t-s,\cdot) ds \
\label{defdfrac}
\end{align}
and, for any $s>0$, $x \in \mathbb{R}^d$,
\begin{align}
\bar{\nu}(s, x) \,  : = \, \nu((s, \infty), x)
\end{align}
and $G$ is the generator of the Markov process $M$. In what follows we will often write as above $q(t)$ instead of $q(t,\cdot)$ or $q(t,x)$, when the dependence on the vector variable $x \in \mathbb{R}^d$ is not used.
We will show that $t \mapsto \Pi_tu$ satisfies \eqref{eqdiscussion} in the mild sense, see below for the definition of mild solution.
Let us remark that in the case
\begin{align}
\nu(ds, x) \, = \, \frac{\alpha(x) s^{-\alpha(x)-1}}{\Gamma(1-\alpha(x))}ds,
\end{align}
for $\alpha(x)$ strictly between zero and one, then one has
\begin{align}
\bar{\nu}(s, x) \, = \, \frac{s^{-\alpha(x)}}{\Gamma(1-\alpha(x))}.
\label{tailfrac}
\end{align}
By substituting \eqref{tailfrac} in \eqref{defdfrac} we obtain the fractional derivative of variable order $\alpha(x)$: this is because when $x \mapsto \alpha(x)$ is constant the operator becomes a genuine fractional derivative of order $\alpha \in (0,1)$ called the regularized fractional Riemann-Liouville derivative and also Dzerbayshan-Caputo derivative (see Meerschaert and Sikorskii \cite[Chapter 2]{FCbook} for a complete discussion). When $x \mapsto \alpha(x)$ is constant the genuine time-fractional equation has a well-known probabilistic interpretation since Baeumer and Meerschaert \cite{fracCauchy}: take $\sigma^\alpha$ an $\alpha$-stable subordinator independent from the Markov process $M$, let $L^\alpha(t):= \inf \ll s \geq 0 : \sigma^\alpha(s) >t \rr$ and define $X(t)=M\l L^\alpha(t) \r$, then the mean value $\mathds{E}^x u(X(t))$ satisfies the time-fractional equation. When the subordinator considered is not necessarily stable, but a general subordinator $\sigma^f$ with Laplace exponent $f$ independent from $M$ then the equation governing the mean value of $X(t)=M \l L^f(t) \r$ has been written down in different forms by several authors (e.g. \cite{zqc, kochu, KoloCTRW, kololast, magda, meertri, meerpoisson, meertoa, toaldopota, toaldodo}). The more general and at the same time explicit approach is proposed by Chen \cite{zqc}: if $M$ is a Markov process associated with a semigroup on some Banach space $\mathfrak{B}$ generated by $G$ and $\sigma^f (t)$ is an independent strictly increasing subordinator with Laplace exponent $f(\lambda)$ and inverse process $L^f(t)$ then $q(x,t):=\mathds{E}^x u(M(L^f(t)))$ is the unique solution to
\begin{align}
\frac{d}{dt} \int_0^t \l q(s) -q(0)\r \, \bar{\nu}(t-s) ds \, = \, G q(t), \qquad q(0) =u \in \text{Dom}(G).
\label{nodepx}
\end{align}
The reader can consult Capitanelli and D'Ovidio \cite{mirkoraf} for a different approach based on Dirichlet forms, Meerschaert et al. \cite{meerbounded} for a detailed study of time-fractional equations on bounded domains, also Bazhlekova \cite{bazh15} for an analytical study of integro-differential equations of the form \eqref{nodepx} with completely monotone kernels and  Beghin and Ricciuti \cite{costa} or Orsingher et al. \cite{orsrictoapota} for variable order $\alpha(t)$ equations.
Our equation \eqref{eqdiscussion} is more general in the sense that the kernel of the convolution in \eqref{defdfrac} depends on the vector variable $x$. Equations having this form have been considered, from a probabilistic point of view, in Baeumer and Straka \cite[Section 6.2]{baemstra} (the fractional case) and the associated processes are obtained as limit of continuous times random walks, and Orsingher et al. \cite{orsrictoasemi}. In this paper the authors considered a Markov additive process $\l M_t, \sigma_t \r$ where $M_t$ is a Markov chain and $\sigma_t$ depends on $M_t$ as in \eqref{laplexpdef} and proved that the mean value $\mathds{E}^x u(M(L(t)))$ satisfies \eqref{eqdiscussion} with 
\begin{align}
(Gu)(x) \, = \, \theta_x \int \l u(y)-u(x) \r h_x(dy)
\end{align}
where $h_x(\cdot)$ are the transition probabilities of the jump chain embedded in $M$ and $\theta_x$ the parameters of the exponential waiting times. See also Garra et al. \cite{garra} for the variable order fractional equation governing a counting process.

In this section we show that the equation \eqref{eqdiscussion} governs in the mild sense the mean value of general semi-Markov processes, obtained as a time-change, when $M_t$ is not necessarily stepped.
We will assume throughout this section that the processes $M_t$ and $\l M_t, \sigma_t \r$ are Feller processes and thus they are associated with semigroups of operators, respectively, $T_t$ and $P_t$, which map the Banach space of continuous functions vanishing at infinity (on $\mathbb{R}^d$ and $\mathbb{R}^d \times [0, \infty)$) equipped with the sup-norm $\left\| \cdot \right\|$, into itself. The semigroups are also strongly continuous, i.e., they are such that $\left\| T_tu-u \right\| \to 0$ as well as $\left\| P_th-h \right\| \to 0$ for $t \to 0$ for any $u \in C_0 \l \mathbb{R}^d \r$ and $h \in C_0 \l \mathbb{R}^d \times [0, \infty) \r$. We will denote the generators of $T_t$ and $P_t$, respectively, $\l G, \text{Dom}(G) \r$ and $\l A, \text{Dom}(A) \r$. Recall that the generator is the operator
\begin{align}
Gu:= \lim_{t \to 0} \frac{T_tu-u}{t}
\end{align}
with domain
\begin{align}
\text{Dom}(G) \, := \, \ll u \in C_0 \l \mathbb{R}^d \r: \lim_{t \to 0} \frac{T_tu-u}{t} \text{ exists as uniform limit} \rr.
\end{align}
We will assume that $C_c^\infty \l \mathbb{R}^d \r \subset \text{Dom}(G)$ and we know that this implies (e.g. \cite[Theorem 2.21]{schillinglevy}) that $G$ has the form
\begin{align}
(Gu) (x) \, = \, &-c(x) u(x)+l(x) \cdot \nabla u(x) + \frac{1}{2} \text{div} Q(x) \nabla u(x) \notag \\
& + \int_{\mathbb{R}^d} \l u(x+y)-u(x)- \nabla u (x) \cdot y  \chi (|y|)  \r N(x,dy)
\label{reprg}
\end{align}
where $c(x) \geq 0$, $(l(x), Q(x), N(x, \cdot))$ is a L\'evy triplet for any fixed $x \in \mathbb{R}^d$ with $Q(x) \in \mathbb{R}^{d\times d}$ symmetric a positive semidefinite and $N(x, \cdot)$ satisfies
\begin{align}
\int_{\mathbb{R}^d- \ll 0 \rr} \l |y|^2 \wedge 1 \r N(x, dy) < \infty,
\end{align}
while the non-negative bounded function $\chi$ is  a truncation function such that $0 \leq \chi (s) \leq 1-\kappa \l s \wedge 1 \r$ for some $\kappa >0$ and $s \chi (s)$ remains bounded. It is well known further that under these assumptions the operator $G$ has the form \cite[Corollary 2.23]{schillinglevy}
\begin{align}
Gu(x)  \, = \, -q(x, D) u(x) \, := \, - \int_{\mathbb{R}^d}  e^{i x \cdot \xi} q(x, \xi) \widehat{u} (\xi) d\xi
\label{symbg}
\end{align}
where $q(x, \xi)$ is a continuous negative definite function with representation
\begin{align}
q(x, \xi) \, = \,& q(x,0) -il(x) \cdot 	\xi + \frac{1}{2} \xi \cdot Q(x) \xi  \\& +\int_{\mathbb{R}^d - \ll 0 \rr} \l 1-e^{iy \cdot \xi} + i\xi \cdot y \chi (|y|)  \r N(x, dy),
\label{qx}
\end{align}
and 
\begin{align}
\widehat{u}(\xi) \, := \, (2\pi)^{-d} \int_{\mathbb{R}^d} e^{-ix \cdot \xi} u(x) dx.
\end{align}
We will further assume that $q(x,0) = 0$ and that $q$ has bounded coefficients in the sense of \cite[eq. (2.33)]{schillinglevy}, i.e., 
\begin{align}
\sup_{x \in \mathbb{R}^d} |q(x,0)|+ \sup_{x \in \mathbb{R}^d} |l(x)| + \sup_{x \in \mathbb{R}^d}|Q(x)| + \sup_{x \in \mathbb{R}^d} \int_{\mathbb{R}^d} \l |y|^2 \wedge 1 \r N(x,dy) < \infty
\label{boundcoff}
\end{align}
and hence we know (e.g \cite[Theorem 2.33]{schillinglevy}) that $T_t$ is conservative and $x \mapsto q(x, \xi)$ is a continuous function.

With the forthcoming result we characterize the generator of the couple process $\l M_t, \sigma_t \r$.
\begin{prop}
\label{propgen}
Assume that the strong Markov processes $M_t$ and $\l M_t, \sigma_t \r$ are Feller processes associated with the semigroups of operators $T_t$ and $P_t$ as above. Let $A$ be the generator of $P_t$ and assume that $C_c^\infty \l \mathbb{R}^d \r \subset \text{Dom}(G)$ as well as $C_c^\infty (\mathbb{R}^d \times [0, \infty)) \subset \text{Dom}(A)$ and so $G$ and $A$ are pseudo-differential operators; let $q(x, D)$ defined as in \eqref{symbg} be the symbol of $G$. Assume $q(x,0) = 0$ in \eqref{qx} and that $q$ has bounded coefficients in the sense of \eqref{boundcoff}. Further let $x \mapsto f(\lambda, x)$ be continuous and such that
\begin{align}
\sup_{x \in \mathbb{R}^d} \int_0^\infty (s \wedge 1) \nu(x, ds) < \infty.
\label{boundlev}
\end{align}
Then we have that the Feller process $(M_t, \sigma_t)$ is generated by $\l A, \text{Dom}(A) \r$ where $A$ has the form
\begin{align}
(Ah)(x,z) \, = \, & (G h)(x,z) \, + \, \int_0^\infty \l h(x, z+w)-h(x,z) \r \nu(x, dw).
\label{reprgen}
\end{align}
\end{prop}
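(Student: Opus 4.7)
The plan is to verify \eqref{reprgen} on the core $C_c^\infty \l \mathbb{R}^d \times [0, \infty) \r \subset \text{Dom}(A)$. Since the integro-differential operator on the right-hand side of \eqref{reprgen} is evidently well-defined on this class, a uniform match between $(P_t h - h)/t$ and that operator as $t \to 0$, for every $h$ in the core, identifies $A$.

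Fix $h \in C_c^\infty \l \mathbb{R}^d \times [0, \infty) \r$ and decompose
\begin{align*}
P_t h(x,z) - h(x,z) \, = \, \mathds{E}^x \lbb h(M_t, z + \sigma_t) - h(M_t, z) \rbb + \mathds{E}^x \lbb h(M_t, z) - h(x, z) \rbb
\end{align*}
into two pieces $I_1(t,x,z)$ and $I_2(t,x,z)$. For $I_2$, since $h(\cdot, z) \in C_c^\infty \l \mathbb{R}^d \r \subset \text{Dom}(G)$ for each $z$, the quotient $I_2/t = \l (T_t h(\cdot, z))(x) - h(x, z) \r/t$ converges to $(Gh)(x,z)$, and the compact $z$-support of $h$ combined with the bounded-coefficient hypothesis \eqref{boundcoff} upgrades this to uniform convergence in $(x,z)$.

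For $I_1$, I would condition on $\mathcal{F}_t^M := \sigma(M_s : s \leq t)$. By \eqref{eq:sigma}, conditionally $\sigma_t$ is infinitely divisible with Laplace exponent
\begin{align*}
\Psi_t(\lambda) \, := \, \int_0^t f(\lambda, M_s)\, ds \, = \, \int_0^\infty \l 1 - e^{-\lambda w} \r \mu^{(t)}(dw), \qquad \mu^{(t)}(dw) \, := \, \int_0^t \nu(dw, M_s)\, ds.
\end{align*}
For the test function $g_z(w) := h(M_t, z+w) - h(M_t, z)$, the smoothness and compact support of $h$ yield $g_z(0)=0$ and $|g_z(w)| \leq C(h)\, \lbrb{w \wedge 1}$. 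A short-time expansion of the infinitely divisible marginal gives
\begin{align*}
\mathds{E} \lbb g_z(\sigma_t) \, \big| \, \mathcal{F}_t^M \rbb \, = \, \int_0^\infty g_z(w)\, \mu^{(t)}(dw) + r_t,
\end{align*}
where the remainder $r_t$ is a second-order term that is $o(t)$ thanks to \eqref{boundlev}. Dividing by $t$, using right-continuity of $M$ at $0$, the continuity of $x \mapsto f(\lambda, x)$ (hence of the action of $\nu(\cdot, x)$ on $g_z$), and Fubini's theorem, one obtains $I_1(t,x,z)/t \to \int_0^\infty \l h(x, z+w) - h(x, z) \r \nu(dw, x)$ pointwise.

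The main technical obstacle is upgrading these pointwise limits to sup-norm convergence on $\mathbb{R}^d \times [0, \infty)$. The bounded-coefficient hypothesis \eqref{boundcoff} delivers a uniform difference-quotient estimate for $T_t u - u$ with $u \in C_c^\infty$, thereby making $I_2/t$ converge uniformly; the uniform integrability condition \eqref{boundlev} yields $\sup_{x \in \mathbb{R}^d} \int_0^\infty (w \wedge 1)\, \nu(dw, x) < \infty$ and then sustains a uniform dominated-convergence argument for $I_1/t$, since the $z$-support of $h$ is compact and the Lipschitz constant of $g_z$ is controlled by $h$ alone. Once the uniform limit is in hand it must equal $Ah$ on the core, yielding \eqref{reprgen}.
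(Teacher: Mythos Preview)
Your approach is correct in spirit but takes a genuinely different route from the paper's. The paper works entirely at the level of symbols: it first checks by Fourier inversion that the operator on the right of \eqref{reprgen} is a pseudo-differential operator with symbol $q(x,\xi_1)+f(-i\xi_2,x)$, and then computes the \emph{probabilistic symbol} of the pair process directly, i.e.\ the limit $t^{-1}\bigl(\mathds{E}^x e^{i\xi_1(M_t-x)+i\xi_2\sigma_t}-1\bigr)$, by conditioning on the path of $M$ and using \eqref{laplexpdef}. The identification is then completed by invoking \cite[Theorem~2.36]{schillinglevy}, which says that for Feller processes with bounded coefficients the probabilistic symbol coincides with the analytic one. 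Your argument bypasses the symbol machinery entirely and works with the difference quotient $(P_th-h)/t$ on test functions, splitting off the $M$-increment from the $\sigma$-increment.

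Two remarks on your execution. First, the ``main technical obstacle'' you flag --- upgrading the pointwise limits of $I_1/t$ and $I_2/t$ to uniform convergence --- is in fact not needed. By hypothesis $h\in C_c^\infty\subset\text{Dom}(A)$, so $(P_th-h)/t\to Ah$ already converges uniformly; it therefore suffices to identify the pointwise limit of each piece, and uniqueness of limits gives \eqref{reprgen}. This simplifies your argument considerably. Second, the ``short-time expansion of the infinitely divisible marginal'' you invoke for $I_1$ is correct but deserves one more line: since $\mu^{(t)}$ satisfies $\int_0^\infty(w\wedge1)\,\mu^{(t)}(dw)\le t\sup_x\int_0^\infty(w\wedge1)\,\nu(dw,x)$ by \eqref{boundlev}, the conditional law of $\sigma_t$ is that of a subordinator at a time of order $t$, uniformly in the path of $M$, and the standard first-order expansion (two or more jumps contribute $O(t^2)$) goes through uniformly. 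The paper sidesteps this by doing the analogous expansion on the Laplace transform $e^{-\int_0^t f(-i\xi_2,M_w)\,dw}\approx 1-tf(-i\xi_2,x)$, which is algebraically cleaner.
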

\begin{proof}
Observe that, for $h \in C_c^\infty \l \mathbb{R}^d \times [0, \infty) \r$, we have
\begin{align}
&\int_{\mathbb{R}^{d+1}} e^{i \xi_1 x+i\xi_2 z}\l q(x, \xi_1) - f(-i\xi_2,x) \r \widehat{h}(\xi_1, \xi_2 ) d\xi_1 d\xi_2\notag \\
 = \, &\int_{\mathbb{R}^{d+1}} e^{i \xi_1 x+i\xi_2z} q(x, \xi_1) \widehat{h}(\xi_1, \xi_2)\, d\xi_1 d\xi_2 \notag \\ &- \int_{\mathbb{R}^{d+1}} e^{i \xi_1 x+i\xi_2z}\int_0^\infty \l 1-e^{i\xi_2 w} \r \nu(x, dw)  \, \widehat{h}(\xi_1, \xi_2)\, d\xi_1 d\xi_2 \notag \\
 = \, & (Gh)(x,z) + \int_0^\infty \l h(x, z+w)-h(x,z) \r \nu(dw, x) \notag \\
 = \, & (Ah)(x,z)
\end{align}
and hence the operator $A$ is a pseudo-differential operator with simbol $q(x,D)-f(D,x)$. Now note that in view of \eqref{boundlev} we have that $q(x, \cdot)-f( \cdot,x)$ has bounded coefficients in the sense of \cite[eq. (2.33)]{schillinglevy}. Further since we have that $x \mapsto q(x, \cdot) $ and $x \mapsto f(\cdot, x)$ are continuous we can apply \cite[Theorem 2.36]{schillinglevy} to compute the symbol of the process $(M_t, \sigma_t)$ and we show that it is equal to $q(x,D)-f(D, x)$. We prove that
\begin{align}
t^{-1}\l \mathds{E}^xe^{-i\xi_1x-i\xi_2z}e^{i\xi_1M_t+i\xi_2\sigma^z_t}-1 \r \to  q(x,D) - f(D,x).
\end{align}
We have that
\begin{align}
& \l \mathds{E}^xe^{-i\xi_1x-i\xi_2z}e^{i\xi_1M_t+i\xi_2\sigma^z_t}-1\r \notag \\
 = \, & \l \mathds{E}^x e^{-i\xi_1x} e^{i\xi_1M_t+i\xi_2\sigma_t}-1\r \notag \\
 = \, & \l \int_{\mathbb{R}^d \times [0, \infty)} e^{i\xi_1y}  e^{i\xi_2w}  P^x \l \sigma_t \in dw \mid M_t = y \r \, P^x (M_t \in dy) -1 \r \notag \\
 = \, & \bigg( e^{-i \xi_1 x} \int_{\mathbb{R}^d \times [0, \infty)} e^{i\xi_1 y}  e^{i\xi_2 w}\int P^x \l \sigma_t \in dw \mid M_t = y, \l M_s\r_{s<t} = \omega_s \r \notag \\ & \times  P^x \l \l M_s\r_{s<t} \in d\omega_s \mid M_t=y \r P^x \l M_t \in dy \r -1 \bigg) \notag \\
 = \, &  e^{-i\xi_1x}\mathds{E}^x e^{-\int_0^t f(-i\xi_2, M_w) dw} e^{i\xi_1M_t}-1 \notag \\
 \stackrel{t \to 0+}{\sim} & e^{-i\xi_1x}\mathds{E}^x \l 1- tf(-i\xi_2, M_t) \r e^{i\xi_1 M_t}
 \label{charact}
 \end{align}
 where in the last step we have used \eqref{laplexpdef}.
 Now note that by \eqref{charact} we can write, as $t \to 0+$,
 \begin{align}
 &\lim_{t \downarrow 0} t^{-1}\l \mathds{E}^xe^{-i\xi_1x-i\xi_2z}e^{i\xi_1M_t+i\xi_2\sigma^z_t}-1\r \notag \\
= \, & \lim_{t \downarrow 0}  t^{-1} \int_{\mathbb{R}^d}\l e^{i\xi_1y-i\xi_1x} -1 \r P^x \l M_t \in dy \r \notag \\ &- e^{-i\xi_1x}\int_{\mathbb{R}^d} e^{i\xi_1y}  f(-i\xi_2, y)\, P^x \l M_t \in dy \r  \notag \\
= \,  &   q(x, \xi_1) - f(-i\xi_2, x),
\end{align}
where in the last step we have  used the fact that $\mathds{E}^xu(M_t) \to u(x)$ as $t \to 0$ for any continuous bounded function $u$ and that $q(x,D)$ is the symbol of the process $M_t$.
\end{proof}

In the forthcoming results it will be useful to know the following result.
\begin{lem}
\label{nosimjump}
The processes $M_t$ and $\sigma_t$ don't jump simultaneously, a.s.
\end{lem}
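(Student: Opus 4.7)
The plan is to leverage the conditional \LL representation \eqref{eq:sigma} of $\sigma$ given the whole trajectory of $M$. Fix the $\sigma$-algebra $\mathcal{G} = \sigma(M_w : w \geq 0)$ and read \eqref{eq:sigma} as identifying, for almost every realization of $M$, the conditional law of $\sigma$ as that of a time-inhomogeneous subordinator with cumulant $y \mapsto \int_0^y f(\lambda, M_w)\,dw$. Since this cumulant is absolutely continuous in $y$, the conditional law of $\sigma$ is stochastically continuous: for every deterministic $t_0 \geq 0$,
\begin{align*}
P\bigl( \Delta \sigma_{t_0} > 0 \mid \mathcal{G} \bigr) \, = \, 0 \quad \text{a.s.}
\end{align*}

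Next, since $M$ is a Hunt process and therefore has càdlàg trajectories, the set of its jump times $\mathcal{J}_M := \lbcurly y \geq 0 : M_y \neq M_{y-} \rbcurly$ is a.s. at most countable and is $\mathcal{G}$-measurable; so conditional on $\mathcal{G}$, the set $\mathcal{J}_M$ is deterministic and countable. A countable subadditivity argument applied to the above stochastic-continuity estimate then gives
\begin{align*}
P\bigl( \exists y \in \mathcal{J}_M : \Delta \sigma_y > 0 \mid \mathcal{G} \bigr) \, \leq \, \sum_{y \in \mathcal{J}_M} P(\Delta \sigma_y > 0 \mid \mathcal{G}) \, = \, 0,
\end{align*}
and taking expectations yields the claim.

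The main technical point is to justify rigorously that \eqref{eq:sigma} identifies a regular conditional distribution of $\sigma$ given $\mathcal{G}$ under which $\sigma$ is a genuine time-inhomogeneous subordinator. Once this is granted, standard Poisson-random-measure arguments show that its jump measure has intensity $\nu(ds, M_w)\,dw$ on $(0,\infty) \times (0,\infty)$, which is atomless in the $w$-variable and hence assigns zero mass to the countable strip $\mathcal{J}_M \times (0, \infty)$, giving an alternative route to the same conclusion. Both the conditional disintegration and the absolute continuity of the cumulant are already available in the Markov additive framework of \cite{cinlar2, cinlarlevy} recalled in Section \ref{sec2}, so no new input beyond the construction of $(M,\sigma)$ is needed.
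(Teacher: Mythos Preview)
Your argument is correct and makes explicit what the paper obtains by a one-line citation. The paper simply invokes \cite[(1.6)d and Remark (2.8)]{cinlarlevy}: in Cinlar's decomposition $\sigma = A + \sigma^f + \sigma^c + \sigma^d$ recalled in Section~\ref{map}, the component $\sigma^f$ is by definition the piece whose jumps are ``fixed by $M$'' (i.e., occur at jump times of $M$), while $\sigma^d$ is stochastically continuous; since the standing assumption in Section~\ref{defass} is precisely $\sigma = \sigma^d$, the lemma is an immediate structural consequence.

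Your route unpacks this: you use the conditional Laplace representation \eqref{eq:sigma} (itself a consequence of $\sigma=\sigma^d$) to show directly that, given the path of $M$, $\sigma$ has no fixed discontinuities, and then exploit the countability of $\mathcal{J}_M$. Two small remarks. First, \eqref{eq:sigma} is literally a statement conditional on $\sigma(M_w:w\le y)$ rather than on your $\mathcal{G}=\sigma(M_w:w\ge 0)$; the passage to $\mathcal{G}$ is harmless but does use the Markov additive property (item~\ref{theta} and item~5 in Section~\ref{map}) to see that future values of $M$ carry no extra information about $\sigma_y$. Second, the step ``same conditional Laplace transform $\Rightarrow$ $\sigma(t_0-)=\sigma(t_0)$ a.s.'' is justified because $\sigma(t_0-)\le\sigma(t_0)$ pathwise and two ordered random variables with the same law must coincide a.s. Both points are routine, and you rightly flag that the existence of a regular conditional law as an additive process is the only place where one must again lean on the Cinlar framework---so in the end both proofs rest on the same foundation, yours being the more transparent.
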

\begin{proof}
This is a consequence of \cite[(1.6)d and Remark (2.8)]{cinlarlevy}.
\end{proof}
\begin{os}
\label{remrepra}
Note that by Proposition \ref{propgen} we have that $A$ is a pseudo-differential operator whose representation is of the form \eqref{reprg} on $\mathbb{R}^{d+1}$. Precisely we have that
\begin{align}
&(Ah) (x,z) \notag \\ = \, &-c(x) h(x,z)+l(x) \cdot \nabla_x h(x,z) + \frac{1}{2} \text{div} Q(x) \nabla_x h(x,z) \notag \\
& + \int_{\mathbb{R}^{d+1}} \l h(x+y,z+w)-h(x,z)- \nabla_x h (x,z) \cdot y  \chi (|y|)  \r K(x,z,dy,dw)
\label{repra}
\end{align} 
where the jump kernel $K(x,z,dy,dw) := \delta_0(dw)N(x, dy)+\delta_0(dy)\nu(x,dw)$ is supported on the coordinate axes $\l \mathbb{R}^d \times \ll 0 \rr \r \times \l \ll 0 \rr \times [0, \infty) \r$, since the processes $M_t$ and $\sigma_t$ don't jump simultaneously, a.s., by Lemma \ref{nosimjump}. Hence
\begin{align}
&(Ah) (x,z) \notag \\ = \, &-c(x) h(x,z)+l(x) \cdot \nabla_x h(x,z) + \frac{1}{2} \text{div} Q(x) \nabla_x h(x,z) \notag \\
& + \int_{\mathbb{R}^{d}} \l h(x+y,z)-h(x,z)- \nabla_x h (x,z) \cdot y  \chi (|y|)  \r N(x,dy) \notag \\ & + \int_0^\infty \l h(x,z+w)-h(x,z) \r \nu(x, dw).
\end{align}
\end{os}
Now we obtain some properties of the operator $\Pi_t$ and the corresponding mapping $t \mapsto \Pi_tu$, which will be used in the subsequent results.
The following auxiliary lemmas characterize the strong continuity with respect to $\left\| \cdot \right\|$ of $t \mapsto \Pi_tu$ for $u \in C_0 \l \mathbb{R}^d \r$.
\begin{lem}
\label{lem:ontinuity}
Suppose that $\bar{u}(s):=\inf_{x\in \Rb^d}\bar{\nu}(s,x)$ is the tail of a \LL measure of some subordinator of infinite activity. Then, for any $\delta>0$, $s, t \geq 0$, it is true that
\begin{align}
\lim_{s \to t} \sup_{x\in \Rb^d} P^x \l \left| L_t - L_s \right| > \delta \r \, = \, 0
\end{align}
\end{lem}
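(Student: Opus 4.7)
The plan is to reduce the uniform continuity of $t \mapsto L_t$ to a uniform small-time estimate on increments of $\sigma$, exploiting the Laplace exponent representation \eqref{eq:sigma} together with the uniform lower bound $\bar{\nu}(s,x) \geq \bar{u}(s)$. First I would fix $\delta > 0$ and, by symmetry, restrict to $s < t$. Since $z + \sigma(L_s) \geq s$ by right-continuity of $\sigma$, the pathwise inclusion
$$\{L_t - L_s > \delta\} \,\subset\, \{z + \sigma(L_s + \delta) \leq t\} \,\subset\, \{\sigma(L_s + \delta) - \sigma(L_s) \leq t - s\}$$
holds. Next, observing that $L_s$ is an $\mathcal{F}_y$-stopping time (by right-continuity of $\sigma$ and of the filtration), and using property \ref{theta} of Section \ref{map} together with the strong Markov identity \eqref{stmp}, I would write
$$P^x\bigl(\sigma(L_s + \delta) - \sigma(L_s) \leq t - s\bigr) \,=\, \mathds{E}^x\bigl[P^{M(L_s)}(\sigma(\delta) \leq t - s)\bigr],$$
which reduces the problem to bounding $\sup_{y \in \mathbb{R}^d} P^y(\sigma(\delta) \leq r)$ as $r := t - s \to 0$.

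The uniform bound is then produced by a Chernoff-type argument. For $\lambda > 0$, Markov's inequality combined with \eqref{eq:sigma} gives
$$P^y(\sigma(\delta) \leq r) \,\leq\, e^{\lambda r}\,\mathds{E}^y e^{-\lambda \sigma(\delta)} \,=\, e^{\lambda r}\,\mathds{E}^y e^{-\int_0^\delta f(\lambda, M_w)\,dw},$$
and by Fubini the Bernstein function admits the representation $f(\lambda, x) = \lambda \int_0^\infty e^{-\lambda u}\bar{\nu}(u,x)\,du$, so the hypothesis yields $f(\lambda, x) \geq \lambda \int_0^\infty e^{-\lambda u} \bar{u}(u)\,du =: f^u(\lambda)$ uniformly in $x$. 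Consequently
$$\sup_{y \in \mathbb{R}^d} P^y(\sigma(\delta) \leq r) \,\leq\, e^{\lambda r - \delta f^u(\lambda)}.$$
To conclude I would pick $\lambda = 1/\sqrt{r}$, so that $\lambda r = \sqrt{r} \to 0$, while the infinite-activity hypothesis on $\bar{u}$ forces $f^u(\lambda) \to \infty$ as $\lambda \to \infty$ (apply Fatou's lemma to the representation $f^u(\lambda) = \int_0^\infty e^{-v}\bar{u}(v/\lambda)\,dv$, using $\bar{u}(0+) = +\infty$). Hence the exponent tends to $-\infty$, uniformly in $x$.

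The main obstacle, and the point where the hypothesis on $\bar{u}$ is indispensable, is producing a Laplace bound that does not degenerate as the starting point varies: without a uniform lower bound on the family of Bernstein functions $\{f(\cdot,x)\}_{x \in \mathbb{R}^d}$ coming from an infinite-activity tail, the increments $\sigma(\delta)$ under $P^y$ could concentrate arbitrarily close to $0$ for some $y$, and the uniform continuity of $L$ would fail even if, for each individual $x$, the map $t \mapsto L_t$ is continuous a.s.
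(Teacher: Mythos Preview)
Your proof is correct and takes a genuinely different route from the paper's. Both arguments reduce, via the strong Markov property at the stopping time $L_s$, to showing that $\sup_{y\in\Rb^d} P^y(\sigma_\delta \leq r)\to 0$ as $r\downarrow 0$; the difference lies in how this uniform bound is obtained. The paper builds a \emph{pathwise} stochastic minorant: it introduces a dyadic partition of $(0,\infty)$ and, conditionally on the trajectory of $M$, thins each jump of $\sigma$ by an independent Bernoulli variable with parameter $u\bigl([k2^{-n},(k+1)2^{-n})\bigr)\big/\nu\bigl([k2^{-n},(k+1)2^{-n}),M_{s-}\bigr)$, producing processes $\underline{\sigma}^{(n)}$ that are dominated by $\sigma$ and converge in law to the subordinator $\underline{\sigma}$ with L\'evy tail $\bar u$. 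Portmanteau then yields $\sup_y P^y(\sigma_\delta\leq r)\leq P(\underline{\sigma}_\delta\leq r)$, and infinite activity of $\underline{\sigma}$ finishes the argument.

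Your Chernoff approach is more direct and more elementary: it bypasses the coupling entirely by exploiting the conditional Laplace representation \eqref{eq:sigma} and the pointwise inequality $f(\lambda,x)\geq f^u(\lambda)$, which follows immediately from $\bar\nu(\cdot,x)\geq \bar u(\cdot)$ and the identity $f(\lambda,x)=\lambda\int_0^\infty e^{-\lambda u}\bar\nu(u,x)\,du$. The resulting bound $\sup_y P^y(\sigma_\delta\leq r)\leq \exp\bigl(\lambda r-\delta f^u(\lambda)\bigr)$ with $\lambda=r^{-1/2}$ is clean and quantitative. What the paper's construction buys is a concrete coupling object $\underline{\sigma}$ that it reuses verbatim in Proposition~\ref{conv}, where a bound uniform over an approximating family $\{\sigma^n\}_{n\geq 1}$ is needed; your analytic argument would adapt there just as easily (replace $\bar u$ by $\mathfrak{u}(s)=\inf_n\bar u^n(s)$), so this is a matter of taste rather than necessity.
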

\begin{proof}
	Our first aim is to construct path-wise a proper subordinator $\underline{\sigma}$ such that regardless of the initial position $x$ of $M$, stochastically $\sigma_s\geq \underline{\sigma}_s$ for all $s\geq 0$. This  can be done as follows. Consider the dyadic decomposition of $\Rb^+$, that is $\lbrb{\lbbrb{\frac{k}{2^n},\frac{k+1}{2^n}}}_{k\geq 0}$, and from $\sigma_t=\sum_{s\leq t}\Delta \sigma_s$ define for any fixed $t>0$ and the running trajectory of $M$
	\begin{equation}\label{eq:lower}
		\underline{\sigma}^{(n)}_t=\sum_{s\leq t}\sum_{k}\frac{k}{2^n}\ind{\Delta \sigma_s\in \lbbrb{\frac{k}{2^n},\frac{k+1}{2^n}}}\mathrm{B}_{M_{s-},\frac{k}{2^n}}, 
	\end{equation}
	where $\mathrm{B}_{M_{s-},\frac{k}{2^n}}$ is a Bernoulli random variable with parameter $p_{M_{s-},\frac{k}{2^n}}$ such that 
	\[p_{M_{s-},\frac{k}{2^n}}=\frac{u\lbrb{\lbbrb{\frac{k}{2^n},\frac{k+1}{2^n}}}}{\nu\lbrb{\lbbrb{\frac{k}{2^n},\frac{k+1}{2^n}}, M_{s-}}}\leq 1\]
	 provided $\nu\lbrb{\lbbrb{\frac{k}{2^n},\frac{k+1}{2^n}}, M_{s-}}>0$ and zero otherwise. Conditionally on the path of $M$ all Bernoulli random variables are independent of each other. Note that we have used that $\bar{u}(s)=\inf_{x\in \Rb^d}\bar{\nu}(x,s)$ with $\nu$ and $u$ being the respective measures behind the respective tails. We note that for any $t>0$, $n\geq 1$ conditionally on $\lbrb{M_s=w_s}_{s\leq t}$ 
	 \begin{equation}\label{eq:ineqS}
	 	\underline{\sigma}^{(n)}_t\leq \sigma_t.
	 \end{equation}
	 Moreover, 
	 \begin{equation*}
	 \begin{split}
	 &	\Eds{e^{-\lambda	\underline{\sigma}^{(n)}_t }\Big|M_s=w_s,s\leq t}=e^{-\sum_{k}\int_{0}^t\lbrb{1-e^{-\lambda \frac{k}{2^n}}}\frac{u\lbrb{\lbbrb{\frac{k}{2^n},\frac{k+1}{2^n}}}}{\nu\lbrb{M_{s-},\lbbrb{\frac{k}{2^n},\frac{k+1}{2^n}}}}\nu\lbrb{M_{s-},\lbbrb{\frac{k}{2^n},\frac{k+1}{2^n}}}ds}\\
	 &= e^{-t\sum_{k}\lbrb{1-e^{-\lambda \frac{k}{2^n}}}u\lbrb{\lbbrb{\frac{k}{2^n},\frac{k+1}{2^n}}}}   
	 \end{split}
	 \end{equation*}
	 and we see that $\underline{\sigma}^{(n)}_t$ are Compound Poisson processes and clearly 
	 \begin{equation}\label{eq:limitS}
	 \begin{split}
	 &\limi{n}\underline{\sigma}^{(n)}_t\stackrel{d}{=}\underline{\sigma}_t,
	 \end{split}
	 \end{equation}  
	 where $\underline{\sigma}$ is a subordinator with
	 \begin{equation}\label{eq:LKS}
	 \begin{split}
	 &\Eds{e^{-\lambda\underline{\sigma}_1 }}=e^{-\int_{0}^{\infty}\lbrb{1-e^{-\lambda y}}u(dy)}.
	 \end{split}
	 \end{equation}
	 From now on fix $t>0$ and $x\in \Rb^d$. We consider $0<a_l\uparrow t$. Let $\epsilon>0$. Then
	 \begin{equation*}
	 \begin{split}
	 \Pb^x\lbrb{L_t-L_{a_l}>\epsilon}&=\Pb^x\lbrb{L_t-L_{a_l}>\epsilon;\sigma_{L_{a_l}}\leq t}\\
	 &=\int_{y\in \Rb^d}\int_{a_l}^t\Pb^x\lbrb{L_t-L_{a_l}>\epsilon,\sigma_{L_{a_l}} \in dv;  M_{L_{a_l}}\in dy} \\
	 &= \int_{y\in \Rb^d}\int_{a_l}^t\Pb^{y}\lbrb{L_{t-v}>\epsilon}\Pb^x\lbrb{\sigma_{L_{a_l}} \in dv;  M_{L_{a_l}}\in dy}\\
	 &\leq\sup_{y\in \Rb^d}\Pb^y\lbrb{L_{t-a_l}>\epsilon}=\sup_{y\in \Rb^d}\Pb^y\lbrb{\sigma_{\epsilon}\leq t-a_l}.
	 \end{split}
	 \end{equation*}
	 From \eqref{eq:ineqS} we have that for any $n\geq 1$
	 \begin{equation*}
	 \begin{split}
	 &	\Pb^x\lbrb{L_t-L_{a_l}>\epsilon}\leq 	\Pb\lbrb{\sigma^{(n)}_{\epsilon}\leq t-a_l}    
	 \end{split}
	 \end{equation*}
	 as $\underline{\sigma}^{(n)}$ is independent of the initial position of the Markov process $M$. By Portmanteau's theorem we deduct that 
	 \begin{equation*}
	 \begin{split}
	 &	\Pb^x\lbrb{L_t-L_{a_l}>\epsilon}\leq 	\limsupi{n}\Pb\lbrb{\underline{\sigma}^{(n)}_{\epsilon}\leq t-a_l}\leq \Pbb{\underline{\sigma}_\epsilon\leq t-a_l}.    
	 \end{split}
	 \end{equation*}
	 Since $\underline{\sigma}$ is of infinite activity, that is $\bar{u}(0)=\infty$, we obtain that
	 \begin{equation*}
	 \begin{split}
	 &		  \limsupi{l} \sup_{x \in \Rb^d}\Pb^x\lbrb{L_t-L_{a_l}>\epsilon}\leq \limi{l}\Pbb{\underline{\sigma}_\epsilon\leq t-a_l}=0.
	 \end{split}
	 \end{equation*}
	 The other scenario when $a_l\downarrow t$ is proved in the same fashion using that $\curly{L_t\neq L_{a_l}}=\curly{\sigma_{L_t}\in\lbbrb{t,a_l}}$.
\end{proof}
\begin{lem}
\label{unifcont}
Assume that the strong Markov process $M(t)$ is a Feller process and define $\overline{M}_t:= \sup_{0 \leq s \leq t} |M_s-M_0|$. Suppose that for any $\delta>0$
\begin{align}
\lim_{t \to 0 }\sup_x P^x \l  \overline{M}_t>\delta  \r = 0.
\label{hypdist}
\end{align}
Then, under the assumption of Lemma \ref{lem:ontinuity} we have that the mapping
\begin{align}
[0, \infty) \ni t \mapsto \Pi_t u (x) \,: = \, \mathds{E}^x u(M(L(t)), \text{ for } u \in C_0 \l \mathbb{R}^d \r
\end{align}
is uniformly continuous (strongly continuous with respect to $\left\| \cdot \right\|)$.
\end{lem}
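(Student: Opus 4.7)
The plan is to use the uniform continuity of $u\in C_0(\Rb^d)$ to reduce the claim to a uniform-in-$x$ smallness estimate for the displacement $|M(L(t))-M(L(s))|$, and then, via the strong Markov property at the $\mathcal{F}_t$-stopping time $L(s\wedge t)$, to split this displacement into the length of the time interval $L(t)-L(s)$ (controlled uniformly in $x$ by Lemma \ref{lem:ontinuity}) and an oscillation of $M$ over a short deterministic window (controlled uniformly in the starting point by assumption \eqref{hypdist}).

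Concretely, I would fix $\varepsilon>0$ and, by uniform continuity of $u$, choose $\delta>0$ such that $|u(y)-u(y')|<\varepsilon$ whenever $|y-y'|\leq\delta$, so that
\[
\left|\Pi_tu(x)-\Pi_su(x)\right|\leq \varepsilon+2\left\|u\right\|\Pb^x\lbrb{|M(L(t))-M(L(s))|>\delta}.
\]
Setting $\tau:=s\wedge t$ and $\tau':=s\vee t$, the monotonicity of $L$ together with the pathwise bound $|M(L(\tau'))-M(L(\tau))|\leq \overline{M}_{L(\tau')-L(\tau)}\circ\theta_{L(\tau)}$ yields, for every $\eta>0$, the inclusion
\[
\lbcurly|M(L(t))-M(L(s))|>\delta\rbcurly\subset \lbcurly L(\tau')-L(\tau)>\eta\rbcurly\cup \lbcurly \overline{M}_\eta\circ\theta_{L(\tau)}>\delta\rbcurly.
\]
Since $\{L(\tau)\leq u\}=\{z+\sigma_u>\tau\}\in\mathcal{F}_u$, the random time $L(\tau)$ is an $\mathcal{F}_t$-stopping time; as $\overline{M}_\eta$ is a functional of $M$ alone, the strong Markov property \eqref{stmp} for $(M,\sigma)$ delivers
\[
\Pb^x\lbrb{\overline{M}_\eta\circ\theta_{L(\tau)}>\delta}=\mathds{E}^x\Pb^{M(L(\tau))}\lbrb{\overline{M}_\eta>\delta}\leq \sup_{y\in\Rb^d}\Pb^y\lbrb{\overline{M}_\eta>\delta}.
\]
I would then pick $\eta=\eta(\varepsilon)$ small enough, using \eqref{hypdist}, to make the last supremum smaller than $\varepsilon/(2\|u\|)$; with this $\eta$ frozen, Lemma \ref{lem:ontinuity} gives $\sup_x\Pb^x(L(\tau')-L(\tau)>\eta)\to 0$ as $s\to t$, completing the argument.

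The only delicate point is the strong Markov step: because $L(\tau)$ is built from $\sigma$ it is not a stopping time for the natural filtration of $M$ alone, so one cannot appeal to the Feller property of $M$ directly but must invoke \eqref{stmp} for the coupled process $(M,\sigma)$. Once it is noted that $\overline{M}_\eta$ does not depend on $\sigma$ and that under $\Pb^y$ the $M$-marginal of $(M,\sigma)$ has the Feller law of $M$ started at $y$, the quantity $\sup_y\Pb^y(\overline{M}_\eta>\delta)$ becomes an honest uniform-in-start-point oscillation estimate to which hypothesis \eqref{hypdist} directly applies, and the remaining bookkeeping is routine.
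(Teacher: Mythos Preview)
Your argument is correct and follows essentially the same route as the paper: both reduce to $\varepsilon+2\|u\|\,P^x(|M(L_t)-M(L_s)|>\delta)$ via uniform continuity of $u$, then split on $\{L_{\tau'}-L_\tau>\eta\}$ versus its complement, apply the strong Markov property of $(M,\sigma)$ at the stopping time $L_\tau$ to bound the oscillation piece by $\sup_y P^y(\overline{M}_\eta>\delta)$, and finish with Lemma~\ref{lem:ontinuity} and hypothesis~\eqref{hypdist}. The only cosmetic difference is the order of limits (you fix $\eta$ first and then let $s\to t$, whereas the paper sends $a_n\to t$ first and then $\delta^\star\to 0$), which is immaterial.
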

\begin{proof}
Take an arbitrary sequence $a_n \uparrow t$. Recall that $C_0 \l \mathbb{R}^d \r$ functions are uniformly continuous and hence pick $u \in C_0 \l \mathbb{R}^d \r$ and fix $\epsilon>0$ such that one has $|u(x)-u(y)|<\epsilon$ whenever $|x-y|<\delta$.  Fix another arbitrary constant $\delta^\star>0$. We have that
\begin{align}
& \left| \int \l u(M_{L_t}) - u(M_{L_{a_n}}) \r  dP^x  \right|\notag \\  \leq \, &\int \left| u(M_{L_t}) - u(M_{L_{a_n}})  \right| dP^x \notag \\
= \, & \int_{\ll L_t = L_{a_n} \rr} \l u(M_{L_t}) - u(M_{L_{a_n}}) \r dP^x  + \int_{\ll L_t-L_{a_n} > \delta^\star\rr}   \left| u(M_{L_t}) - u(M_{L_{a_n}})  \right| dP^x \notag \\
 & + \int_{\ll 0< L_t-L_{a_n} < \delta^\star, |M_{L_t} -M_{L_{a_n}}|<\delta\rr}   \left| u(M_{L_t}) - u(M_{L_{a_n}})  \right| dP^x \notag \\ 
& + \int_{\ll 0< L_t-L_{a_n} < \delta^\star, |M_{L_t} -M_{L_{a_n}}|>\delta\rr}   \left| u(M_{L_t}) - u(M_{L_{a_n}})  \right| dP^x \notag \\
\leq \, & 2 \left\| u \right\|  P^x \l L_t-L_{a_n} > \delta^\star \r + \epsilon + 2 \left\| u \right\| P^x \l   L_t-L_{a_n} < \delta^\star, |M_{L_t} -M_{L_{a_n}}|>\delta \r.
\end{align}
Recall the action of the translation operator $\theta_t$ on $L_t$, i.e., for any stopping time $\tau$
\begin{align}
L_t \circ \theta_{\tau} \, = \, \inf \ll w \geq 0: \sigma_{w+\tau}-\sigma_\tau > t \rr
\end{align}
and use \eqref{stmp} to say that
\begin{align}
& P^x \l   |M_{L_t} -M_{L_{a_n}}|>\delta,  L_t-L_{a_n} < \delta^\star \r  \notag \\ = \, & P^x \l \left| \l M_{L_{t-\sigma_{L_{a_n}}}}-M_0 \r \circ \theta_{L_{a_n}} \right| > \delta , L_{t-\sigma_{L_{a_n}}} \circ \theta_{L_{a_n}} < \delta^\star  \r \notag \\
= \, & \mathds{E}^x P^{M_{L_{a_n}}} \l |M_{L_{t-\sigma_{L_{a_n}}}} - M_0 | > \delta, L_{t-\sigma_{L_{a_n}}}< \delta^\star \r \notag \\
\leq \,&  \sup_y P^y \l \overline{M}_{\delta^\star} > \delta \r.
\end{align}
Hence we have by Lemma \ref{lem:ontinuity} that
\begin{align}
\lim_n \sup_x | \mathds{E}^x \l u(M_{L_t}) - u(M_{L_{a_n}}) \r | \, \leq \, \epsilon  + 2 \left\| u \right\| \sup_x P^x \l \overline{M}_{\delta_\star} > \delta \r
\end{align}
Now let $\delta^\star \to 0$ and use \eqref{hypdist}. Since $\epsilon$ is arbitrary and by repeating the same argument for $a_n \downarrow t$ we get the result.
\end{proof}
In the following proposition we characterize function spaces on which we want that the linear operators $\Pi_t$ and $\mathpzc{R}_\lambda := \int_0^\infty e^{-\lambda t} \Pi_t \, dt$ act.
\begin{prop}
\label{propcc}
Under the assumptions of Proposition \ref{propgen} and Lemma \ref{unifcont} further suppose that for any $\lambda>0$ there exist two positive constants $c$ and $C$ such that $c \leq f(\lambda,x) \leq C$. Let $\Pi_tu (x) := \mathds{E}^x u(M(L(t)))$ and define
\begin{align}
\mathpzc{R}_\lambda u \, : = \, \int_0^\infty e^{-\lambda t} \Pi_t u \, dt.
\end{align}
Then we have that $\Pi_t: C_0 \l \mathbb{R}^d \r \mapsto C_0 \l \mathbb{R}^d \r $ and $\mathpzc{R}_\lambda: C_0 \l \mathbb{R}^d \r \mapsto \text{Dom}(G)$.
\end{prop}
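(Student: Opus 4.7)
The plan is to identify $\mathpzc{R}_\lambda u$ as the zero--resolvent of a Feynman--Kac perturbation of the Feller semigroup $T_t$ acting on $C_0(\Rb^d)$, and to conclude $\Pi_t u\in C_0$ by combining this with continuity of $\Pi_t u$ in $x$ (from the Feller property of $(M,\sigma)$) and a compactness argument in $t$.

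First I would derive a workable identity for $\mathpzc{R}_\lambda u$. Conditioning on the trajectory of $M$ and using $\ll L(t)\leq s\rr =\ll \sigma(s)>t\rr$ together with \eqref{eq:sigma}, one obtains $\int_0^\infty e^{-\lambda t}\Pb^x\lbrb{L(t)\leq s\mid M}\,dt=\lambda^{-1}(1-\Eb^x\lbb e^{-\lambda\sigma(s)}\mid M\rbb)$; differentiating in $s$ and applying Fubini yields
\begin{align*}
\mathpzc{R}_\lambda u(x) \,=\, \Eb^x\int_0^\infty u(M_s)\,\frac{f(\lambda,M_s)}{\lambda}\,e^{-\int_0^s f(\lambda,M_w)\,dw}\,ds.
\end{align*}

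Next I would recognize the right-hand side as a zero--resolvent on $M$. Introduce the Feynman--Kac semigroup $S_t^\lambda g(x):=\Eb^x\lbb g(M_t)\,e^{-\int_0^t f(\lambda,M_w)\,dw}\rbb$ on $C_0(\Rb^d)$. Since $x\mapsto f(\lambda,x)$ is continuous and bounded with $c\leq f(\lambda,\cdot)\leq C$, multiplication by $f(\lambda,\cdot)$ is a bounded operator on $C_0$, so by the standard bounded--perturbation theorem for Feller generators $(S^\lambda_t)$ is a strongly continuous Feller semigroup on $C_0(\Rb^d)$ with generator $(G-f(\lambda,\cdot)\cdot,\text{Dom}(G))$ and $\left\|S^\lambda_t g\right\|\leq e^{-ct}\left\| g\right\|$. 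The zero--resolvent $V^0_\lambda g:=\int_0^\infty S^\lambda_t g\,dt$ is therefore a bounded operator $C_0\to\text{Dom}(G)$, and the identity above reads $\mathpzc{R}_\lambda u=V^0_\lambda\lbrb{u\,f(\lambda,\cdot)/\lambda}$. Since $u\in C_0$ and $f(\lambda,\cdot)$ is continuous and bounded, the argument $u\,f(\lambda,\cdot)/\lambda$ belongs to $C_0$, whence $\mathpzc{R}_\lambda u\in\text{Dom}(G)$, which settles the second claim.

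For $\Pi_t:C_0\to C_0$ I would prove continuity in $x$ and decay at infinity separately. Continuity in $x$ uses the Feller property of $(M,\sigma)$ from Proposition \ref{propgen}: the law $\Pb^{(x,0)}$ depends weakly continuously on $x$ on Skorokhod path space, and the functional $\omega\mapsto u(M(L(t))(\omega))(\omega)$ is $\Pb^{(x,0)}$--a.s.\ Skorokhod--continuous because, under the infinite--activity hypothesis inherited from Lemma \ref{lem:ontinuity}, $\sigma$ strictly overshoots $t$ at $L(t)$ (so $L(t)$ depends continuously on the path) and by Lemma \ref{nosimjump} $M$ is continuous at $L(t)$; the continuous mapping theorem then gives continuity of $\Pi_t u$ in $x$. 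For decay at infinity, write $u=u^+-u^-$ and fix a sequence $\labsrabs{x_n}\to\infty$: the family $\psi_n(t):=\Pi_t u^\pm(x_n)\geq 0$ is bounded by $\left\|u^\pm\right\|$ and equicontinuous on $[0,\infty)$ by Lemma \ref{unifcont}, so by Arzel\`a--Ascoli and a diagonal argument a subsequence $\psi_{n_k}$ converges locally uniformly to a non-negative continuous $\psi$ whose Laplace transform satisfies $\int_0^\infty e^{-\lambda t}\psi(t)\,dt=\lim_k\mathpzc{R}_\lambda u^\pm(x_{n_k})=0$ for every $\lambda>0$ (since $\mathpzc{R}_\lambda u^\pm\in C_0$); uniqueness of Laplace transforms forces $\psi\equiv 0$, and so $\Pi_t u^\pm(x_n)\to 0$ for every $t$. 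Combining with continuity in $x$ one gets $\Pi_t u\in C_0$.

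The main obstacle I anticipate is the Skorokhod--continuity argument for continuity in $x$: one has to combine the strict--overshoot property coming from the infinite--activity minorant of Lemma \ref{lem:ontinuity} with Lemma \ref{nosimjump} and the classical Skorokhod continuity of first hitting times of open sets, to guarantee that the event on which $\omega\mapsto u(M(L(t))(\omega))(\omega)$ is continuous has full $\Pb^{(x,0)}$--measure; the remaining steps are then direct applications of standard perturbation and compactness tools.
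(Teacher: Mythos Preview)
Your argument is correct, but the two halves differ from the paper in opposite directions. For $\mathpzc{R}_\lambda u\in\text{Dom}(G)$ you work directly on $C_0(\Rb^d)$ via the Feynman--Kac semigroup generated by the bounded perturbation $G-f(\lambda,\cdot)$; the paper instead lifts to $C_0(\Rb^d\times[0,\infty))$, writes $\lambda\,\mathpzc{R}_\lambda u(x)=Rh(x,0)$ with $h(x,z)=u(x)f(\lambda,x)e^{-\lambda z}$ and $R$ the potential of the coupled semigroup $P_t$, and then extracts $G\mathpzc{R}_\lambda u\in C_0$ from $ARh=-h$ together with the splitting of $A$ in Proposition~\ref{propgen}. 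Your route is the more elementary one here and avoids the two-dimensional machinery. For $\Pi_t u\in C_0$, however, the paper is much shorter: once $\mathpzc{R}_\lambda u\in C_0$ for all $\lambda>0$ and $t\mapsto\Pi_t u$ is sup-norm continuous with values in $B_b(\Rb^d)$ (Lemma~\ref{unifcont}), the closed-subspace Laplace inversion result \cite[Proposition~1.7.6]{abhn} yields $\Pi_t u\in C_0$ in one stroke, since $C_0$ is sup-norm closed in $B_b$. Your Arzel\`a--Ascoli argument for decay at infinity is a hand-made instance of exactly this principle, and running it with $x_n\to x$ finite would equally give continuity in $x$; your Skorokhod-continuity detour for the latter is therefore entirely avoidable. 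It can be made rigorous along the lines you sketch, but it is the one place where real extra work would be needed, and it buys nothing over the Laplace-inversion shortcut.
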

\begin{proof}
First we prove that $\mathpzc{R}_\lambda u \in C_0 \l \mathbb{R}^d \r$ for any $u \in C_0 \l \mathbb{R}^d \r$. Observe that, for any Borel set $B \subset \mathbb{R}^d$ we have that
\begin{align}
\l \mathpzc{R}_\lambda \ind B \r(x) \, = \, &\mathds{E}^x\int_0^\infty e^{-\lambda t} \ind {B}(X_t) dt \notag \\
&\mathds{E}^x \sum_y \int_{\sigma^0(y-)}^{\sigma^0(y)} e^{-\lambda t} \ind B(M_y) \, dt \notag \\
= \, & \lambda^{-1} \mathds{E}^x \sum_y \l e^{-\lambda \sigma^0(y-)} - e^{-\lambda \sigma^0(y)} \r \ind B(M_y) \notag \\
= \, & \lambda^{-1} \mathds{E}^x \sum_y \l 1-e^{-\lambda (\sigma^0(y)-\sigma^0(y-))} \r e^{-\lambda \sigma^0(y-)} \ind B(M_y).
\end{align}
Recall that under our assumptions $M$ is a Hunt process with a reference measure. Hence apply \cite[Lemma 2.24]{cinlarlevy}) and use also the fact that, a.s., $\sigma^0(y) = \sigma^0(y-)$ to write
\begin{align}
\widetilde{q}(\lambda, x) \, = \,& \lambda^{-1} \mathds{E}^x \int_0^\infty \int_0^\infty \l 1-e^{-\lambda s} \r \nu(ds, M_y) e^{-\lambda \sigma^0(y-)} \ind B(M(y)) dy \notag \\
= \, & \lambda^{-1} \mathds{E}^x \int_0^\infty  e^{-\lambda \sigma^0(y)} f(\lambda, M_y) \ind B(M(y)) dy.
\end{align}
Hence we have by a classical standard machine argument that, for any $u \in B_b \l \mathbb{R}^d \r$,
\begin{align}
\mathpzc{R}_\lambda u\, = \, \lambda^{-1} \mathds{E}^x \int_0^\infty  e^{-\lambda \sigma^0(y)} f(\lambda, M_y) u(M(y)) dy.
\label{calclapl}
\end{align}
Hence if we define $R$ as the potential operator of $P_t$, i.e.,
\begin{align}
Rh : = \int_0^\infty P_th \,  dt
\end{align}
we obtain
\begin{align}
\mathpzc{R}_\lambda u (x) \, = \,  \lambda^{-1} Rh(x,0) \text{ where } h(x,z) = u(x) f(\lambda, x) e^{-\lambda z}.
\label{respot}
\end{align}
Since $P_t$ has the Feller property, we have that $P_th \in C_0 \l \mathbb{R}^d \times [0, \infty) \r$ and further
\begin{align}
| P_t h | \, \leq \,  C \left\| u \right\| \mathds{E}e^{-\sigma^0(t)} \, = \,   C \left\| u \right\| \mathds{E}^xe^{-\int_0^t f(\lambda, M_w) dw} \, \leq \, C \left\|  u \right\| e^{-ct}
\label{boundph}
\end{align}
which is integrable on $(0, \infty)$. Hence 
\begin{align}
x \mapsto Rh(x,0) \in C_0 \l \mathbb{R}^d \r
\label{rc0}
\end{align}
by the dominated convergence theorem since $(t,x,z) \mapsto P_th(x,z)$ is continuous for each $h \in C_0 \l \mathbb{R}^d \times [0, \infty) \r$. The fact that $\Pi_tu \in C_0 \l \mathbb{R}^d \r$ follows from uniform continuity of $t \mapsto \Pi_tu$ and \cite[Proposition 1.7.6]{abhn}.

Now we prove that $\mathpzc{R}_\lambda u \in \text{Dom}(G)$ for $u \in C_0 \l \mathbb{R}^d \r$. We have that
 \begin{align}
\mathpzc{R}_\lambda u (x) \, = \,\lambda^{-1} Rh (x,0).
 \end{align}
But since $\text{Ran}(R) \subset \text{Dom}(A)$ one has that $Rh(x,z) \in \text{Dom}(A)$ and $ARh=-h \in C_0 \l \mathbb{R}^d \r$ (e.g. \cite[Lemma 3.5.72]{niels2}). Then since
\begin{align}
ARh(x,z) = \lambda e^{-\lambda z} \l  G\mathpzc{R}_\lambda u  + f(\lambda, x) \mathpzc{R}_\lambda u \r
\end{align}
and $\mathpzc{R}_\lambda u \in C_0 \l \mathbb{R}^d \r$ we have that $G\mathpzc{R}_\lambda u \in C_0 \l \mathbb{R}^d \r$. It follows that $\mathpzc{R}_\lambda  u \in \text{Dom}(G)$.
\end{proof}
Now we can provide the form of the Kolmogorov equation of $X_t$. We show that $t \mapsto \Pi_tu$ is a mild solution of \begin{align}
\mathfrak{D}^\cdot_tq(t) \, = \, G q(t), \qquad q(0) = u 
\label{eqtheorem}
\end{align}
i.e., it is a function $q(t) \in C\l [0, \infty) ; C_0 \l \mathbb{R}^d \r \r$ such that $\int_0^t q(s) ds \in \text{Dom}(G)$ which satisfies
\begin{align}
\int_0^t \l q(s, \cdot) -u(\cdot) \r \bar{\nu}(t-s, \cdot) \, ds \, = \, G \int_0^t q(s) \, ds.
\end{align}
Note that this notion of mild solution is equivalent to the classical notion (e.g. \cite[Def. 6.3]{engelnagel}) valid for the abstract Cauchy problem
\begin{align}
\frac{d}{dt} q(t) = G q(t) , \qquad q(0) = u,
\label{acp}
\end{align}
where $G$ is a closed linear operator. Indeed a function $q(t)$ is said to be a mild solution of \eqref{acp} if 
\begin{align}
\int_0^t q(s) ds \in \text{Dom}(G) \text{ and } q(t) - u \, = \, G \int_0^t q(s) \, ds.
\end{align}
It is a known fact that $t \mapsto T_tu$ is a mild solution of \eqref{acp} for any $u \in C_0 \l \mathbb{R}^d \r$ (e.g. \cite[Proposition 3.1.9]{abhn}) and thus the following theorem provides the semi-Markov analogue of this fact.
\begin{te}
\label{teeqcont}
Under the assumptions of Proposition \ref{propcc} we have that the mapping 
\begin{align}
[0, \infty) \ni t \mapsto q(t) \, := \, \Pi_tu 
\end{align}
is a mild solution of \eqref{eqtheorem} for any $u \in C_0 \l \mathbb{R}^d \r$.
\end{te}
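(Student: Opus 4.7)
The plan is to pass to the Laplace domain in the time variable, derive a pointwise-in-$x$ identity linking $G\mathpzc{R}_\lambda u$, $\mathpzc{R}_\lambda u$, and $u$, and then invert. Combining Lemma \ref{unifcont} with Proposition \ref{propcc}, the candidate $q(t)=\Pi_tu$ belongs to $C([0,\infty);C_0(\Rb^d))$ and its Laplace transform $\mathpzc{R}_\lambda u$ lies in $\mathrm{Dom}(G)$, so both sides of the claimed mild equation are meaningful and the Laplace transform is amenable to $G$.

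The crucial input is the Laplace-domain identity
\begin{equation}
\label{keymild}
G\mathpzc{R}_\lambda u(x) \, = \, f(\lambda,x)\lbrb{\mathpzc{R}_\lambda u(x)-\lambda^{-1}u(x)}, \qquad \lambda>0,\ x\in\Rb^d.
\end{equation}
To establish \eqref{keymild} I would apply the generator $A$ of $(M_t,\sigma_t)$, in the decomposed form of Remark \ref{remrepra}, to the function $Rh(x,z)=\lambda e^{-\lambda z}\mathpzc{R}_\lambda u(x)$ with $h(x,z)=u(x)f(\lambda,x)e^{-\lambda z}$. The $G_x$-part produces $\lambda e^{-\lambda z}G\mathpzc{R}_\lambda u(x)$ and the $\nu$-integral produces $-\lambda e^{-\lambda z}f(\lambda,x)\mathpzc{R}_\lambda u(x)$, because $\int_0^\infty(e^{-\lambda w}-1)\nu(x,dw)=-f(\lambda,x)$. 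Combining with the potential identity $ARh=-h$ already used in the proof of Proposition \ref{propcc} and cancelling the common factor $e^{-\lambda z}$ yields \eqref{keymild}.

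Next I would Laplace-transform the mild equation itself. A direct Fubini calculation gives $\int_0^\infty e^{-\lambda s}\bar\nu(s,x)\,ds=f(\lambda,x)/\lambda$, so the convolution theorem identifies the Laplace transform of the candidate left-hand side $F(t):=\int_0^t(q(s)-u)\bar\nu(t-s,\cdot)\,ds$ with $\lambda^{-1}f(\lambda,\cdot)\lbrb{\mathpzc{R}_\lambda u-\lambda^{-1}u}$. On the other hand, $Q(t):=\int_0^t q(s)\,ds$ has Laplace transform $\lambda^{-1}\mathpzc{R}_\lambda u\in\mathrm{Dom}(G)$ by Proposition \ref{propcc}, and by closedness of $G$ the Laplace transform of $GQ(t)$ (once we know this pointwise in $t$) equals $\lambda^{-1}G\mathpzc{R}_\lambda u$. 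Thus \eqref{keymild} is exactly the statement that the two Laplace transforms agree.

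The remaining step --- and the one I expect to be the main obstacle --- is passing from this Laplace-averaged identity to the pointwise-in-$t$ assertion that $Q(t)\in\mathrm{Dom}(G)$ with $GQ(t)=F(t)$. To handle it I would invoke closedness of $G$ together with a real Laplace inversion: the function $F(t)$ is itself continuous with values in $C_0(\Rb^d)$, because $\sup_x\int_0^1\bar\nu(s,x)\,ds<\infty$ follows from \eqref{boundlev} by Fubini and $q$ is uniformly bounded, so both $Q$ and $F$ can be recovered as uniform-in-$x$ Post--Widder limits from their Laplace transforms. Since $Q(t)$ thereby arises as a uniform limit of elements of $\mathrm{Dom}(G)$ whose $G$-images converge uniformly to $F(t)$, the closed graph of $G$ on $C_0(\Rb^d)$ forces $Q(t)\in\mathrm{Dom}(G)$ and $GQ(t)=F(t)$, which is the mild equation. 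The delicate point is maintaining uniform (sup-norm) convergence of the inversion kernels when the multiplier $f(\lambda,\cdot)$ varies in $x$, so that closedness of $G$ can legitimately be invoked.
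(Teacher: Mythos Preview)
Your proposal is correct and follows essentially the same route as the paper: derive the Laplace-domain identity $G\mathpzc{R}_\lambda u = f(\lambda,\cdot)\bigl(\mathpzc{R}_\lambda u - \lambda^{-1}u\bigr)$ by applying $A$ to $Rh(x,z)=\lambda e^{-\lambda z}\mathpzc{R}_\lambda u(x)$ via the decomposition of Remark~\ref{remrepra} and the potential identity $ARh=-h$, then identify the Laplace transform of the convolution term through $\int_0^\infty e^{-\lambda s}\bar\nu(s,x)\,ds=f(\lambda,x)/\lambda$, and finally invert using closedness of $G$.

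The one place where the paper is slightly more efficient is the last step. You propose a hands-on Post--Widder inversion and worry about uniformity in $x$ of the multiplier $f(\lambda,\cdot)$. This concern is unnecessary: once you regard $v(t)=\int_0^t q(s)\,ds$ and $g(t)=\int_0^t(q(s,\cdot)-u)\bar\nu(t-s,\cdot)\,ds$ as $C_0(\Rb^d)$-valued continuous functions (the latter by \cite[Proposition~1.3.4]{abhn}, using that $\sup_x\int_0^t\bar\nu(s,x)\,ds<\infty$), the $x$-dependence of $f(\lambda,\cdot)$ is already absorbed into the Banach-space structure. The paper then simply invokes \cite[Proposition~1.7.6]{abhn}, which packages exactly the statement ``$\widetilde v(\lambda)\in\mathrm{Dom}(G)$ and $G\widetilde v(\lambda)=\widetilde g(\lambda)$ for all $\lambda$, $G$ closed, $v,g$ continuous $\Rightarrow$ $v(t)\in\mathrm{Dom}(G)$ and $Gv(t)=g(t)$'' without any explicit inversion formula. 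Your Post--Widder argument would also work, but citing this result is cleaner and sidesteps the convergence issue you flag.
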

\begin{proof}
Let
\begin{align}
v(t) \,: = \, \int_0^t q(s) \, ds,
\end{align}
and define
\begin{align}
\widetilde{v}(\lambda) \, : = \, \int_0^\infty e^{-\lambda t} v(t) \, dt.
\end{align}
We have by \cite[eq. (1.11)]{abhn} that
\begin{align}
\widetilde{v}(\lambda) \, = \, \lambda^{-1}\mathpzc{R}_\lambda u 
\label{newvermild}
\end{align}
and thus $\widetilde{v}(\lambda) \in \text{Dom}(G)$ by Proposition \ref{propcc}.
Let
\begin{align}
\widetilde{q}(\lambda) \, = \, \int_0^\infty e^{-\lambda t} q(t) \, dt
\end{align}
and then note that \eqref{calclapl} implies
\begin{align}
\widetilde{q}(\lambda,x) \, = \, \lambda^{-1} \mathds{E}^x \int_0^\infty  e^{-\lambda \sigma^0(y)} f(\lambda, M_y) u(M(y)) dy.
\end{align}
Now define
\begin{align}
(R h)(x,z) \, : = \, 	\lim_{N\to \infty} \int_0^N \mathds{E}^x h(M_y, \sigma^z_y) dy
\end{align}
and note that, for $h(x,z): = e^{-\lambda z} f(\lambda, x) u(x)$, one has
\begin{align}
\lambda^{-1}(R h)(x,0) \, = \, \widetilde{q}(\lambda, x).
\end{align}
For $ z\geq 0$ one has instead by repeating the computation \eqref{calclapl} above that 
\begin{align}
(R h)(x,z) \, = \, &  e^{-\lambda z} \mathds{E}^x \int_0^\infty  e^{-\lambda \sigma^0(y)} f(\lambda, M_y) u(M(y)) dy \notag \\
= \, & \lambda e^{-\lambda z} \widetilde{q}(\lambda, x).
\label{potanyz}
\end{align}
Use now the representation \eqref{reprgen} together with \eqref{potanyz} to say that
\begin{align}
(-ARh)(x,0) \, = \, \lambda \l f(\lambda, \cdot) -G \r \widetilde{q}(\lambda, x).
\label{341}
\end{align}
Now if $h(x, z) \in \text{Dom}(R)$, i.e.,
\begin{align}
\text{Dom}(R)= \ll h \in  C_0 \l \mathbb{R}^d \times [0, \infty) \r : Rh \in C_0 \l \mathbb{R}^d \times [0, \infty) \r \rr,
\end{align}
we could use \cite[Lemma 3.5.72]{niels2} to say that $-ARh=h$ if $Rh \in \text{Dom}(A)$, but we proved this before. Hence, use this in \eqref{341} to find that
\begin{align}
\lambda \l f(\lambda, \cdot) -G \r \widetilde{q}(\lambda, x) \ = \, h(x, 0) \, = \, f(\lambda, x) u(x).
\label{363}
\end{align}
Now we have by \eqref{newvermild}and \eqref{363} that
\begin{align}
G \widetilde{v}(\lambda) \, = \, G \frac{\widetilde{q}(\lambda, \cdot)}{\lambda} \, = \, \frac{f(\lambda, \cdot)}{\lambda} \widetilde{q}(\lambda, \cdot) - \frac{f(\lambda)}{\lambda^2} u \, =: \, \widetilde{g}(\lambda)
\end{align}
and since
\begin{align}
\int_0^\infty e^{-\lambda t} \bar{\nu}(t,x) dt \, = \, \frac{f(\lambda,x)}{\lambda}
\end{align}
we have by \cite[Proposition 1.6.4]{abhn} that
\begin{align}
g(t) \, = \, \int_0^t \l q(s,\cdot) - u \r \, \bar{\nu}(t-s,\cdot) \, ds.
\end{align}
Note that since by Lemma \ref{unifcont} we know that $t \mapsto q(t,x)$ is $C \l [0, \infty) ; C_0 \l \mathbb{R}^d \r \r$  we have that $t \mapsto g(t)$ is $C \l [0, \infty) ; C_0 \l \mathbb{R}^d \r \r$ by \cite[Proposition 1.3.4]{abhn} and thus, since $G$ is closed, it follows from \cite[Proposition 1.7.6]{abhn} that $Gv(t) = \int_0^t \l q(s,\cdot) - u \r \, \bar{\nu}(t-s,\cdot) \, ds$ for all $t \geq 0$, i.e., the function $q(t, \cdot)$ is a mild solution of \eqref{eqtheorem}.
\end{proof}
In the forthcoming Section \ref{secconv} we will study the asymtpotic behaviour of $X(t)=B \l L(t) \r$ as $t \to \infty$. It turns out that some interesting and clarifying examples concerns the case in which $x \mapsto f(\lambda, x)$ is a stepped function. Hence, for completeness, we provide the form of the Kolmogorov's equation of $X_t$ to cover the case in which $x \mapsto f(\lambda, x)$ is not continuous. In this case, for example, the assumptions of Proposition \ref{propgen} are not satisfied as well as the assumptions of Proposition \ref{propcc}. An inspection of the proof shows indeed that $\mathpzc{R}_\lambda u \in \text{Dom}(G)$ is a consequence of $P_t h \in C_0 \l \mathbb{R}^d \r$ and that $h \in \text{Dom}(R)$ where $h(x,z) = u(x) e^{-\lambda z} f(\lambda, x)$. In general this is no more true, even if we equip $P_t$ with the Feller property since $x \mapsto f(\lambda, x)$ is not continuous.

However it turns out that the equation can be still written down as in Theorem \ref{teeqcont} in an approximate sense.
Hence we will consider an approximating sequence of Bernstein functions $f^n(\lambda, x)$, each one of which satisfies the assumptions used above and such that $f^n (\lambda, x) \to f(\lambda, x)$ where $x \mapsto f(\lambda, x)$ is not necessarily continuous but satisfies the assumption \eqref{boundlev} and $f^n(\lambda, x)$ is bounded above and below by constants $c_n$ and $C_n$ (which can of course depend on $\lambda$).  Hence we need to provide first a weak convergence result. 
\begin{prop}
\label{conv}
Assume that $x \mapsto f(\lambda, x)$ is bounded below by $c>0$ and above by $C\geq c$ and that \eqref{boundlev} holds. Assume further that there exists a sequence $f^n \l \lambda, x \r \to f(\lambda, x)$ such that, for any $n$ the functions $f^n \l \lambda, x \r$ are Bernstein functions as in Propositions \ref{propgen} and \ref{propcc} with constants $c_n \leq f^n(\lambda, x) \leq C_n$. Assume that $\inf c_n >0$, $\sup C_n<\infty$. Now let $\bar{u}^n(s)$ be the tail of Lemma \ref{lem:ontinuity} and assume that $\mathfrak{u}(s):=\inf_n \bar{u}^n(s)$ is the tail of the L\'evy measure of a subordinator with infinite activity. Let $M$ be a Feller process as in Lemma \ref{unifcont}.
Let $\Pi_t^n u:= \mathds{E}^x u \l M(L^n(t)) \r$. Then for any $u \in C_b \l \mathbb{R}^d \r$ one has $\Pi_t^nu \to \Pi_tu$. 
\end{prop}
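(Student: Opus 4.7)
The plan is to exploit the resolvent identity \eqref{calclapl}, which expresses $\mathpzc{R}_\lambda^n u$ and $\mathpzc{R}_\lambda u$ as pure $M$-functionals and so makes the pointwise convergence $\mathpzc{R}_\lambda^n u \to \mathpzc{R}_\lambda u$ essentially immediate; one then upgrades to convergence of $\Pi_t^n u$ via a uniform-in-$n$ equicontinuity property of $t \mapsto \Pi_t^n u$ combined with the uniqueness theorem for Laplace transforms.

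For the resolvent convergence, formula \eqref{calclapl} applied to $f^n$ gives, after conditioning on $M$ and using \eqref{eq:sigma}, $\mathpzc{R}_\lambda^n u(x) = \lambda^{-1}\mathds{E}^x \int_0^\infty \exp\bigl(-\int_0^y f^n(\lambda, M_w)\, dw\bigr)\, f^n(\lambda, M_y)\, u(M_y)\, dy$, and analogously for $\mathpzc{R}_\lambda u(x)$. The pointwise convergence $f^n(\lambda, \cdot) \to f(\lambda, \cdot)$, together with the uniform bounds $\inf_n c_n > 0$ and $\sup_n C_n < \infty$, yields by dominated convergence a.s. convergence of $\int_0^y f^n(\lambda, M_w)\, dw$ to $\int_0^y f(\lambda, M_w)\, dw$ for each $y$. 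The outer integrand is dominated by the integrable function $e^{-(\inf_n c_n) y}(\sup_n C_n)\|u\|$, so two applications of dominated convergence give $\mathpzc{R}_\lambda^n u(x) \to \mathpzc{R}_\lambda u(x)$ for every $x \in \mathbb{R}^d$ and every $\lambda > 0$.

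The key technical step is uniform equicontinuity of the family $\{t \mapsto \Pi_t^n u(x)\}_n$. The hypothesis that $\mathfrak{u}(s) = \inf_n \bar{u}^n(s)$ is the tail of the L\'evy measure of an infinite-activity subordinator allows the coupling constructed in the proof of Lemma \ref{lem:ontinuity} to be carried out with the tail $\mathfrak{u}$ in place of the per-$n$ tail $\bar{u}^n$: this produces a common lower subordinator $\underline{\sigma}$ satisfying $\underline{\sigma} \leq \sigma^n$ stochastically for every $n$, and its infinite activity then yields $\lim_{s \to t}\sup_n \sup_x P^x(|L^n(t) - L^n(s)| > \delta) = 0$. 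Combining this uniform estimate with the Feller hypothesis \eqref{hypdist} on $M$ and the scheme of proof of Lemma \ref{unifcont} gives the desired equicontinuity, uniform in $n$.

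For each $x$ the family $\{t \mapsto \Pi_t^n u(x)\}_n$ is therefore uniformly bounded by $\|u\|$ and uniformly equicontinuous, so Arzel\`a--Ascoli extracts from any subsequence a further subsequence converging uniformly on compact $t$-intervals to some continuous bounded $F(t, x)$, whose Laplace transform in $t$ equals $\mathpzc{R}_\lambda u(x) = \int_0^\infty e^{-\lambda t}\Pi_t u(x)\, dt$. Uniqueness of Laplace transforms on continuous bounded functions forces $F(t, x) = \Pi_t u(x)$, and since every convergent subsequence has the same limit, the full sequence converges: $\Pi_t^n u(x) \to \Pi_t u(x)$ for each $x$ and $t$. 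The main obstacle is not the resolvent convergence (which is essentially automatic from \eqref{calclapl}) but the uniform equicontinuity estimate; the role of the infinite-activity bound $\mathfrak{u}$ is precisely to prevent the inverses $L^n$ from developing uniformly non-negligible flat regions near $t$, which would otherwise destroy the compactness argument.
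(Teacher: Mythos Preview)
Your proof is correct and follows essentially the same route as the paper: resolvent convergence via the identity \eqref{calclapl} together with dominated convergence, then uniform-in-$n$ equicontinuity of $t\mapsto \Pi_t^n u$ obtained from the common lower subordinator built out of $\mathfrak{u}$, and finally Laplace uniqueness to identify the limit. The only cosmetic difference is that the paper invokes \cite[Proposition 1.7.6 and Theorem 1.7.3]{abhn} directly to pass from resolvent convergence to a.e.\ convergence of $\Pi_t^n u$ (and then upgrades to all $t$ via the equicontinuity), whereas you run an explicit Arzel\`a--Ascoli subsequence argument; both packages encode the same mechanism.
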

\begin{proof}
Let
\begin{align}
\mathpzc{R}_\lambda^n u \, : = \, \int_0^\infty e^{-\lambda t} \, \Pi_t^n u \, dt.
\end{align}
and recall that $\left| \Pi_t^nu \right| \leq \left\|u \right\|$.
Let's apply again \cite[Proposition 1.7.6 and Theorem 1.7.3]{abhn} to say that if $\mathpzc{R}_\lambda^n u  \to \mathpzc{R}_\lambda u$ then $\Pi_tu = \lim_n \Pi_t^n u$ for almost all $t \geq 0.$ However if $t \mapsto \lim_n \Pi_t^nu$ is continuous we have that the equality is true for any $t\geq 0$ since $t \mapsto \Pi_tu$ is certainly continuous by Lemma \ref{unifcont}.
First we prove that $\mathpzc{R}_\lambda^n u  \to \mathpzc{R}_\lambda u$ and then we prove the continuity of $t \mapsto \lim_n\Pi_t^nu$.
By \eqref{calclapl} we obtain that
\begin{align}
\mathpzc{R}_\lambda^n u (x) \, = \, &\lambda^{-1} \int_0^\infty \mathds{E}^x e^{-\lambda \sigma_n^0(t)} u(M(t)) f^n(\lambda, M (t)) dt \notag \\
= \, & \lambda^{-1} \int_0^\infty P_t^nh^n(x,0) \, dt
\end{align}
where $h^n(x,z) = f^n(\lambda,x) u(x) e^{-\lambda z}$ and $P_t^n$ denote the Feller semigroup of the process $\l M_t, \sigma_t^n \r$. Note now that
\begin{align}
&P_t^nh^n (x,0) \notag \\ = \, &\int_{\mathbb{R}^d } \int_0^\infty f^n(\lambda, y) \, u(y) e^{-\lambda z} P^x \l M(t) \in dy, \sigma_n^0(t) \in dz \r  \notag \\
= \, & \int_{\mathbb{R}^d }f^n(\lambda, y) \, u(y) \mathds{E}^x \left[  e^{-\int_0^t f^n(\lambda, M_s)ds}  \mid   M(t) = y\right] P^x \l M(t) \in dy \r \notag \\
\longrightarrow \, &  \int_{\mathbb{R}^d }f(\lambda, y) \, u(y) \mathds{E}^x \left[  e^{-\int_0^t f(\lambda, M_s)ds}  \mid   M(t) = y\right] P^x \l M(t) \in dy \r \notag \\
= \, & P_th(x,0)
\end{align}
where in the last step the limit is moved inside the integrals by the bounded convergence theorem.
Further note
\begin{align}
| P_t^nh^n (x,0) | \, \leq  \,& \l \sup C_n \r\left\| u \right\| \mathds{E}e^{-\int_0^t f^n(\lambda, M_y)dy} \notag \\
\leq \, &\l \sup C_n \r \left\| u \right\| e^{-t\inf_n c_n}
\end{align}
which is integrable and thus $\mathpzc{R}_\lambda^n u  \to \mathpzc{R}_\lambda u$ by the dominated convergence theorem.
Now we prove that $t \mapsto \lim_n \Pi_t^nu$ is continuous by showing that $t \mapsto \Pi_t^nu$ is uniformly continuous  with respect to $n$. This can be done, under the assumption $\bar{u}(0,x):= \inf_n \bar{\nu}^n(0,x) = \infty$ for any $x \in \mathbb{R}^d$ with the same argument used in Lemma \ref{unifcont}. Hence we can construct the subordinator $\underline{\sigma}$ such that stochastically $\sigma_s^n \geq \underline{\sigma}_s$ regardless of $n$ (and $x$) as in Lemma \ref{lem:ontinuity}. Hence define for any fixed $t>0$ and the running trajectory of $M$
	\begin{equation}\label{eq:lowerwn}
		\underline{\sigma}^{(m,n)}_t=\sum_{s\leq t}\sum_{k}\frac{k}{2^m}\ind{\Delta \sigma_s\in \lbbrb{\frac{k}{2^m},\frac{k+1}{2^m}}}\mathrm{B}^n_{M_{s-},\frac{k}{2^n}}, 
	\end{equation}
where $\mathrm{B}^n_{M_{s-},\frac{k}{2^n}}$ is a Bernoulli random variable with parameter $p^n_{M_{s-},\frac{k}{2^n}}$ such that 
	\[p^n_{M_{s-},\frac{k}{2^m}}=\frac{\mathfrak{u}\lbrb{\lbbrb{\frac{k}{2^m},\frac{k+1}{2^m}}}}{\nu^n\lbrb{M_{s-},\lbbrb{\frac{k}{2^m},\frac{k+1}{2^m}}}}\leq 1\]
	 provided $\nu^n\lbrb{\lbbrb{\frac{k}{2^m},\frac{k+1}{2^m}},M_{s-}}>0$ and zero otherwise.
Hence conditionally on $\lbrb{M_s=w_s}_{s\leq t}$ 
	 \begin{equation}\label{eq:ineqSwn}
	 	\underline{\sigma}^{(m,n)}_t\leq \sigma_t^n.
	 \end{equation}
	 Moreover, 
	 \begin{equation*}
		\Eds{e^{-\lambda	\underline{\sigma}^{(m,n)}_t }\Big|M_s=w_s,s\leq t}= e^{-t\sum_{k}\lbrb{1-e^{-\lambda \frac{k}{2^n}}}\mathfrak{u}\lbrb{\lbbrb{\frac{k}{2^m},\frac{k+1}{2^m}}}}   
	 \end{equation*}
	 and $\underline{\sigma}^{(m,n)}_t$ are compound Poisson processes and such that $\limi{m}\underline{\sigma}^{(m,n)}_t\stackrel{d}{=}\underline{\sigma}_t$, where $\underline{\sigma}$ is a subordinator with $\Eds{e^{-\lambda\underline{\sigma}_1 }}=e^{-\int_{0}^{\infty}\lbrb{1-e^{-\lambda y}}\mathfrak{u}(dy)}$. As in Lemma \ref{lem:ontinuity} one has
	   \begin{equation*}
	 \Pb^x\lbrb{L_t^n-L_{a_l}^n>\epsilon} \leq  \sup_{y\in \Rb^d}  \Pb^y\lbrb{\sigma_{\epsilon}^n\leq t-a_l} \, \leq \, P \l \underline{\sigma}_\epsilon^{(m,n)} \leq t-a_l \r
	 \end{equation*}
and thus by the same argument we have \begin{equation*}
	 \limsupi{l} \sup_n \sup_{x \in \Rb^d}\Pb^x\lbrb{L_t^n-L_{a_l}^n>\epsilon}\leq \limi{l}\Pbb{\underline{\sigma}_\epsilon\leq t-a_l}=0.
	 \end{equation*}
Now we can repeat the same steps as in Lemma \ref{unifcont} to say that $t \mapsto \Pi_t^nu$ is continuous uniformly in $n$.

\end{proof}
Here we provide the approximation of the Kolmogorov's equation of $X(t)$, i.e., we show that $t \mapsto \Pi_tu$ is a mild solution of \eqref{eqtheorem} in the following sense
\begin{align}
\int_0^t \l \Pi_su(x) -u(x) \r \, \bar{\nu}(t-s,x) \, ds \, = \, \lim_n G \int_0^t \Pi_s^nu \, ds.
\label{approxmild}
\end{align}
\begin{prop}
\label{teeqbound}
Let $M$ be the Feller process generated by $\l G, \text{Dom}(G)\r$ under the assumptions on Theorem \ref{teeqcont}. Suppose that $f$ satisfies the assumptions of Proposition \ref{conv} and further that $\int_0^t \sup_n \bar{\nu}^n(s,x)ds < \infty$. Denote $\Pi_t^nu := \mathds{E}^x u \l M \l L^n_t \r \r$. Then we have that the mapping
\begin{align}
[0, \infty) \ni t \mapsto  q(t):=\Pi_t u
\end{align}
is a mild solution of \eqref{eqtheorem} in the sense of \eqref{approxmild}.
\end{prop}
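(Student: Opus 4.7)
The plan is to apply Theorem \ref{teeqcont} to each approximant $f^n$ and then pass to the limit $n\to\infty$ on both sides of the resulting mild identity. By construction each $f^n$ satisfies the hypotheses of Propositions \ref{propgen} and \ref{propcc}, so Theorem \ref{teeqcont} applies to the time-changed process $M(L^n(\cdot))$, yielding for every $n$
\begin{align}
\int_0^t \lbrb{\Pi_s^n u - u} \bar{\nu}^n(t-s,\cdot)\, ds \, = \, G \int_0^t \Pi_s^n u\, ds.
\label{eq:nmild}
\end{align}

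Next I would pass to the limit on the left-hand side of \eqref{eq:nmild}. Proposition \ref{conv} delivers the pointwise convergence $\Pi_s^n u(x) \to \Pi_s u(x)$, and trivially $|\Pi_s^n u - u| \leq 2\left\| u \right\|$. To get convergence of the tails I would invoke the continuity theorem linking a Bernstein function to the underlying \LL measure of its subordinator: from $f^n(\lambda, x) \to f(\lambda, x)$ for every $\lambda>0$ at each fixed $x$, the measures $\nu^n(\cdot, x)$ converge vaguely to $\nu(\cdot, x)$, and hence the non-increasing tails $\bar{\nu}^n(s,x)$ converge to $\bar{\nu}(s,x)$ at every continuity point of $s \mapsto \bar{\nu}(s,x)$, hence Lebesgue-a.e. in $s$. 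The standing hypothesis $\int_0^t \sup_n \bar{\nu}^n(s,x)\, ds < \infty$ then furnishes the integrable envelope $2 \left\| u \right\| \sup_n \bar{\nu}^n(t-s, x)$ for the integrand, and the dominated convergence theorem gives
\begin{align}
\lim_n \int_0^t \lbrb{\Pi_s^n u - u} \bar{\nu}^n(t-s,\cdot)\, ds \, = \, \int_0^t \lbrb{\Pi_s u - u} \bar{\nu}(t-s,\cdot)\, ds.
\end{align}
Combining this with \eqref{eq:nmild}, the sequence $G\int_0^t \Pi_s^n u\, ds$ must converge to the same common value, which is precisely the identity \eqref{approxmild} we wish to establish.

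The step I expect to be the main obstacle is justifying the a.e.\ pointwise convergence $\bar{\nu}^n(\cdot, x) \to \bar{\nu}(\cdot, x)$: this relies on the continuity theorem identifying a subordinator's Laplace exponent with its \LL measure, combined with the fact that a non-increasing function has at most countably many discontinuities, so almost every $s>0$ is a continuity point of $\bar{\nu}(\cdot,x)$. Crucially, the variable $x$ plays no role in this step since both sides of \eqref{eq:nmild} are evaluated at a fixed $x$, so the argument is purely one-dimensional in the $s$-variable. Everything else reduces to routine applications of dominated convergence enabled by the integrable envelope provided by the hypothesis on $\sup_n \bar{\nu}^n$.
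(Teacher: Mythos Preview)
Your approach is essentially identical to the paper's: apply Theorem \ref{teeqcont} to each $f^n$, then pass to the limit on the left-hand side by dominated convergence using the envelope $2\|u\|\sup_n\bar\nu^n(t-s,x)$, and read off \eqref{approxmild}. The paper's proof is terser and does not spell out why $\bar\nu^n(s,x)\to\bar\nu(s,x)$ a.e.\ in $s$; your argument via vague convergence of the L\'evy measures and continuity points of the monotone tail is the natural way to fill that gap.
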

\begin{proof}
We have by Theorem \ref{teeqcont} that $\Pi_t^nu$ satisfies
\begin{align}
\int_0^t \l \Pi_s^nu(x)-u(x) \r \, \bar{\nu}^n(t-s,x) \, ds \, = \, G\int_0^t  \Pi_s^nu \, ds.
\end{align}
Since
\begin{align}
\left\| \Pi_t^nu -u \right\| \, \leq \,  2\left\| u \right\|
\end{align}
and since $\int_0^t \sup_n  \bar{\nu}^n(t-s,x)ds< \infty$ we have by the dominated convergence theorem and Proposition \ref{conv} that
\begin{align}
\int_0^t \l \Pi_su(x) -u(x) \r \, \bar{\nu}(t-s,x) \, ds \, = \, \lim_n G\int_0^t \Pi_s^nu \, ds.
\end{align}
\end{proof}
\begin{os}
Suppose that $\alpha^n(x)$ is a sequence of functions such that there exist $\delta_1>0$ and $\delta_2>0$ small enough such that $\alpha^n(x) < 1-\delta_1$ and $\alpha^n(x)> \delta_2$, so they never reach the boundary $0$ or $1$ for any $x$ and $n$. Define
\begin{align}
\nu^n(ds,x) \, = \, \frac{\alpha^n(x)s^{-\alpha^n(x)-1}}{\Gamma(1-\alpha^n(x))}ds
\end{align}
so that
\begin{align}
\bar{\nu}^n(s,x) \, = \, \frac{s^{-\alpha^n(x)}}{\Gamma(1-\alpha^n(x))}
\end{align}
and
\begin{align}
f^n(\lambda, x) \, = \, \lambda^{\alpha^n(x)}.
\end{align}
Suppose $\alpha^n(x) \to \alpha(x)$ pointwise. Then one has that $\bar{\nu}^n$ and $f^n$ satisfy the assumption of Proposition \ref{teeqbound} since $\lambda^{\alpha^n(x)}$ is always included between to constants (depending on $\lambda)$ $c_n$ and $C_n$, such that $\inf c_n >0$ as well as $ \sup C_n<\infty$ and
\begin{align}
\inf_n \inf_x \bar{\nu}^n(s,x) \, = \, C \l s^{-\underline{\alpha}} \ind{s \leq 1} + s^{-\overline{\alpha}} \ind{s>1}\r
\end{align}
where $\underline{\alpha}= \inf_{(x,n)} \alpha^n(x)$ and $\overline{\alpha} = \sup_{(x,n)} \alpha^n(x)$, is the tail of a L\'evy measure with infinite activity.
\end{os}

\section{The variable order diffusion equation and the anomalous aggregation phenomenon}
\label{secconv}
In this section we study the asymptotic behaviour of the process $X(t)=M(L(t))$ in case the leading process $M$ is a one-dimensional Brownian motion. Hence let $G = \frac{1}{2} \partial^2_x= \Delta$ and assume that
\begin{align}
\nu(ds, x) \, = \, \frac{\alpha(x)s^{-\alpha(x)-1}}{\Gamma(1-\alpha(x))}, \qquad \alpha (x) \in (0,1).
\end{align}
The equation in Theorem \ref{teeqcont} yields to
\begin{align}
\frac{d^{\alpha(x)}}{dt^{\alpha(x)}}q(t,x) \, = \, \Delta q(t,x).
\label{fracdif}
\end{align}
In Fedotov and Falconer \cite{fedofalco} the authors considered the Fokker-Plank (forward) equation
\begin{align}
\frac{\partial}{\partial t} p(x,t) \, = \, \frac{\partial^2}{\partial x^2} D \frac{\partial^{\alpha(x)}}{\partial t^{\alpha(x)}} p(x,t)
\label{forwfedfal}
\end{align}
which can be viewed as the forward equivalent of \eqref{fracdif} (for details on the relationships between equations, see also Ricciuti and Toaldo \cite[Section 5]{rictoa} and we suggest the instructive discussion in Straka \cite{strakavariable} which fully justifies the meaning of \eqref{forwfedfal} as a model for diffusive phenomena). They showed that a random walk on a lattice, which approximates the model behind \eqref{forwfedfal} as $t \to \infty$, converges in probability to the point at which $\alpha(x)$ has its minimum. They further ran some numerical simulations to validate their results.
It turns out that the small value of the anomalous exponent completely dominates the long-time behaviour of subdiffusive system. The authors refer to this phenomenon as a ``Black Swan" (term proposed by Taleb \cite{taleb}), to describe the crucial role of rares event with extreme impact. Similar aggregation phenomena where also observed for a symmetrical random walk by Fedotov \cite{fedopre}.

In this section this phenomenon is investigated rigorously for the semi-Markov process (time-changed Brownian motion) which is related to \eqref{fracdif} by the results in the previous section. Essentially our investigations validate the simulations in \cite{fedofalco} under some technical assumptions on $x \mapsto \alpha(x)$. To be precise we discuss the asymptotic behaviour of two quantities, that is
\begin{equation}\label{eq:asympRel}\frac{\int_{0}^{t}\ind{X(s)\in A}ds}t\quad\text{ and }\quad\Pbb{X(t)\in A},
\end{equation}
where $A\subseteq\Rb$ is usually a neighbourhood of the set where $\alpha$ attains minimum. Depending on the behaviour of $\mathpzc{l}\lbrb{A\cap\lbbrbb{-x,x}}$ as $x\to\infty$ we provide a criterion based on $\alpha^*=\min_{x\in\Rb}\alpha(x),\alpha_I=\lim_{x\to\infty}\alpha(x),\alpha_J=\lim_{x\to-\infty}\alpha(x)$ which distinguishes, apart from a critical case, the two-regime behaviour that is
\[\limi{t}\frac{\int_{0}^{t}\ind{X(s)\in A}ds}t\in\curly{0,1}.\]
When the function $\alpha$ attains minimum on union of intervals we are able to determine whether $\limi{t}\Pbb{X(t)\in A}$ tends to $0$ or $1$ thereby mathematically confirming the outcome of \cite{fedofalco}. We wish the stress that the existence of a limit for the first relation in \eqref{eq:asympRel} does not necessarily imply the existence of a limit for the second. We believe this to be the case in this setting but have not been able to establish this in complete generality. We also believe that this ``aggregation phenomenon'' can be shown also for other Feller processes, e.g., a stable process, and thus further investigation in this direction are needed.

We start with the introduction of some notation. For any set $A\subseteq \Rb$ we set 
\begin{equation}\label{eq:H}
	H_t(A)=\int_{0}^{t}\ind{B_s\in A}ds=\mu_{B,[0,t]}(A).
\end{equation}
For brevity we shall use $H_t:=	H_t(A)$ when $A$ is clear. Then, if $\mathpzc{l}\lbrb{\partial A}=0<\mathpzc{l}(A)$ then it holds, without a loss of generality, that
\begin{equation}\label{eq:decom}
	\sigma\lbrb{s}=\sigma_1\lbrb{H_s}+\sigma_2\lbrb{s-H_s},
\end{equation}
where $\sigma_1,\sigma_2$ are two independent increasing processes constructed from $\sigma$ as follows
\begin{equation}\label{eq:constr}
\begin{split}
&\sigma_1\lbrb{H_s}=\sum_{v\leq s}\lbrb{\sigma(v)-\sigma(v-)}\ind{B_v\in A};\,\,\sigma_2\lbrb{s-H_s}=\sum_{v\leq s}\lbrb{\sigma(v)-\sigma(v-)}\ind{B_v\notin A}.
\end{split}
\end{equation}
 Denote next $A^+=A\cap \Rb^+, A^-=A\cap \Rb^-$ and assume for the time being that $A=A^+$. Also we introduce 
\begin{equation}\label{eq:G}
	G(t):=\int_{0}^{t}\mathpzc{l}\lbrb{A\cap\lbbrbb{0,x}}dx \,\,\text{ and }\,\,D(s)=\inf\curly{t>0:G(t)>s}.
\end{equation}
 Reserve $\tau$ for the inverse local time at zero of the Brownian motion $B$. It is well-known that $\tau$ is a stable subordinator of index $1/2$. Furthermore, from \cite[Chapter 9]{bertoins}  we have that 
 \begin{equation}\label{eq:chi}
 \chi(t):=H_{\tau(t)}=\int_{0}^{\tau(t)}\ind{B(s)\in A}ds
 \end{equation}
  is a driftless subordinator with \LL measure say $\Pi_{\chi}$ and Laplace exponent $\Phi_{\chi}(u)=-\log\Eds{ e^{-u\chi(1)}},u\geq 0$. Since we use extensively two results on the growth of subordinators, see \cite[Chapter III, Theorems 13 and 14]{bertoinb}, we state them here for convenience. Some general and recent results on the growth of \LL processes can be found in  \cite{aurzada,savov09}. 
  \begin{te}\label{thm:growth}
  	Let $\zeta$ be a real valued subordinator with Laplace exponent $\Phi_{\zeta}(u)=-\log\Eds{e^{-u\zeta(1)}},u\geq 0,$ and $\Eds{\zeta(1)}=\infty$. Then the following growth estimates are valid: 
  	\begin{enumerate}
  		\item\label{it:1}\label{it:limsup} If $h:\lbrb{0,\infty}\mapsto \Rb^+$ is a function such that $h(t)/t$ increases then a.s.
  		\begin{equation}\label{eq:limsup}
  		\limsupi{t}\frac{\zeta(t)}{h(t)}=\infty\iff  \int_{1}^{\infty}\bar{\Pi}_{\zeta}\lbrb{h(t)}dt=\infty,
  		\end{equation}
  		where $\bar{\Pi}_{\zeta}(x)=\int_{x}^{\infty}\Pi_{\zeta}(dy)$. If any of the conditions fails then one has  $\limi{t}\zeta(t)/h(t)=0$.
  		\item\label{it:2} If $\Phi_{\zeta}$ is regularly varying at zero with index $\alpha\in\lbrb{0,1}$ then there is a deterministic regularly varying function of index $1/\alpha$, say $f_\zeta$, such that
  			\begin{equation}\label{eq:liminf}
  		\liminfi{t}\frac{\zeta(t)}{f_\zeta(t)}=1.
  		\end{equation}
  	\end{enumerate}
  \end{te}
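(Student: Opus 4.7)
These are classical growth results collected as Theorems 13 and 14 in Chapter III of \cite{bertoinb}; the plan is to recall the two standard arguments.

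For part \eqref{it:1}, I would apply a Borel--Cantelli argument to the big-jumps part of the L\'evy--It\^o decomposition of $\zeta$. The first step is to discretize dyadically along $t_n=2^n$ and consider the events $E_n=\curly{\zeta(t_{n+1})-\zeta(t_n)>h(t_n)}$. Stationarity and independence of increments give two-sided bounds of the form $\Pbb{E_n}\leq C\,t_n\bar\Pi_\zeta(h(t_n))$ and $\Pbb{E_n}\geq c\,t_n\bar\Pi_\zeta(h(t_n))\wedge 1$; the assumption that $h(t)/t$ is increasing is what prevents $h$ from collapsing across a dyadic block, so that the resulting series is comparable to the integral $\int_1^\infty\bar\Pi_\zeta(h(t))\,dt$ after the change of variable $dt=t\,d\log t$. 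Since the $E_n$ are independent, the two Borel--Cantelli lemmas then give the equivalence in \eqref{eq:limsup}. For the dichotomy, if the integral converges only finitely many exceedences of $h$ occur almost surely; controlling the contribution of the remaining jumps of size $\leq h(t)$ together with $h(t)/t\to\infty$ (which is forced by $\Eds{\zeta(1)}=\infty$ combined with the finite integral) yields $\zeta(t)/h(t)\to 0$.

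For part \eqref{it:2} (the Fristedt--Pruitt theorem) the natural choice is
\[
f_\zeta(t)\,:=\,\frac{1}{\Phi_\zeta^{-1}(1/t)},
\]
which is regularly varying of index $1/\alpha$ by the regular variation of $\Phi_\zeta$ and Karamata's theorem. The upper bound $\liminfi{t}\zeta(t)/f_\zeta(t)\leq 1$ would be obtained by an exponential Chernoff estimate $\Pbb{\zeta(t)\leq x}\leq e^{\lambda x-t\Phi_\zeta(\lambda)}$ optimized in $\lambda$ and applied along a geometric subsequence $t_n=q^n$: the increments are independent by stationarity, and the resulting probabilities turn out to be non-summable, so a second Borel--Cantelli argument produces $\liminf\leq 1$. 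The matching inequality $\liminf\geq 1$ is obtained from the same Chernoff bound in the complementary direction, now producing summable probabilities along a similar geometric subsequence, together with a first Borel--Cantelli argument. Regular variation of $\Phi_\zeta$ is used throughout to calibrate both bounds to the sharp constant $1$.

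The main technical obstacle in both parts is the uniform control of $\bar\Pi_\zeta$ (respectively $\Phi_\zeta$) within each dyadic or geometric block: this is exactly where the monotonicity of $h(t)/t$ in \eqref{it:1} and the regular variation of $\Phi_\zeta$ in \eqref{it:2} are used, and without them the naive Borel--Cantelli bookkeeping would blur the sharp dichotomy in \eqref{eq:limsup} or the sharp constant in \eqref{eq:liminf}. Once the uniform estimates are in place, the remainder reduces to routine Chernoff and Borel--Cantelli arguments.
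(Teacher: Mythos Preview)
Your sketch is a reasonable outline of the classical arguments, but note that the paper does \emph{not} actually prove this theorem: it is stated explicitly as a quotation of \cite[Chapter III, Theorems 13 and 14]{bertoinb} (``we state them here for convenience''), with no proof given. So there is nothing to compare against on the paper's side.

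On the substance of your sketch: the Borel--Cantelli strategy for part \eqref{it:1} is indeed the standard one, though in Bertoin's argument the relevant events are of the type $\{\exists s\in(t_n,t_{n+1}]:\Delta\zeta_s>h(t_n)\}$ rather than increments of $\zeta$ itself, which is what makes the probability exactly $1-\exp(-t_n\bar\Pi_\zeta(h(t_n)))$ and hence directly comparable to $t_n\bar\Pi_\zeta(h(t_n))$. For part \eqref{it:2}, your choice $f_\zeta(t)=1/\Phi_\zeta^{-1}(1/t)$ is correct up to a multiplicative constant depending on $\alpha$; the paper's formulation (``there is a \ldots\ function $f_\zeta$ such that $\liminf=1$'') absorbs that constant into $f_\zeta$, so this is fine. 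One small inaccuracy: the upper bound $\liminf\leq 1$ in Fristedt--Pruitt does not come from a Chernoff bound on $\Pbb{\zeta(t)\leq x}$ directly but rather from controlling the \emph{increments} $\zeta(t_{n+1})-\zeta(t_n)$ along the geometric subsequence, since it is these that are independent; the Chernoff/Laplace bound is applied to each increment separately.
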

An immediate corollary is the result.
\begin{coro}\label{cor1:growth}
	If $\zeta$ is stable subordinator of index $\alpha\in\lbrb{0,1}$ then, for any $\epsilon>0$ small enough, almost surely 
	\begin{equation}\label{eq1:growth}
	\begin{split}
	&\liminfi{t}\frac{\zeta(t)}{t^{\frac{1}{\alpha}-\epsilon}}=\infty;\qquad \limsupi{t}\frac{\zeta(t)}{t^{\frac{1}{\alpha}+\epsilon}}=0.
	\end{split}
	\end{equation}
\end{coro}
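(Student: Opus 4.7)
The two statements are essentially immediate applications of the two parts of Theorem \ref{thm:growth}, so my plan is simply to verify the hypotheses in each case and compute.

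For the $\limsup$ statement, I would set $h(t) = t^{1/\alpha + \epsilon}$ and apply item \ref{it:1} of Theorem \ref{thm:growth}. Since $\alpha \in (0,1)$ we have $1/\alpha > 1$, so $h(t)/t = t^{1/\alpha + \epsilon - 1}$ is increasing for every $\epsilon > 0$, and the monotonicity hypothesis is satisfied. Because $\zeta$ is $\alpha$-stable, $\bar{\Pi}_\zeta(x) = C x^{-\alpha}$ for $x > 0$, and hence
\begin{equation*}
\int_1^\infty \bar{\Pi}_\zeta(h(t))\, dt \, = \, C \int_1^\infty t^{-\alpha(1/\alpha + \epsilon)}\, dt \, = \, C \int_1^\infty t^{-1-\alpha \epsilon} \, dt \, < \, \infty.
\end{equation*}
By the dichotomy in \eqref{eq:limsup} the failure of the divergence condition yields $\lim_{t\to\infty} \zeta(t)/h(t) = 0$ a.s., which is exactly $\limsup_{t\to\infty} \zeta(t)/t^{1/\alpha+\epsilon} = 0$.

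For the $\liminf$ statement I would appeal to item \ref{it:2}. Since $\Phi_\zeta(u) = c u^\alpha$ is (exactly) regularly varying at zero of index $\alpha$, Theorem \ref{thm:growth} produces a deterministic function $f_\zeta$, regularly varying of index $1/\alpha$, with $\liminf_{t\to\infty} \zeta(t)/f_\zeta(t) = 1$ a.s. In particular, for any $\eta \in (0,1)$ one has $\zeta(t) \geq (1-\eta) f_\zeta(t)$ for all sufficiently large $t$, a.s. By the Potter/regular variation bounds, a function regularly varying of index $1/\alpha$ satisfies $f_\zeta(t)/t^{1/\alpha - \epsilon} \to \infty$ as $t \to \infty$ for every $\epsilon > 0$. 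Combining the two facts gives $\liminf_{t\to\infty} \zeta(t)/t^{1/\alpha - \epsilon} = \infty$ a.s.

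There is no real obstacle here: the only point demanding a line of justification is the comparison of a regularly varying function of index $1/\alpha$ with a power $t^{1/\alpha - \epsilon}$, which follows from the standard property that $t^{-\beta} L(t) \to \infty$ whenever $L$ is regularly varying of positive index $\beta' > 0$ and $\beta < \beta'$ (apply this with $\beta = 1/\alpha - \epsilon$ and $\beta' = 1/\alpha$). Everything else is a direct quotation of the cited theorem.
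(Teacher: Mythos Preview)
Your proof is correct and follows exactly the approach implicit in the paper, which states the corollary without proof as an immediate consequence of Theorem \ref{thm:growth}. You have simply filled in the verification of the hypotheses for items \ref{it:1} and \ref{it:2} and the regular-variation comparison, which is precisely what the paper leaves to the reader.
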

  With the help of these well-known results we can get the following growth result for the occupation measure $H$.
\begin{prop}\label{prop:Bounds}
	If $\chi(1)$ has a finite mean or the Laplace exponent $\Phi_\chi$ is regularly varying  at zero of index $\alpha\in\lbrb{0,1}$, then, for any $\varepsilon>0$ small enough a.s. 
	\begin{equation}\label{eq:Bounds}
		\limi{t}\frac{H_{\tau(t)}}{H_{t^{2+\varepsilon}}}=0;\qquad \limi{t}\frac{H_{\tau(t)}}{H_{t^{2-\varepsilon}}}=\infty.
	\end{equation}
\end{prop}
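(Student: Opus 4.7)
\emph{Approach.} My plan is to observe that by \eqref{eq:chi} the numerator is already $H_{\tau(t)}=\chi(t)$, so the proposition reduces to producing sharp two-sided a.s.\ power-law bounds on the occupation time $H_{t^{2\pm\varepsilon}}$. The link to $\chi$ goes through the right-continuous inverse $L(s):=\inf\{t\geq 0:\tau(t)>s\}$. I would first record the sandwich
\[
\chi(L(s)-)\;\leq\;H_s\;\leq\;\chi(L(s)),
\]
which follows from $\tau(L(s)-)\leq s\leq \tau(L(s))$ (monotonicity and right-continuity of $\tau$), the monotonicity of $H$, and the identity $\chi(u)=H_{\tau(u)}$ together with continuity of $H$ (which also gives $\chi(u-)=H_{\tau(u-)}$).

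\emph{Two-sided a.s.\ bounds.} Next I would extract power-law bounds for both $\tau$ and $\chi$. Since $\tau$ is a stable-$1/2$ subordinator, Corollary \ref{cor1:growth} gives, for any fixed $\delta>0$, a.s.\ $t^{2-\delta}\leq \tau(t)\leq t^{2+\delta}$ for all large $t$; inverting yields $s^{1/(2+\delta)}\leq L(s)\leq s^{1/(2-\delta)}$ eventually. For $\chi$, in the finite-mean case the strong law for subordinators gives $\chi(t)/t\to m\in(0,\infty)$ a.s., so $\tfrac{m}{2}t\leq \chi(t)\leq 2mt$ eventually. In the regularly varying case, Theorem \ref{thm:growth}(\ref{it:2}) produces a regularly varying $f_\chi$ of index $1/\alpha$ with $\liminf_t \chi(t)/f_\chi(t)=1$, hence $\chi(t)\geq t^{1/\alpha-\delta'}$ eventually for any small $\delta'>0$. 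For the matching upper bound I would transfer the regular variation of $\Phi_\chi$ at $0$ of index $\alpha$ into regular variation of $\bar{\Pi}_\chi$ at $\infty$ of index $-\alpha$ via a Karamata--Tauberian step; then $\int_1^\infty \bar{\Pi}_\chi(t^{1/\alpha+\delta'})\,dt<\infty$ and Theorem \ref{thm:growth}(\ref{it:1}) applied with $h(t)=t^{1/\alpha+\delta'}$ gives $\chi(t)\leq t^{1/\alpha+\delta'}$ eventually. Note that $\mathbb{E}[\chi(1)]=\infty$ automatically in this second case, so the hypothesis of Theorem \ref{thm:growth}(\ref{it:1}) is met.

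\emph{Combining.} To sidestep the left-limit issue at $L(s)$ I would choose $\delta''>\delta$ so that a.s.\ $L(s)>s^{1/(2+\delta'')}$ strictly for all large $s$. Monotonicity of $\chi$ then yields
\[
H_s\;\geq\;\chi(L(s)-)\;\geq\;\chi\bigl(s^{1/(2+\delta'')}\bigr),\qquad H_s\;\leq\;\chi(L(s))\;\leq\;\chi\bigl(s^{1/(2-\delta)}\bigr).
\]
Substituting $s=t^{2+\varepsilon}$ in the first inequality and inserting the lower bound on $\chi$ gives $H_{t^{2+\varepsilon}}\geq C\,t^{(2+\varepsilon)\beta/(2+\delta'')}$ with $\beta=1-\delta'$ in the finite-mean case or $\beta=1/\alpha-\delta'$ in the regularly varying case. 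Pairing this with $\chi(t)\leq C\,t^\gamma$ (where $\gamma=1+\delta'$ or $\gamma=1/\alpha+\delta'$) and picking $\delta',\delta''$ small with respect to $\varepsilon$ makes the exponent of $t$ in the ratio $\chi(t)/H_{t^{2+\varepsilon}}$ strictly negative, delivering the first limit. The second limit $H_{\tau(t)}/H_{t^{2-\varepsilon}}\to\infty$ follows symmetrically, combining the upper estimate $H_{t^{2-\varepsilon}}\leq \chi(t^{(2-\varepsilon)/(2-\delta)})$ with the $\chi$-upper growth rate in the denominator and the $\chi$-lower growth rate for the numerator $\chi(t)$.

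\emph{Main obstacle.} The delicate step is the upper-tail control of $\chi$ in the regularly varying case: one has to translate the hypothesis on $\Phi_\chi$ near $0$ into a tail estimate for $\bar{\Pi}_\chi$ at infinity and then select a suitable $h$ so that Theorem \ref{thm:growth}(\ref{it:1}) produces an effective bound. Everything else reduces to quantitative bookkeeping on the exponents $\delta,\delta',\delta''$, all kept small compared with $\varepsilon$.
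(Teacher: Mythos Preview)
Your proposal is correct and follows essentially the same route as the paper: both invert $\tau$ (the paper via $u(t)=\max\{s:\tau(s)\leq t^{2+\varepsilon}\}$, you via the right inverse $L$), use Corollary~\ref{cor1:growth} to convert the stable-$1/2$ growth of $\tau$ into a power bound on this inverse, and then control the resulting $\chi$-ratio by the strong law in the finite-mean case and by the combination of Theorem~\ref{thm:growth}\eqref{it:2} (lower bound) with the Tauberian transfer plus Theorem~\ref{thm:growth}\eqref{it:1} (upper bound) in the regularly varying case. The only cosmetic difference is that the paper reduces directly to $\limsup_t \chi(t)/\chi(t^{1+\delta})$ before invoking the $\chi$-growth estimates, whereas you track separate power-law exponents for numerator and denominator.
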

\begin{proof}
	From \eqref{eq1:growth} and the fact that $\tau$ is a stable subordinator of index $1/2$ we get almost surely
	\begin{equation}\label{eq:tauLower}
		\liminfi{t}\frac{\tau(t)}{t^{2-\varepsilon}}=\infty 
	\end{equation}
	and 
	\begin{equation}\label{eq:tauUpper}
	\limsupi{t}\frac{\tau(t)}{t^{2+\varepsilon}}=0. 
	\end{equation}
	The proof of \eqref{eq:Bounds} then follows by a pathwise argument in the following fashion. Set $u(t)=\max\curly{s>0:\tau(s)\leq t^{2+\epsilon}}$. Then
	\begin{equation*}
	\begin{split}
	\frac{H_{\tau(t)}}{H_{t^{2+\varepsilon}}}\leq \frac{H_{\tau(t)}}{H_{\tau(u(t)-)}}=\frac{\chi(t)}{\chi(u(t)-)}.
	\end{split}
	\end{equation*}
	From \eqref{eq:tauUpper} which holds for any $\varepsilon>0$, we conclude that there exists $\delta=\delta(\varepsilon)>0,$  such that $u(t)/t^{1+\delta}\to\infty,t\to\infty$. Therefore,
	\begin{equation*}
	\begin{split}
	&\limsupi{t}	\frac{H_{\tau(t)}}{H_{t^{2+\varepsilon}}}\leq 	\limsupi{t}	\frac{\chi(t)}{\chi(t^{1+\delta})}.    
	\end{split}
	\end{equation*}
	Now, the first relation of \eqref{eq:Bounds} follows from the strong law of large numbers, when $\Eds{\chi(1)}<\infty$ and  from  Theorem \ref{thm:growth}, when $\Phi_\chi$ is regularly varying of index $\alpha\in\lbrb{0,1}$ at zero. Indeed in the latter case there is a deterministic $f_\chi$ regularly varying of index $1/\alpha$ such that \eqref{eq:liminf} holds. Thus
 	\begin{equation*}
 	\begin{split}
 	&\limsupi{t}	\frac{H_{\tau(t)}}{H_{t^{2+\varepsilon}}}\leq 	\limsupi{t}	\frac{\chi(t)}{\chi(t^{1+\delta})}\leq 2\limsupi{t}	\frac{\chi(t)}{f_{\chi}(t^{1+\delta})}.    
 	\end{split}
 	\end{equation*}
 	Now since $g_\chi(t):=f_{\chi}(t^{1+\delta})$ is regularly varying of index $1/\alpha+\delta/\alpha$ we can use the Potter's bounds, see \cite[Theorem 1.5.6 (iii)]{bingham}, to ensure it is true that $g_\chi(t)\geq C t^{1/\alpha+\delta/\alpha-c},  0<c<\delta/\alpha, C\in\lbrb{0,\infty}$, and therefore
 	\begin{equation*}
 	\begin{split}
 	&\limsupi{t}	\frac{H_{\tau(t)}}{H_{t^{2+\varepsilon}}}\leq 	\limsupi{t}	\frac{\chi(t)}{\chi(t^{1+\delta})}\leq 2\limsupi{t}	\frac{\chi(t)}{f_{\chi}(t^{1+\delta})}\\
 	&\leq \frac{4}{C}\limsupi{t}	\frac{\chi(t)}{t^{\frac{1}{\alpha}+\frac{\delta}{\alpha}-c}}.    
 	\end{split}
 	\end{equation*}
 The fact that the last equals zero in turn follows from  Theorem \ref{thm:growth}\eqref{it:1} applied with $h(t)=t^{1/\alpha+\delta/\alpha-c}$ where in relation \eqref{eq:limsup} we have that
 $\int_{1}^\infty\bar{\Pi}_{\chi}\lbrb{h(t)}dt<\infty$ since $\Phi_{\chi}$ being regularly varying of index $\alpha\in\lbrb{0,1}$ at zero implies that $\bar{\Pi}_{\chi}\lbrb{h(t)}$ is regularly varying of index $-1-\delta+c\alpha<-1$ at infinity, see \cite[Chapter III.1]{bertoinb}.
 
 The second relation of \eqref{eq:Bounds} follows a similar pattern. Noting that with  $u(t)=\max\curly{s>0:\tau(s)\leq t^{2-\epsilon}}$ we have that $\tau(u(t))>\tau(t^{2-\epsilon})$ we arrive for some $\delta=\delta(\epsilon)>0$ at
 \begin{equation*}
 \begin{split}
 &\limsupi{t}	\frac{\chi(t)}{\chi(t^{1-\delta})}\leq \limsupi{t}	\frac{H_{\tau(t)}}{H_{t^{2-\varepsilon}}}.
 \end{split}
 \end{equation*}
 The arguments then proceed as in the previous case.
 \end{proof}
Next, let us consider two cases which distinguish between the scenario when $A$ is bounded or not. 
\subsection{Bounded set}
Since none of the asymptotic relations in \eqref{eq:asympRel} depends on finite time horizon we can assume that $A\subseteq\Rb^+$ (the Brownian motion would pass below $A$ for a finite period of time) and $\limi{x}\mathpzc{l}\lbrb{A\cap\lbbrbb{0,x}}=a\in\lbrb{0,\infty}$. In this case in the notation of \cite[Chapter 9]{bertoins} as $t\to\infty$
\[G(t)=\int_{0}^{t}\mathpzc{l}\lbrb{A\cap\lbbrbb{0,x}}dx\sim at\]
 and thus as $t\to\infty$
 \[D(t)=\inf\curly{s>0:G(s)>t}\sim \frac{t}a, \] 
 see \eqref{eq:G}. Then, according to \cite[Chapter 9, Corollary 9.4 (ii)]{bertoins} we have that 
\[\int_{0}^t \bar{\Pi}_{\chi}(x)dx\asymp \frac{t}{D(t)}\simi a, \]
where $\bar{\Pi}_{\chi}(x)=\int_{x}^{\infty}\Pi_{\chi}(dy)$. This means that $\Eds{\chi(1)}<\infty$ and therefore a.s. $\chi(t)=H_{\tau(t)}\simi \Eds{\chi(1)}t$. From Proposition \ref{prop:Bounds} we arrive at the following result.
\begin{coro}\label{cor:Bounds}
	If $A\subseteq\Rb^+$ and $\limi{x}\mathpzc{l}\lbrb{A\cap\lbbrbb{0,x}}=a\in\lbrb{0,\infty}$ then a.s.
	\begin{equation}\label{eq:Bounds1}
	\limi{t}\frac{H_t}{t^{\frac{1}{2}-\epsilon}}=\infty;\qquad \limi{t}\frac{H_t}{t^{\frac{1}{2}+\epsilon}}=0.
	\end{equation}
\end{coro}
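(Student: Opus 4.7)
The plan is to deduce the corollary directly from Proposition \ref{prop:Bounds} by combining the law-of-large-numbers asymptotic for $\chi$ established in the paragraph preceding the statement with a change of variables $s=t^{2\pm\varepsilon}$. I first record that under the assumption $\limi{x}\mathpzc{l}\lbrb{A\cap\lbbrbb{0,x}}=a\in\lbrb{0,\infty}$, the computation appealing to \cite[Chapter 9, Corollary 9.4 (ii)]{bertoins} recalled above the statement gives $\Eds{\chi(1)}<\infty$, so that, by the strong law of large numbers for subordinators, almost surely
\begin{equation*}
H_{\tau(t)}=\chi(t)\sim \Eds{\chi(1)}\,t\qquad\text{as }t\to\infty.
\end{equation*}
Combining this with Proposition \ref{prop:Bounds} (whose first hypothesis is now in force) yields, for any $\varepsilon>0$ small enough, almost surely
\begin{equation*}
\limi{t}\frac{H_{t^{2+\varepsilon}}}{t}=\infty,\qquad \limi{t}\frac{H_{t^{2-\varepsilon}}}{t}=0.
\end{equation*}

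Next, given $\epsilon>0$ from the statement, I would pick $\varepsilon=\varepsilon(\epsilon)>0$ so small that $1/(2+\varepsilon)\geq 1/2-\epsilon$ and $1/(2-\varepsilon)\leq 1/2+\epsilon$ (both can clearly be arranged for $\varepsilon$ sufficiently small). Setting $s=t^{2+\varepsilon}$, i.e.\ $t=s^{1/(2+\varepsilon)}$, the first display above translates into $\limi{s}H_s/s^{1/(2+\varepsilon)}=\infty$, and since $H_s\geq 0$ and $s^{1/(2+\varepsilon)-(1/2-\epsilon)}\geq 1$ for $s\geq 1$ by the choice of $\varepsilon$, one obtains
\begin{equation*}
\frac{H_s}{s^{1/2-\epsilon}}\,=\,\frac{H_s}{s^{1/(2+\varepsilon)}}\cdot s^{1/(2+\varepsilon)-(1/2-\epsilon)}\longrightarrow \infty.
\end{equation*}
Analogously, setting $s=t^{2-\varepsilon}$ converts the second display into $\limi{s}H_s/s^{1/(2-\varepsilon)}=0$, and since $s^{1/(2-\varepsilon)-(1/2+\epsilon)}\leq 1$ for $s\geq 1$ by the choice of $\varepsilon$, we conclude
\begin{equation*}
\frac{H_s}{s^{1/2+\epsilon}}\,=\,\frac{H_s}{s^{1/(2-\varepsilon)}}\cdot s^{1/(2-\varepsilon)-(1/2+\epsilon)}\longrightarrow 0.
\end{equation*}

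The main (and essentially only) obstacle is to carry out the substitution $s=t^{2\pm\varepsilon}$ rigorously: since $t\mapsto t^{2\pm\varepsilon}$ is a bijection on $[1,\infty)$ and continuous, the substitution is valid and the resulting limits are genuine limits along the full real parameter $s\to\infty$ (no monotonicity of $H$ is needed for this step, only the fact that the substitution is proper). All other pieces are immediate consequences of what has already been proved, so modulo this elementary change of variable the corollary reduces to an algebraic manipulation of exponents.
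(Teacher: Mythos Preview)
Your proof is correct and follows essentially the same approach as the paper: the paper's proof is a single sentence stating that since $H_{\tau(t)}\sim\Eds{\chi(1)}t$ one substitutes this into \eqref{eq:Bounds} and changes variables, which is precisely what you have carried out in detail. Your careful handling of the exponent comparison $1/(2\pm\varepsilon)$ versus $1/2\mp\epsilon$ and the justification of the change of variables are welcome elaborations of the paper's terse argument.
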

\begin{proof}
	Since $H_{\tau(t)}\simi\Eds{\chi(1)}t$ we can use this relation in \eqref{eq:Bounds} and change variables therein.
\end{proof}
Having established sufficiently precise asymptotic behaviour of the occupation measure the next aim is to find under what conditions $\sigma_1(H_t)$ or $\sigma_2(t-H_t)$ can be compared to $\sigma_1,\sigma_2$ at deterministic times. From Corollary \ref{cor:Bounds} we  arrive at the following result.
\begin{coro}\label{cor:Bounds1}

	It holds true that
\begin{equation}\label{eq:sigma1}
\limi{t}\frac{\sigma_1\lbrb{H_t}}{\sigma_1(t^{\frac{1}{2}+\epsilon})}=0;\quad \limi{t}\frac{\sigma_1\lbrb{H_t}}{\sigma_1(t^{\frac{1}{2}-\epsilon})}=\infty,\,\,\text{a.s.},
\end{equation}
provided there exists $\alpha\in\lbrb{0,1}$ such that for any $\epsilon_1>0$ small enough a.s.
\begin{equation}\label{eq:condi}
\begin{split}
	&\limsupi{t}\frac{\sigma_1(t)}{\sigma^{\alpha}(t)}<\infty\\
	&\liminfi{t}\frac{\sigma_1(t)}{\sigma^{\alpha+\epsilon_1}(t)}>0,
\end{split}
\end{equation} 
where $\sigma^\alpha$ stands for a suitable stable subordinator of index $\alpha \in \lbrb{0,1}$ defined on the same path space as $\sigma_1$.
\end{coro}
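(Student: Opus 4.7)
The plan is to chain together three almost-sure sandwich estimates already available in the paper: Corollary~\ref{cor:Bounds} will bound the occupation time $H_t$ from above and below by deterministic powers of $t$; hypothesis \eqref{eq:condi} will sandwich $\sigma_1$ between the two stable subordinators $\sigma^\alpha$ and $\sigma^{\alpha+\epsilon_1}$; and Corollary~\ref{cor1:growth} will give two-sided polynomial a.s.\ bounds on each of these stable subordinators. The structural fact that makes the chain work is that $\sigma_1$ is pathwise non-decreasing, being the sum of the positive jumps of $\sigma$ indexed by $\{v\le s:B_v\in A\}$ in \eqref{eq:constr}; hence monotonicity transforms any bound on $H_t$ into a bound on $\sigma_1(H_t)$.

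Concretely, I would fix $\epsilon>0$ small and apply Corollary~\ref{cor:Bounds} with $\epsilon/2$ in place of $\epsilon$ to obtain, almost surely for all $t$ large enough, $t^{1/2-\epsilon/2}\le H_t\le t^{1/2+\epsilon/2}$; monotonicity of $\sigma_1$ then yields $\sigma_1(t^{1/2-\epsilon/2})\le \sigma_1(H_t)\le \sigma_1(t^{1/2+\epsilon/2})$. Next I would pick $\epsilon_1>0$ small enough for both parts of \eqref{eq:condi} to be in force, producing constants $C_1<\infty$ and $c_1>0$ with $c_1\sigma^{\alpha+\epsilon_1}(u)\le \sigma_1(u)\le C_1\sigma^\alpha(u)$ for all $u$ large enough. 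Applying these at $u=t^{1/2\pm\epsilon/2}$ to bound $\sigma_1(H_t)$, and at $u=t^{1/2\pm\epsilon}$ to bound the deterministic denominators in \eqref{eq:sigma1}, and finally applying Corollary~\ref{cor1:growth} to each stable subordinator with an auxiliary parameter $\delta>0$, I would reach two-sided polynomial bounds of the form $\sigma_1(H_t)\le C\,t^{(1/2+\epsilon/2)(1/\alpha+\delta)}$ and $\sigma_1(t^{1/2+\epsilon})\ge c\,t^{(1/2+\epsilon)(1/(\alpha+\epsilon_1)-\delta)}$, together with symmetric estimates for the reverse direction.

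The whole argument then reduces to verifying two exponent inequalities, one for each claim in \eqref{eq:sigma1}:
\[
\bigl(\tfrac12+\tfrac{\epsilon}{2}\bigr)\bigl(\tfrac1\alpha+\delta\bigr)<\bigl(\tfrac12+\epsilon\bigr)\bigl(\tfrac1{\alpha+\epsilon_1}-\delta\bigr),\qquad \bigl(\tfrac12-\tfrac{\epsilon}{2}\bigr)\bigl(\tfrac1{\alpha+\epsilon_1}-\delta\bigr)>\bigl(\tfrac12-\epsilon\bigr)\bigl(\tfrac1\alpha+\delta\bigr).
\]
In the limit $\delta,\epsilon_1\downarrow 0$ both inequalities degenerate to a strict gap of $\epsilon/(2\alpha)>0$, so by continuity a sufficiently small choice of $\delta$ and $\epsilon_1$ (depending on $\epsilon$ and $\alpha$) makes them strict as required, completing the argument. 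The main obstacle is not conceptual but a matter of bookkeeping: one has to propagate four separate small parameters (the outer $\epsilon$, the $\epsilon/2$ slack extracted from Corollary~\ref{cor:Bounds}, the $\epsilon_1$ from \eqref{eq:condi}, and the $\delta$ from Corollary~\ref{cor1:growth}) through the chain of sandwiches and verify that the margin gained from the occupation-time estimate dominates the losses incurred at the stable-subordinator step.
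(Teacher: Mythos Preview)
Your proposal is correct and follows essentially the same approach as the paper: both arguments chain together the sandwich hypothesis \eqref{eq:condi}, the polynomial growth bounds for stable subordinators from Corollary~\ref{cor1:growth}, and the occupation-time estimates \eqref{eq:Bounds1} from Corollary~\ref{cor:Bounds}, reducing everything to an exponent comparison that is settled by continuity in the auxiliary parameters. The only cosmetic difference is the order of operations: you first bound $H_t$ by deterministic powers via monotonicity of $\sigma_1$ and then pass to stable subordinators, whereas the paper first replaces $\sigma_1$ by $\sigma^\alpha$ and $\sigma^{\alpha+\epsilon_1}$ directly in the ratio, applies Corollary~\ref{cor1:growth}, and only then invokes \eqref{eq:Bounds1} by rewriting the resulting expression as $\bigl(H_t/t^{1/2+\epsilon_5}\bigr)^{1/\alpha+\epsilon_2}$.
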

\begin{proof}
	If \eqref{eq:condi} holds true then a.s. for some constant $C\in\lbrb{0,\infty}$ depending on the path and $\epsilon_1>0$
	\begin{equation*}
	\begin{split}
	&	\limi{t}\frac{\sigma_1\lbrb{H_t}}{\sigma_1(t^{\frac{1}{2}+\epsilon})}\leq C\limsupi{t}\frac{\sigma^{\alpha}\lbrb{H_t}}{\sigma^{\alpha+\epsilon_1}\lbrb{t^{\frac{1}{2}+\epsilon}}}. 
	\end{split}
	\end{equation*}
	From Corollary \ref{cor1:growth} we conclude that for any $\epsilon_2,\epsilon_3$ positive and small enough
	\begin{equation*}
	\begin{split}
	&	\limi{t}\frac{\sigma_1\lbrb{H_t}}{\sigma_1(t^{\frac{1}{2}+\epsilon})}\leq C\limsupi{t}\frac{\lbrb{H_t}^{\frac{1}{\alpha}+\epsilon_2}}{\lbrb{t^{\frac{1}{2}+\epsilon}}^{\frac{1}{\alpha+\epsilon_1}-\epsilon_3}}. 
	\end{split}
	\end{equation*}
	Now, for fixed $\epsilon>0$ we can choose $\epsilon_i,i=1,2,3,$ so small that for given $\epsilon_5>0$ small enough
	\[\frac{\lbrb{\frac{1}{2}+\epsilon}\lbrb{\frac{1}{\alpha+\epsilon_1}-\epsilon_3}}{\lbrb{\frac{1}{\alpha}+\epsilon_2}}=\frac{1}{2}+\epsilon_5.\]
	Thererefore, from the second relation in \eqref{eq:Bounds1} we arrive at
	\begin{equation*}
	\begin{split}
	&	\limi{t}\frac{\sigma_1\lbrb{H_t}}{\sigma_1(t^{\frac{1}{2}+\epsilon})}\leq C\limsupi{t}\lbrb{\frac{H_t}{t^{\frac{1}{2}+\epsilon_5}}}^{\frac{1}{2}+\epsilon_2}=0. 
	\end{split}
	\end{equation*}
	This proves the first limit in \eqref{eq:sigma1} and the second follows in the same manner.
\end{proof}
 Here and hereafter for any stochastic process $Y=\lbrb{Y_t}_{t\geq 0}$ with paths that are a.s. right-continuous with left limits  we use $\lbrb{\Delta Y}_{t\geq 0}:=\lbrb{Y_t-Y_{t-}}_{t\geq 0}$ for the jump process related to $Y$. We need the following elementary result.
\begin{prop}\label{prop:Close}
	Let $\sigma(t)=V(t)+Y(t),$ where $V,Y$ are two  non-decreasing processes. If $L(t)$ is again the passage time of $\sigma$ across $t>0$ and $V(L(t))/\sigma(L(t))\to 1$ in distribution then for every $\eta\in\lbrb{0,1}$ 
	\begin{align}\label{eq:Close}
	&\limi{t}\Pbb{\curly{\Delta Y(L(t))=\Delta \sigma(L(t))}\cup\curly{ \sigma(L(t))=\sigma(L(t)-)};V(L(t)-)\leq\lbrb{1-\eta}t} \notag \\
	& =0.
	\end{align}
	If $V(L(t))/\sigma(L(t))\to 1$ almost surely then almost surely, for any $\eta\in\lbrb{0,1}$,
	\begin{equation}\label{eq:Close11}
	\begin{split}
	&\sup\curly{t>0:V(L(t)-)\leq (1-\eta)t;\Delta Y(L(t))=\Delta \sigma(L(t))\cup \sigma(L(t))=\sigma(L(t)-)}\\
	&<\infty.
	\end{split}
	\end{equation}
\end{prop}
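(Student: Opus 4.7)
The plan is to reduce the event under consideration to the inequality
$V(L(t))/\sigma(L(t)) \leq 1-\eta$, which the hypothesis rules out asymptotically.

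\textbf{Step 1 (jump accounting).} Since $\sigma = V+Y$ with $V, Y$ non-decreasing and right-continuous with left limits, at every time point one has $\Delta \sigma = \Delta V + \Delta Y$ with $\Delta V, \Delta Y \geq 0$. On the event $\{\Delta Y(L(t)) = \Delta \sigma(L(t))\}$ this forces $\Delta V(L(t)) = 0$, so $V(L(t)) = V(L(t)-)$. On the event $\{\sigma(L(t)) = \sigma(L(t)-)\}$ one has $\Delta \sigma(L(t)) = 0$, hence both $\Delta V(L(t)) = 0$ and $\Delta Y(L(t)) = 0$, so again $V(L(t)) = V(L(t)-)$. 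Thus on the union appearing in \eqref{eq:Close}, $V(L(t)) = V(L(t)-) \leq (1-\eta)t$.

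\textbf{Step 2 (first-passage bound).} By definition of $L(t)$ as the first-passage time of $\sigma$ above level $t$ and the right-continuity of $\sigma$, one always has $\sigma(L(t)) \geq t$. Combining this with Step 1, on the event in question we obtain
\begin{equation*}
\frac{V(L(t))}{\sigma(L(t))} \leq \frac{(1-\eta) t}{\sigma(L(t))} \leq 1-\eta.
\end{equation*}

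\textbf{Step 3 (applying the hypothesis).} Therefore the probability in \eqref{eq:Close} is bounded above by $\Pbb{V(L(t))/\sigma(L(t)) \leq 1-\eta}$. Since the hypothesis $V(L(t))/\sigma(L(t)) \to 1$ in distribution means convergence to a constant, this is equivalent to convergence in probability, and the bound tends to $0$ as $t \to \infty$, proving \eqref{eq:Close}. For \eqref{eq:Close11}, almost sure convergence yields that for almost every $\omega$ there exists $t_0(\omega)$ such that for all $t \geq t_0(\omega)$ one has $V(L(t))/\sigma(L(t)) > 1-\eta$; by Steps 1--2 the event appearing in the supremum is then empty for $t \geq t_0(\omega)$, hence the supremum is finite a.s.

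There is no real obstacle here; the only point requiring mild care is the unified treatment of the two events in Step 1, where one must note separately that $\Delta V(L(t)) = 0$ in both cases so as to equate $V(L(t))$ with $V(L(t)-)$ before invoking the assumption $V(L(t)-) \leq (1-\eta)t$.
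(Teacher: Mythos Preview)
Your proof is correct and follows essentially the same approach as the paper: observe that on the event in question $V(L(t))=V(L(t)-)$, combine with $\sigma(L(t))\geq t$ to obtain $V(L(t))/\sigma(L(t))\leq 1-\eta$, and then invoke the hypothesis. If anything, your Step~1 is slightly more explicit than the paper's in handling the two sub-events separately.
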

\begin{proof} The proof that follows is rather trivial. We observe that since on the event $\curly{\Delta Y(L(t))=\Delta \sigma(L(t))}\cup\curly{\sigma(L(t))=\sigma(L(t)-)}$ it is true that $V(L(t))=V(L(t)-)$ then
	\begin{equation}\label{eq:Close1}
	\begin{split}
	&\Pbb{\curly{\Delta Y(L(t))=\Delta \sigma(L(t))}\cup\curly{\sigma(L(t))=\sigma(L(t)-)};V(L(t))\leq\lbrb{1-\eta}t}\\
	&\leq \Pbb{V(L(t))\leq\lbrb{1-\eta}t}\\
	&\leq \Pbb{\frac{V(L(t))}{\sigma(L(t))}\leq 1-\eta}.
	\end{split}
	\end{equation}
	The result now follows from the assumption that $V(L(t))/\sigma(L(t))\to 1$ in distribution. Relation \eqref{eq:Close11} follows from the fact that on  \[\curly{V(L(t)-)\leq (1-\eta)t;\curly{\Delta Y(L(t))=\Delta \sigma(L(t))}\cup\curly{\sigma(L(t))=\sigma(L(t)-)}}\] we have that 
	\[\frac{V(L(t)-)}{\sigma(L(t))}=\frac{V(L(t))}{\sigma(L(t))}\leq \frac{\lbrb{1-\eta}t}{t}=1-\eta\]
	which cannot happen for arbitrary large $t$ on the event $\curly{\limi{t}\frac{V(t)}{\sigma(t)}=1},$ which is of probability one.
\end{proof}

\begin{os}
All subsequent results are stated under the assumption $X_0=0$ a.s.. However, as $X(t)=B(L(t))$ and $B$ is recurrent all these limit results are clearly valid with  $X_0=x$ a.s. for some $x\in\Rb$.
\end{os}
We start with a simple but illuminating example, which covers the case when $\alpha$ takes only two values.
\begin{lem}\label{lem:twoLevels}
	Let $\alpha(x)=\alpha_1,\,x\in A\subseteq \Rb^+$ and $\alpha(x)=\alpha_2,\,x\in\Rb\setminus A$. Let furthermore $\alpha_1,\alpha_2\in \lbrb{0,1}$ and $A$ bounded such that $0<\mathpzc{l}(A), \mathpzc{l}(\partial A)=0.$ Then if $\alpha_2>2\alpha_1$ we have that
	\begin{equation}\label{eq:occupation}
		\limi{t}\frac{\int_{0}^{t}\ind{X(s)\in A}ds}{t}=1,\,\text{a.s.},
	\end{equation}
	and if $\alpha_2<2\alpha_1$ then 
	\begin{equation}\label{eq:occupation1}
	\limi{t}\frac{\int_{0}^{t}\ind{X(s)\in A\cup [-K,K]}ds}{t}=0,\,\text{a.s.}
	\end{equation}
	for any $K\geq 0$.
\end{lem}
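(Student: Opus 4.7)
The plan is to exploit the decomposition \eqref{eq:decom}, which in this two-level setting reads $\sigma(s)=\sigma_1(H_s)+\sigma_2(s-H_s)$ with $\sigma_1$ and $\sigma_2$ \emph{independent} stable subordinators of indices $\alpha_1$ and $\alpha_2$; independence and the identification of the marginal laws is precisely what \eqref{eq:constr} gives since $\alpha\equiv\alpha_1$ on $A$ and $\alpha\equiv\alpha_2$ on $A^c$. Set $V(s):=\sigma_1(H_s)$ and $Y(s):=\sigma_2(s-H_s)$, so that $\sigma=V+Y$. A pathwise bookkeeping of the sojourn intervals of $X(\cdot)$ yields the identity
\begin{equation*}
\int_0^t \ind{X(s)\in A}\,ds \,=\, V(L(t)-)+\lbrb{t-\sigma(L(t)-)}\ind{B(L(t))\in A},
\end{equation*}
and in particular $V(L(t)-)\leq \int_0^t \ind{X(s)\in A}\,ds\leq V(L(t))$.

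For \eqref{eq:occupation}, the hypothesis $\alpha_2>2\alpha_1$ is equivalent to $1/(2\alpha_1)>1/\alpha_2$. Corollary \ref{cor:Bounds1} applied to the $\alpha_1$-stable subordinator $\sigma_1$, combined with the stable growth bounds from Corollary \ref{cor1:growth}, gives, for any small $\varepsilon>0$, the almost-sure estimates $V(s)\geq s^{\frac{1}{2\alpha_1}-O(\varepsilon)}$ and $Y(s)\leq \sigma_2(s)\leq s^{\frac{1}{\alpha_2}+\varepsilon}$ for all sufficiently large $s$. Hence $Y(s)/V(s)\to 0$ pathwise, and since $L(t)\to\infty$ one deduces $V(L(t))/\sigma(L(t))\to 1$ a.s.\ so Proposition \ref{prop:Close} (with $V$ in its role and $Y$ as the smaller term) applies. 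This yields, for any fixed $\eta\in(0,1)$, a random $t_0$ after which on $\curly{\Delta Y(L(t))=\Delta\sigma(L(t))}\cup\curly{\sigma(L(t))=\sigma(L(t)-)}$ one has $V(L(t)-)>(1-\eta)t$, whence the occupation integral exceeds $(1-\eta)t$. On the complementary event (a jump in $A$ at $L(t)$) the displayed identity reduces to $t-Y(L(t)-)$, so it suffices to show $Y(L(t))=o(t)$; inverting the lower bound on $V$ via $\sigma(L(t)-)\leq t$ forces $L(t)\leq t^{2\alpha_1+O(\varepsilon)}$, hence $Y(L(t))\leq L(t)^{1/\alpha_2+\varepsilon}\leq t^{2\alpha_1/\alpha_2+O(\varepsilon)}=o(t)$. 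Letting $\eta\downarrow 0$ completes \eqref{eq:occupation}.

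For \eqref{eq:occupation1} the same scheme is run with the roles of $V$ and $Y$ exchanged: $\alpha_2<2\alpha_1$ gives $V/\sigma\to 0$, the sandwich bounds $\int_0^t\ind{X(s)\in A}\,ds\leq V(L(t))\leq L(t)^{1/(2\alpha_1)+O(\varepsilon)}$, and the analogous upper bound $L(t)\leq t^{\alpha_2+O(\varepsilon)}$ (from $\sigma\geq Y\geq s^{1/\alpha_2-\varepsilon}$) produces $t^{\alpha_2/(2\alpha_1)+O(\varepsilon)}=o(t)$. For the $\lbbrbb{-K,K}$ contribution I would repeat the construction with the bounded set $\lbbrbb{-K,K}\setminus A$ in place of $A$: the jumps of $\sigma$ that occur while $B\in\lbbrbb{-K,K}\setminus A$ assemble into an $\alpha_2$-stable subordinator $\sigma_3$ time-changed by $\widetilde H_s:=\int_0^s\ind{B_v\in\lbbrbb{-K,K}\setminus A}\,dv$, to which Corollary \ref{cor:Bounds} applies verbatim (bounded set, Lebesgue-null boundary), yielding $\sigma_3(\widetilde H_{L(t)})\leq L(t)^{1/(2\alpha_2)+O(\varepsilon)}\leq t^{1/2+O(\varepsilon)}=o(t)$.

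The main technical subtlety, and the reason Proposition \ref{prop:Close} is needed rather than a naive ``$\sigma(L(t))\sim t$'' argument, is the last partial sojourn $t-\sigma(L(t)-)$: it is bounded only by $\Delta\sigma(L(t))$, which for a stable subordinator can be of order $t$ due to heavy tails. Pairing Proposition \ref{prop:Close}, which rules out the coincidence of a wrong-region last jump with a large overshoot, with the deterministic growth estimates of Corollary \ref{cor1:growth} and Corollary \ref{cor:Bounds1}, is exactly what makes the pathwise bookkeeping tight enough to separate the two regimes $\alpha_2>2\alpha_1$ and $\alpha_2<2\alpha_1$.
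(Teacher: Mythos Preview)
Your proposal is correct and follows the paper's proof essentially step for step: the same decomposition $\sigma=\sigma_1(H_\cdot)+\sigma_2(\cdot-H_\cdot)$, the same growth bounds from Corollaries \ref{cor:Bounds1} and \ref{cor1:growth} leading to $\sigma_1(H_{L(t)})/\sigma(L(t))\to 1$, and the same appeal to Proposition \ref{prop:Close} to handle the last partial sojourn. Your route to $Y(L(t))=o(t)$ on the ``jump in $A$'' event (bound $L(t)\le t^{2\alpha_1+O(\varepsilon)}$ first, then $Y(L(t))\le L(t)^{1/\alpha_2+\varepsilon}$) and your direct sandwich $\int_0^t\ind{X\in A}\,ds\le V(L(t))$ for the case $\alpha_2<2\alpha_1$ are mild streamlinings of the paper's arguments (which instead establish the left-limit ratio $\sigma_1(H_{L(t)}-)/\sigma(L(t)-)\to 1$ by contradiction and then mirror the full first-case argument), but the strategy is identical.
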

\begin{proof}
 First, note that $\sigma_1$ and  $\sigma_2$ defined in \eqref{eq:decom} and \eqref{eq:constr} are respectively stable subordinators of index $\alpha_1$ and  $\alpha_2$ evaluated at an independent of them time $H_t$. Then the conditions \eqref{eq:condi} of Corollary \ref{cor:Bounds1} are satisfied and \eqref{eq:sigma1} together with  Corollary \ref{cor1:growth} translates to 
 \begin{equation}\label{eq:sigma1_1}
 \limi{t}\frac{\sigma_1\lbrb{H_t}}{t^{\frac{1}{2\alpha_1}+\epsilon}}=0;\quad \limi{t}\frac{\sigma_1\lbrb{H_t}}{t^{\frac{1}{2\alpha_1}-\epsilon}}=\infty,\,\,\text{a.s.}
 \end{equation}
 for any $\epsilon>0$ small enough.
 Also, we get in the same fashion that
 \begin{equation}\label{eq:sigma1_2}
 \limi{t}\frac{\sigma_2\lbrb{t-H_t}}{t^{\frac{1}{\alpha_2}+\epsilon}}=0;\quad \limi{t}\frac{\sigma_2\lbrb{t-H_t}}{t^{\frac{1}{\alpha_2}-\epsilon}}=\infty,\,\,\text{a.s.}
 \end{equation}
  If $\alpha_2>2\alpha_1$ then from the second relation of \eqref{eq:sigma1_1} and  the first relation of \eqref{eq:sigma1_2} we get that almost surely
  \begin{equation}\label{eq:comparable}
  	\limi{t}\frac{\sigma_1\lbrb{H_t}}{\sigma(t)}=1,
  \end{equation}
  see \eqref{eq:decom} for the definition of $\sigma$. Recall that in this setting the underlying Markov process in the definition of $X(t-)=M(L(t)-)$, see \eqref{defx}, is the Brownian motion $B$ and $X(t-)=B(L(t))$ from the continuity of $B$. Now since $\mathpzc{l}(\partial A)=0$  we have that 
  \[\Pbb{\exists t\geq 0:\sigma(t)-\sigma(t-)>0; B(t) \in \partial A}=0.\]
 This is true since first almost surely it is true that $\mathpzc{l}\lbrb{\curly{t\geq 0:\, B(t) \in \partial A}}=0$, second almost surely then it holds that
  \[\mathpzc{l}\lbrb{\lbbrbb{0,t}\setminus\curly{t\geq 0:\, B(t) \in \partial A}}=t,\]
 third from \eqref{eq:sigma} and $\sigma$ being a composition of two stable subordinators we have that
  \begin{align}\label{eq:says}
  \mathds{E}\left[ e^{-\lambda \sigma(y)} \mid B(w), w \leq y \right] \, = \, e^{ -\int_0^\infty \l 1-e^{-\lambda s} \r \lbrb{\frac{C_1}{s^{\alpha_1+1}}H_t+\frac{C_2}{s^{\alpha_2+1} }(t-H_t)}ds},
  \end{align}
 and forth relation \eqref{eq:says} implies that since $\curly{t\geq 0:\, B(t) \in \partial A}$ is independent of $\sigma$ and of zero measure then the probability of $\sigma$ jumping at times in this set is zero.
  However, from \eqref{eq:comparable} we have that
   \begin{equation}\label{eq:cond4.6}
   \begin{split}
   &\limi{t}\frac{\sigma_1\lbrb{H_{L(t)}}}{\sigma(L(t))}=1;\quad \limi{t}\frac{\sigma_1\lbrb{H_{L(t)}-}}{\sigma(L(t)-)}=1;  \text{ a.s.}.
   \end{split}
   \end{equation}
 To check the latter note that $\sigma_1(H_t)\leq\sigma(t),t>0$. Assume that there is $\epsilon>0$ and a set $\widetilde{A}$ of positive probability such that on $\widetilde{A}$
  \begin{equation}\label{eq:contrad}
  \begin{split}
  &\limsupi{t}\frac{\sigma_1\lbrb{H_{L(t)}-}}{\sigma(L(t)-)}\leq 1-\epsilon.
  \end{split}
  \end{equation}
  Clearly we can choose $t_0,t_1>0$ and event $\bar{A}$ such that
  \begin{equation*}
  \begin{split}
  &	P(\bar{A})=P\lbrb{\forall t\geq t_0:\frac{\sigma_1\lbrb{H_t}}{\sigma(t)}\geq 1-\frac{\epsilon}{100};t\geq t_1:L(t)\geq t_0+1}\geq 1-\frac12 P(\widetilde{A}).   
  \end{split}
  \end{equation*}
  Then $P(\widetilde{A}\cap \bar{A})>0$ and we work with trajectories in $\widetilde{A}\cap \bar{A}$.  Then for any $\eta\in\lbrb{0,1}$ we have that for any $t>t_1$
  \begin{equation*}
  \begin{split}
  	1-\frac{\epsilon}{100}	    &\leq\frac{\sigma_1\lbrb{H_{L(t)-\eta}}}{\sigma(L(t)-\eta)}\leq  \frac{\sigma_1\lbrb{H_{L(t)-}}}{\sigma(L(t)-\eta)}\\
  &= \frac{\sigma_1\lbrb{H_{L(t)-}}}{\sigma(L(t)-)}\frac{\sigma(L(t)-)}{\sigma(L(t)-\eta)}\leq \lbrb{1-\epsilon}\frac{\sigma(L(t)-)}{\sigma(L(t)-\eta)}.
  \end{split}
  \end{equation*}
  Setting $\eta\to 0$ and using that $\limo{\eta}\sigma(L(t)-\eta)=\sigma(L(t)-)$ we arrive at a contradiction.
  Next, let 
  \[A_t=\curly{\Delta \sigma(L(t))=\Delta \sigma_1(H_{L(t)})}.\]
  Then, clearly
  \[\mathpzc{l}\lbrb{s\leq t:X(s)\in A}=\sigma_1(H_{L(t)}-)+\lbrb{t-\sigma(L(t)-)}\ind{A_t}\]
 and on $A_t$ 
\[\mathpzc{l}\lbrb{s\leq t:X(s)\in A}=t-\sigma_2\lbrb{\lbrb{t-H_{L(t)}}-}.\]  
  Fix $\eta\in\lbrb{0,1}$ and set
  \[A_{t,\eta}=\curly{\sigma_1(H_{L(t)}-)\leq\lbrb{1-\eta}t}.\]
Since  $\sigma(L(t))=\sigma_1\lbrb{H_{L(t)}}+\sigma_2\lbrb{L(t)-H_{L(t)}}$ and \eqref{eq:cond4.6} holds true then the conditions of Proposition \ref{prop:Close} are satisfied in the almost surely sense  and we have from \eqref{eq:Close11} that almost surely there exists $t_0$ depending on the path and $\eta$ such that $A_{t,\eta}\cap A^c_t=\emptyset$ for all $t\geq t_0$. This leads to
  \begin{equation*}
  \begin{split}
  &\limi{t}\frac{\mathpzc{l}\lbrb{s\leq t:X(s)\in A}}{t}\\
  &=\limi{t}\lbrb{\frac{\mathpzc{l}\lbrb{s\leq t:X(s)\in A}\ind{A_{t,\eta}}}{t}+\frac{\mathpzc{l}\lbrb{s\leq t:X(s)\in A}\ind{A^c_{t,\eta}}}{t}}\\
  &\geq \limi{t}\lbrb{\frac{\lbrb{\sigma_1(H_{L(t)}-)+\lbrb{t-\sigma(L(t)-)}\ind{A_t}}\ind{A_{t,\eta}}}{t}+\frac{\sigma_1(H_{L(t)}-)\ind{A^c_{t,\eta}}}{t}}\\
  &\geq \limi{t}\lbrb{\frac{\lbrb{\sigma_1(H_{L(t)}-)+\lbrb{t-\sigma(L(t)-)}\ind{A_t}}\ind{A_{t,\eta}}}{t}+\lbrb{1-\eta}\ind{A^c_{t,\eta}}}\\
  &=\limi{t}\lbrb{\frac{t-\sigma_2\lbrb{\lbrb{t-H_{L(t)}}-}}{t}\ind{A_{t,\eta}}+\lbrb{1-\eta}\ind{A^c_{t,\eta}}}\\
  &\geq\limi{t}\lbrb{1-\eta}\ind{A_{t,\eta}}+\lbrb{1-\eta}\ind{A^c_{t,\eta}}=\lbrb{1-\eta},
  \end{split}
  \end{equation*}
  where the very last inequality follows from fact that a.s.
  \[1=\limi{t}\frac{\sigma_1(H_{L(t)}-)}{\sigma(L(t)-)}=\limi{t}\frac{\sigma_1(H_{L(t)}-)}{\sigma_1(H_{L(t)}-)+\sigma_2\lbrb{\lbrb{t-H_{L(t)}}-}}\]
  and hence \[\limi{t}\frac{\sigma_2\lbrb{\lbrb{t-H_{L(t)}}-}}t\leq \limi{t}\frac{\sigma_2\lbrb{\lbrb{t-H_{L(t)}}-}}{\sigma_1(H_{L(t)}-)}=0.\]
  Since $\eta$ is arbitrary we get that
  \[\limi{t}\frac{\mathpzc{l}\lbrb{s\leq t:X(s)\in A}}{t}=1,\]
  which proves \eqref{eq:occupation}. 
  
  Let next $2\alpha_1>\alpha_2$. Then using exactly the same arguments and \eqref{eq:sigma1_2} we arrive at
   \begin{equation}\label{eq:comparable11}
  \limi{t}\frac{\sigma_2\lbrb{t-H_t}}{\sigma(t)}=1
  \end{equation}
  and almost surely
  \[\limi{t}\frac{\mathpzc{l}\lbrb{s\leq t:X(s)\in A^c}}{t}=1.\]
  However, if, for some $K>0$, we set $H_t(1), H_t(2)$ the occupation measures of the Brownian motion  of $\lbbrbb{0,K}\setminus A$ and $\lbbrb{-K,0}$, then 
  \[\sigma_2\lbrb{t-H_t}=\sigma_2\lbrb{t-H_t-H_t(1)- H_t(2)}+\sigma_2\lbrb{H_t(1)}+\sigma_2\lbrb{H_t(2)}.\]
  Clearly, the same reasoning for $H_t$ applies to $H_t(1), H_t(2)$ and yields through utilization of Corollaries \ref{cor:Bounds}, \ref{cor:Bounds1} to 
  \begin{equation}\label{eq:sigma1_22}
  \limi{t}\frac{\sigma_2\lbrb{H_t(i)}}{t^{\frac{1}{2\alpha_2}+\epsilon}}=0;\quad \limi{t}\frac{\sigma_2\lbrb{H_t(i)}}{t^{\frac{1}{2\alpha_2}-\epsilon}}=\infty,\,\,\text{a.s.}, i=1,2.
  \end{equation}
  This  allows us to deduct that \eqref{eq:comparable11} is further augmented to
  \begin{equation*}
  \limi{t}\frac{\sigma_2\lbrb{t-H_t-H_t(1)-H_t(2)}}{\sigma(t)}=1,\,\text{a.s.}.
  \end{equation*}
  As before we can again deduct that
  \[\limi{t}\frac{\mathpzc{l}\lbrb{s\leq t:X(s)\in A^c\cup (-\infty, -K)\cup (K,\infty)}}{t}=1.\]
  This concludes the proof.
\end{proof}

Let us now assume that $\alpha:\Rb\mapsto\lbrb{0,1}$, $\sup_{x \in \Rb}\alpha(x)<1$ and set $\alpha^*=\min_{x\in\Rb}\alpha(x)>0$. Assume further that there exists $\beta$ small enough such that $A_\beta=\curly{x\in\Rb:\alpha(x)<\alpha^*+\beta<1}$ is bounded and satisfies $0<\mathpzc{l}\lbrb{A_\beta}<\infty$ and for any $0<\epsilon<\beta, \mathpzc{l}\lbrb{A_\epsilon}>0$. Also let $\mathpzc{l}\lbrb{\partial A_\epsilon}=0$ for all $\epsilon\leq\beta$. Without loss of generality let $A_\beta\subseteq\Rb^+$. Denote by $H_s(\beta)$ the occupation measure of $A_\beta$ as above and $\sigma_1(H_s(\beta))$ the process in the decomposition \eqref{eq:decom}. Then we have the result.
\begin{lem}\label{lem:GenBounds}
	With the conditions on $A_\beta$ and $\alpha$ above we have that for any $\beta>\varepsilon>0$ small enough
	\begin{equation}\label{eq:GenBounds1}
		\limi{t}\frac{\sigma_1(H_s(\beta))}{\lbrb{H_s(\beta)}^{\frac{1}{\alpha^*}+\varepsilon}}=0
	\end{equation}
	and 
	\begin{equation}\label{eq:GenBounds2}
	\limi{t}\frac{\sigma_1(H_s(\beta))}{\lbrb{H_s(\beta)}^{\frac{1}{\alpha^*+\varepsilon}-\varepsilon}}=\infty.
	\end{equation}
\end{lem}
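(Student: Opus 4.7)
Plan:

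The strategy is to sandwich $\sigma_1\lbrb{H_s(\beta)}$ between two subordinators whose growth rates are (up to $\varepsilon$-perturbation) those of stable subordinators of index $\alpha^*$ and $\alpha^*+\varepsilon$, and then invoke Corollary \ref{cor1:growth} or Theorem \ref{thm:growth}. The key uniform comparison on $A_\beta$ follows from $\alpha(x)\in[\alpha^*,\alpha^*+\beta)$ and the monotonicity of $\alpha\mapsto\Gamma(1-\alpha)$ on $(0,1)$: for every $s\geq 1$ and every $x\in A_\beta$ one has $\bar{\nu}(s,x)\leq s^{-\alpha^*}/\Gamma(1-\alpha^*)$, while for every $x\in A_\varepsilon\subseteq A_\beta$ (with $\mathpzc{l}(A_\varepsilon)>0$ by hypothesis) one has the reverse inequality with $\alpha^*+\varepsilon$ in place of $\alpha^*$. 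Both comparisons fail for $s<1$, so large and small jumps must be treated separately. Throughout I use that by Corollary \ref{cor:Bounds} both $H_s(\beta)$ and $H_s(\varepsilon)$ are of order $s^{1/2\pm o(1)}$ almost surely.

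For the upper bound \eqref{eq:GenBounds1}, split $\sigma_1=\sigma_1^{\mathrm{s}}+\sigma_1^{\mathrm{l}}$ according to whether the underlying jump of $\sigma$ has size $\leq 1$ or $>1$. The small-jump part satisfies $\sigma_1^{\mathrm{s}}\lbrb{H_s(\beta)}\leq C\cdot H_s(\beta)$ almost surely by a standard SLLN-type bound, since $\int_0^1 s\,\nu(ds,x)$ is bounded uniformly in $x\in A_\beta$; this is $\so{H_s(\beta)^{1/\alpha^*+\varepsilon}}$ because $1/\alpha^*>1$. For the large-jump part, the uniform tail bound allows, via a Poisson superposition argument in the spirit of Lemma \ref{lem:ontinuity} but with additional Poisson marks added rather than thinned out, to construct on an enlarged probability space a stable subordinator $\bar{\sigma}^{\alpha^*}$ of index $\alpha^*$ such that $\sigma_1^{\mathrm{l}}\lbrb{H_s(\beta)}\leq\bar{\sigma}^{\alpha^*}\lbrb{H_s(\beta)}$ pathwise. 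Corollary \ref{cor1:growth} applied to $\bar{\sigma}^{\alpha^*}$, together with $H_s(\beta)\to\infty$ a.s.\ (from Brownian recurrence and $\mathpzc{l}(A_\beta)>0$), then yields \eqref{eq:GenBounds1}.

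For the lower bound \eqref{eq:GenBounds2}, exploit the sub-region $A_\varepsilon\subseteq A_\beta$. Define $\tilde{\sigma}_\varepsilon\lbrb{H_s(\varepsilon)}:=\sum_{v\leq s}\lbrb{\sigma(v)-\sigma(v-)}\ind{B_v\in A_\varepsilon}$, so that $\sigma_1\lbrb{H_s(\beta)}\geq\tilde{\sigma}_\varepsilon\lbrb{H_s(\varepsilon)}$ pathwise. Taking the truncated envelope $\bar{u}(s):=s^{-(\alpha^*+\varepsilon)}\Gamma(1-\alpha^*-\varepsilon)^{-1}\ind{s>1}$, which satisfies $\bar{u}(s)\leq\bar{\nu}(s,x)$ for all $s>0$ and all $x\in A_\varepsilon$, and running the dyadic Bernoulli-thinning construction of Lemma \ref{lem:ontinuity} verbatim with this $\bar{u}$, one obtains a subordinator $\underline{\sigma}^{\alpha^*+\varepsilon}$ with Laplace exponent $\sim\lambda^{\alpha^*+\varepsilon}$ at zero (hence regularly varying of index $\alpha^*+\varepsilon$) and such that $\underline{\sigma}^{\alpha^*+\varepsilon}\lbrb{H_s(\varepsilon)}\leq\tilde{\sigma}_\varepsilon\lbrb{H_s(\varepsilon)}$ a.s. Theorem \ref{thm:growth}\eqref{it:2} combined with Potter's bounds then gives $\liminfi{s}\underline{\sigma}^{\alpha^*+\varepsilon}\lbrb{H_s(\varepsilon)}/H_s(\varepsilon)^{1/(\alpha^*+\varepsilon)-\varepsilon'}=\infty$ for any $\varepsilon'>0$, and the relation $H_s(\varepsilon)\asymp H_s(\beta)^{1+o(1)}$ (from Corollary \ref{cor:Bounds} applied separately to $A_\beta$ and $A_\varepsilon$) transfers the bound to $H_s(\beta)$; choosing $\varepsilon'$ sufficiently small relative to $\varepsilon$ delivers \eqref{eq:GenBounds2}.

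The main technical work is the upper-bound coupling. The uniform tail inequality $\bar{\nu}(s,x)\leq s^{-\alpha^*}/\Gamma(1-\alpha^*)$ only holds for $s\geq 1$, so the accumulation of small jumps forces the large/small split and requires a mirror image of the construction in Lemma \ref{lem:ontinuity} (augmentation of the jump point process rather than thinning). The lower bound is comparatively routine because, after truncating to large jumps on the sub-region $A_\varepsilon$ where the tail inequality goes the right way, the Bernoulli-thinning machinery of Lemma \ref{lem:ontinuity} can be reused without modification.
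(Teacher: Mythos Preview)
Your proposal is correct and follows essentially the same route as the paper: a pathwise sandwich of $\sigma_1(H_s(\beta))$ between subordinators built by Poisson augmentation (upper bound) and Bernoulli thinning (lower bound), followed by Corollary~\ref{cor1:growth}. The only differences are cosmetic. For the upper bound the paper augments $\sigma_1$ by two independent auxiliary processes so that the total intensity on $A_\beta$ becomes exactly $h\,s^{-\alpha^*-1}+h\,s^{-\alpha^*-\beta-1}$, i.e.\ the sum of an $\alpha^*$-stable and an $(\alpha^*+\beta)$-stable subordinator at time $c\,H_s(\beta)$; the second one absorbs the small jumps and is then shown to be negligible via Corollary~\ref{cor1:growth}. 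Your large/small split with an SLLN bound on the small-jump piece achieves the same thing and is arguably cleaner, though you should note that the linear bound on $\sigma_1^{\mathrm s}$ really comes from first dominating it (by the same augmentation trick) by a genuine subordinator with L\'evy measure $\sup_{x\in A_\beta}\nu(\cdot,x)|_{(0,1]}$ and finite mean, to which the SLLN applies. For the lower bound the paper also thins to a truncated $(\alpha^*+\varepsilon)$-stable, and its restriction to $A_\varepsilon$ is implicit (the thinning parameter is only guaranteed to lie in $[0,1]$ for $B_s\in A_\varepsilon$); your version, which makes the passage through $H_s(\varepsilon)$ explicit and then transfers to $H_s(\beta)$ via Corollary~\ref{cor:Bounds}, is a more transparent rendering of the same argument.
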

\begin{proof} 
	Recall that the intensity measure of $\sigma$ is in general
	\begin{equation}\label{eq:intensity}
		\nu\lbrb{ds,x}=\frac{\alpha(x)}{\Gamma\lbrb{1-\alpha(x)}}\frac{ds}{s^{\alpha(x)+1}}= v_x(s)ds,\,s\in\Rb^+,\,x\in \Rb.
	\end{equation}
	Set
	\begin{equation}\label{eq:cond}
	0<q_1:=\inf_{x\in A_\beta}\frac{\alpha(x)}{\Gamma(1-\alpha(x))}\leq \sup_{x\in A_\beta}\frac{\alpha(x)}{\Gamma(1-\alpha(x))}:=q_2<\infty.
	\end{equation}
	From \eqref{eq:intensity} the density of the intensity measure of $\sigma$ can be estimated uniformly on $\lbrb{s,x}\in\Rb^+\times \Rb$ as 
		\begin{equation}\label{eq:b1}
		v_x(s)\leq q_2 \lbrb{s^{-\alpha^*-\beta}+s^{-\alpha^*}}
		\end{equation}
	and for $\varepsilon<\beta$ on $\lbrb{s,x}\in\Rb^+\times A_\varepsilon$ (recall that $A_\varepsilon=\curly{x\in\Rb:\alpha(x)<\alpha^*+\varepsilon<1}$)
	\begin{equation}\label{eq:b2}v_x(s)\geq q_1 s^{-\alpha^*-\varepsilon}\ind{s\geq 1}.
	\end{equation}
	Therefore, if $\sigma^\eta$ is a stable subordinator of index $\eta$ assume that we can construct pathwise stable subordinators of index $\alpha^*,\alpha^*+\varepsilon,\alpha^*+\beta$ such that
	\begin{equation}\label{eq:stochBounds}
		\begin{split}
		&\sigma_1(H_s(\beta))\leq \sigma^{\alpha^{*}}(c_1H_s\lbrb{\beta})+\sigma^{\alpha^{*}+\beta}(c_1H_s\lbrb{\beta}),\\
		&\sigma_1(H_s(\beta))\geq \sigma^{\alpha^{*}+\varepsilon}(c_2H_s\lbrb{\beta})-\sum_{v\leq c_2H_s\lbrb{\beta}}\Delta \sigma^{\alpha^{*}+\varepsilon}_v\ind{\Delta \sigma^{\alpha^{*}+\varepsilon}_v\leq 1}
	    \end{split}
	\end{equation}
	where $c_1, c_2>0$ and in the second inequality we have truncated the jumps less or equal to $1$.
	However, from an easy application of Corollary \ref{cor1:growth} to both $\sigma^{\alpha^{*}}, \sigma^{\alpha^{*}+\beta}$ at infinity we get almost surely that for any $\epsilon>0$ as small as we wish \begin{equation}\label{eq:stochBounds4}
	\begin{split}
	&\liminfi{s}\frac{\sigma^{\alpha^{*}}(c_1H_s\lbrb{\beta})}{\lbrb{c_1H_s(\beta)}^{\frac{1}{\alpha^*}-\epsilon}}=\infty;\quad \limsupi{s}\frac{\sigma^{\alpha^{*}+\beta}(c_1H_s\lbrb{\beta})}{\lbrb{c_1H_s(\beta)}^{\frac{1}{\alpha^*+\beta}+\epsilon}}=0.
	\end{split}
	\end{equation}
	Henceforth as long as $\frac1{\alpha^*+\beta}+2\epsilon<\frac{1}{\alpha^*}$ then $\sigma^{\alpha^{*}}\lbrb{c_1H_s\lbrb{\beta}}$ dominates the right-hand side of the first inequality in \eqref{eq:stochBounds}.
	Also $\sigma^{\alpha^{*}+\varepsilon}(c_2H_s\lbrb{\beta})$ dominates the second term in the right-hand side of the second inequality in \eqref{eq:stochBounds} since the small jumps have all exponential moments and have slow growth. Therefore, almost surely,
	\begin{equation*}
	\begin{split}
	&\limsupi{s}\frac{\sigma_1(H_s(\beta)) }{\sigma^{\alpha^{*}}(c_1H_s\lbrb{\beta})}	\leq 1	  \text{ and }   \limsupi{s}\frac{\sigma_1(H_s(\beta)) }{\sigma^{\alpha^{*}+\varepsilon}(c_2H_s\lbrb{\beta})}	\geq 1.
	\end{split}
	\end{equation*}
	 Thus, \eqref{eq:GenBounds1} and \eqref{eq:GenBounds2} follow respectively from  another application of Corollary \ref{cor1:growth} in the last relations. This proves the claims modulo to verification of the pathwise construction in  \eqref{eq:stochBounds}. Let us start with the upper bound therein.	
It can be obtained by the addition of two independent processes with density of the intensity measures of the form
\begin{align*}
&\widetilde{v}_x(s):=\lbrb{\lbrb{hs^{-\alpha^*-1}-\frac{\alpha(x)s^{-\alpha(x)-1}}{\Gamma(1-\alpha(x))}}\ind{s> 1}+hs^{-\alpha^*-1}\ind{s\leq 1}}\ind{x\in A_\beta}\\
&\bar{v}_x(s):=\lbrb{\lbrb{hs^{-\alpha^*-\beta-1}-\frac{\alpha(x)s^{-\alpha(x)-1}}{\Gamma(1-\alpha(x))}}\ind{s\leq 1}+hs^{-\alpha^*-\beta-1}\ind{s> 1}}\ind{x\in A_\beta}   
\end{align*}	
	as long as $h>q_2=\sup_{x\in A_{\beta}} \frac{\alpha(x)}{\Gamma(1-\alpha(x))}$ which ensures the positivity of $\widetilde{v}_x,\bar{v}_x$ on $\Rb^+\times A_\beta$. Then on $A_\beta$ the total intensity of the sum of the three independent  processes is
\begin{equation*}
\begin{split}
&v_x(s)+\widetilde{v}_x(s)+	\bar{v}_x(s)=h s^{-\alpha^*-1}+hs^{-\alpha^*-\beta-1}	    
\end{split}
\end{equation*}	
or the process is also the sum of two independent copies of time changed  stable subordinators $\sigma^{\alpha^{*}}(c_1\cdot), \sigma^{\alpha^{*}+\beta}(c_2\cdot)$. We simply recall that if $\chi$ is a subordinator with \LL measure $\Pi$ then the \LL measure of $\chi_{ct}$ is $c\Pi$ and by choosing $c_1=\frac{h\Gamma\lbrb{1-\alpha^*}}{\alpha^*}$ and $c_2=\frac{h\Gamma\lbrb{1-\alpha^*-\beta}}{\alpha^*+\beta}$ we ensure that the intensity is respectively $h s^{-\alpha^*-1}$ and $h s^{-\alpha^*-\beta-1}$.
 The lower bound in \eqref{eq:stochBounds} can be obtained by thinning of the jumps of $\sigma_1$, say $\Delta=\lbrb{\Delta_s}_{s\geq 0}$, in the manner
	\[\sum_{s}\Delta_s \epsilon_s \]
	with $\epsilon_s$ independent of $\sigma_1$ Bernoulli random variable with parameter
	\[p(B_s,\Delta_s)=\frac{h \Delta_s^{-\alpha^*-\varepsilon -1}}{\frac{\alpha(B_s)}{\Gamma(1-\alpha(B_s))}\Delta_s^{-\alpha(B_s)-1}}\ind{\abs{\Delta_s}>1}.\]
	This procedure thins the jumps accordingly as long as 
	\[\frac{h}{\frac{\alpha(B_s)}{\Gamma(1-\alpha(B_s))}}<1,\]
	the choice of which is always possible on $B_s\in A_\varepsilon$ from \eqref{eq:cond} and ensures that
	\[\frac{h \Delta_s^{-\alpha^*-\varepsilon -1}}{\frac{\alpha(B_s)}{\Gamma(1-\alpha(B_s))}\Delta_s^{-\alpha(B_s)-1}}<1.\]
	Then the intensity of the thinned process is given for $x\in A_{\varepsilon}$ by
	\[\widetilde{v}_x(s)=\frac{h s^{-\alpha^*-\varepsilon -1}}{\frac{\alpha(x)}{\Gamma(1-\alpha(x))}s^{-\alpha(x)-1}}v_x(s)\ind{s\geq 1}=h s^{-\alpha^*-\varepsilon -1}\ind{s\geq 1}\]
	or that of a time changed stable process whose jumps smaller than $1$ have been trimmed away.
\end{proof} 
However, \eqref{eq:GenBounds1} and \eqref{eq:GenBounds2} can be combined with \eqref{eq:Bounds1} to yield the following almost sure estimates on the growth of $\sigma_1(H_s(\beta))$.
\begin{coro}\label{cor:stochBounds} For all $\varepsilon>0$ small enough
	\begin{equation}\label{eq:GenBounds11}
	\limi{t}\frac{\sigma_1(H_t(\beta))}{t^{\frac{1}{2\alpha^*}+\varepsilon}}=0
	\end{equation}
	and 
	\begin{equation}\label{eq:GenBounds21}
	\limi{t}\frac{\sigma_1(H_t(\beta))}{\lbrb{t^{\frac{1}{2\alpha^*+2\varepsilon}-\varepsilon}}}=\infty.
	\end{equation}
\end{coro}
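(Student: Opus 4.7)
The plan is to chain the two preceding results. Lemma \ref{lem:GenBounds} controls $\sigma_1(H_t(\beta))$ by powers of $H_t(\beta)$, and Corollary \ref{cor:Bounds} controls $H_t$ (equivalently $H_t(\beta)$, since by the standing assumptions $A_\beta \subseteq \Rb^+$ is bounded with $0<\mathpzc{l}(A_\beta)<\infty$ and $\mathpzc{l}(\partial A_\beta)=0$, so the hypotheses of Corollary \ref{cor:Bounds} are met) by powers of $t$. Composing these two deterministic envelopes yields a power-of-$t$ sandwich for $\sigma_1(H_t(\beta))$. The entire proof then reduces to checking that the exponents can be tuned to fall on the correct side of $\frac{1}{2\alpha^*}+\varepsilon$ and $\frac{1}{2\alpha^*+2\varepsilon}-\varepsilon$, respectively.

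In detail, fix $\varepsilon>0$ small and introduce auxiliary parameters $\varepsilon_1,\varepsilon_2,\varepsilon_3>0$ to be chosen later. Apply Lemma \ref{lem:GenBounds} twice: once with its parameter equal to $\varepsilon_1$ to obtain, a.s.\ for all sufficiently large $s$, the upper bound $\sigma_1(H_s(\beta))\leq (H_s(\beta))^{1/\alpha^*+\varepsilon_1}$; and once with its parameter equal to $\min(\varepsilon,\varepsilon_3)$ to obtain the lower bound $\sigma_1(H_s(\beta))\geq (H_s(\beta))^{1/(\alpha^*+\varepsilon)-\varepsilon_3}$. Then apply Corollary \ref{cor:Bounds} to $A_\beta$ with parameter $\varepsilon_2$: a.s.\ for all sufficiently large $t$,
\[
t^{1/2-\varepsilon_2} \,\leq\, H_t(\beta) \,\leq\, t^{1/2+\varepsilon_2}.
\]

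Substituting the bound on $H_t(\beta)$ into the bounds on $\sigma_1(H_s(\beta))$ and evaluating at $s=t$, we obtain, a.s.\ for all $t$ large,
\[
t^{(1/2-\varepsilon_2)\lbrb{1/(\alpha^*+\varepsilon)-\varepsilon_3}} \,\leq\, \sigma_1(H_t(\beta)) \,\leq\, t^{(1/2+\varepsilon_2)(1/\alpha^*+\varepsilon_1)}.
\]
The upper exponent equals $\frac{1}{2\alpha^*}+\frac{\varepsilon_1}{2}+\frac{\varepsilon_2}{\alpha^*}+\varepsilon_1\varepsilon_2$, which, by choosing $\varepsilon_1,\varepsilon_2$ small in terms of $\varepsilon$ and $\alpha^*$, can be made strictly less than $\frac{1}{2\alpha^*}+\varepsilon$; this gives \eqref{eq:GenBounds11}. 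The lower exponent equals $\frac{1}{2(\alpha^*+\varepsilon)}-\frac{\varepsilon_3}{2}-\frac{\varepsilon_2}{\alpha^*+\varepsilon}+\varepsilon_2\varepsilon_3$, and since $\frac{1}{2(\alpha^*+\varepsilon)}=\frac{1}{2\alpha^*+2\varepsilon}$, the remaining correction can be forced to exceed $-\varepsilon$ by shrinking $\varepsilon_2$ and $\varepsilon_3$; this gives \eqref{eq:GenBounds21}.

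There is no substantive obstacle. The only delicate point is bookkeeping of the three auxiliary parameters $\varepsilon_1,\varepsilon_2,\varepsilon_3$ against the single parameter $\varepsilon$ in the statement, together with verifying that Corollary \ref{cor:Bounds} genuinely applies to $A_\beta$ (which it does, since $A_\beta$ is bounded in $\Rb^+$ with positive Lebesgue measure).
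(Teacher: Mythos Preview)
Your proposal is correct and is exactly the approach the paper intends: the paper gives no detailed proof of this corollary, simply stating that \eqref{eq:GenBounds1}, \eqref{eq:GenBounds2} (Lemma~\ref{lem:GenBounds}) can be combined with \eqref{eq:Bounds1} (Corollary~\ref{cor:Bounds}) to yield the result. Your careful exponent bookkeeping fills in precisely this composition of the two power envelopes.
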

Next, we estimate the growth of $\sigma_2$.
\begin{coro}\label{cor:stochBounds1}
	Let $\alpha,A_\beta$ be as in Lemma \ref{lem:GenBounds}. Assume in addition that $\alpha^*=\min_{x\in\Rb}\alpha(x)>0, \max_{x\in\Rb}\alpha(x)<1$, $1>\limi{x}\alpha(x)=\alpha_I>\alpha^*$ and $1>\lim_{x\to -\infty}\alpha(x)=\alpha_J>\alpha^*$. Then, with $\alpha_\diamond=\min\curly{\alpha_I,\alpha_J,2\lbrb{\alpha^*+\beta}}$  
	\begin{equation}\label{eq:GenBounds11_1}
     	\limi{t}\frac{\sigma_2\lbrb{t-H_t\lbrb{\beta}}}{t^{\frac{1}{\alpha_\diamond}+\epsilon}}=0;\,\, \limi{t}\frac{\sigma_2\lbrb{t-H_t\lbrb{\beta}}}{t^{\frac{1}{\alpha_\diamond}-\epsilon}}=\infty,\text{ a.s.,}
	\end{equation}
	for all $\epsilon$ small enough.
\end{coro}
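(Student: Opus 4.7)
The approach mirrors that of Lemma~\ref{lem:GenBounds} and Corollary~\ref{cor:stochBounds}, but localised to the three regions of $\mathbb{R}\setminus A_\beta$ on which $\alpha$ behaves distinctly: a bounded annulus near $\partial A_\beta$ (on which $\alpha\geq \alpha^*+\beta$, with the infimum attained near the boundary) and the two unbounded tails $(K,\infty)$ and $(-\infty,-K)$ on which $\alpha$ is arbitrarily close to $\alpha_I$ and $\alpha_J$. The Brownian occupation scales as $\sqrt{t}$ on the bounded annulus and is of order $t$ on the tails, which is exactly why $\alpha_\diamond$ is the minimum of $2(\alpha^*+\beta)$, $\alpha_I$, and $\alpha_J$.

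Concretely, I would first fix $K>0$ large enough that $A_\beta\subset[-K,K]$ and that $|\alpha(x)-\alpha_I|<\delta$ for $x\geq K$, $|\alpha(x)-\alpha_J|<\delta$ for $x\leq -K$, with a small $\delta>0$, and then decompose
\[
\sigma_2(t-H_t(\beta)) \,=\, S^{\mathrm{bdd}}_t+S^+_t+S^-_t,
\]
where the three summands collect the jumps of $\sigma$ at times $v\leq t$ with $B_v$ in $[-K,K]\setminus A_\beta$, in $(K,\infty)$, and in $(-\infty,-K)$ respectively. Denoting by $M_t,J_t,K_t$ the corresponding Brownian occupation times, one has $M_t+J_t+K_t=t-H_t(\beta)$; Corollary~\ref{cor:Bounds} gives $M_t=\Theta(t^{1/2\pm\epsilon})$ a.s., whence $J_t+K_t=t-O(t^{1/2+\epsilon})$.

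The bounded contribution $S^{\mathrm{bdd}}_t$ is dispatched by applying Corollary~\ref{cor:stochBounds} with $[-K,K]\setminus A_\beta$ in place of $A_\beta$ and the index $\alpha^*+\beta$ in place of $\alpha^*$: this yields $S^{\mathrm{bdd}}_t/t^{1/(2(\alpha^*+\beta))+\epsilon}\to 0$ and $S^{\mathrm{bdd}}_t/t^{1/(2(\alpha^*+\beta))-\epsilon}\to\infty$ a.s. For $S^+_t$ and $S^-_t$ I would repeat the pathwise sandwich of Lemma~\ref{lem:GenBounds}: on $\{B_v>K\}$ the intensity $\nu(\cdot,B_v)$ is trapped between two stable intensities of indices $\alpha_I\pm\delta$ via the independent-addition (upper bound) and Bernoulli-thinning (lower bound) devices behind \eqref{eq:stochBounds}, and analogously on $\{B_v<-K\}$ using $\alpha_J\pm\delta$. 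Combined with Corollary~\ref{cor1:growth} and the trivial $J_t,K_t\leq t$, this produces $S^+_t\lesssim t^{1/(\alpha_I-\delta)+\epsilon}$ and $S^-_t\lesssim t^{1/(\alpha_J-\delta)+\epsilon}$ a.s.; summing the three upper bounds and sending $\delta\downarrow 0$ delivers the upper-bound half of \eqref{eq:GenBounds11_1}.

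The delicate step is the lower bound, because the arcsine law prevents any a.s.\ estimate of the form $J_t\gtrsim t$ or $K_t\gtrsim t$ separately. I would circumvent this by exploiting $J_t+K_t=t-O(t^{1/2+\epsilon})$: at every sufficiently large $t$ at least one of $J_t,K_t$ must exceed $t/3$, and plugging this into the lower stable sandwich on whichever side is large at that $t$ yields a pointwise $S^+_t\vee S^-_t\gtrsim t^{1/\alpha_I-\epsilon}\wedge t^{1/\alpha_J-\epsilon}$. Together with the unconditional $S^{\mathrm{bdd}}_t\gtrsim t^{1/(2(\alpha^*+\beta))-\epsilon}$, taking the pointwise maximum over the three contributions recovers $\sigma_2(t-H_t(\beta))\gtrsim t^{1/\alpha_\diamond-\epsilon}$ a.s. Verifying rigorously that the pointwise maximum survives the oscillations (so that the lower bound holds for all large $t$, not merely along subsequences) is the key technical point; this will require combining the non-decreasing monotonicity of $S^\pm_t$ in $t$ with the fact that the ``bad'' intervals on which one of $J_t,K_t$ drops below $t/3$ have controlled length thanks to $M_t+H_t(\beta)=O(t^{1/2+\epsilon})$.
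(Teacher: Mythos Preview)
Your decomposition, the handling of the bounded annulus, and the upper bound are essentially the paper's argument; the paper merely streamlines by not separating the two tails, working instead with the single region $[-K,K]^c$ whose occupation time is $t-H^\circ_t\sim t$ (since $H^\circ_t=O(t^{1/2+\epsilon})$ by Corollary~\ref{cor:Bounds}). This removes the arcsine issue you flag, at least at the level of the \emph{total} tail occupation.

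Your lower-bound workaround, however, has a genuine gap. The dichotomy ``at each large $t$ one of $J_t,K_t$ exceeds $t/3$'' is correct, but feeding whichever side is large into its own stable sandwich yields only
\[
S^+_t\vee S^-_t \;\gtrsim\; t^{1/\alpha_I-\epsilon}\wedge t^{1/\alpha_J-\epsilon}\;=\;t^{1/\max\{\alpha_I,\alpha_J\}-\epsilon},
\]
because when it is the tail with the \emph{larger} index that happens to be the large one, the subordinator living there has the slower growth. Combined with $S^{\mathrm{bdd}}_t\gtrsim t^{1/(2(\alpha^*+\beta))-\epsilon}$ this delivers exponent $\max\{1/(2(\alpha^*+\beta)),\,1/\max\{\alpha_I,\alpha_J\}\}$, which is strictly smaller than the required $1/\alpha_\diamond=\max\{1/(2(\alpha^*+\beta)),1/\alpha_I,1/\alpha_J\}$ precisely when $\alpha_I\neq\alpha_J$ and $\alpha_\circ<2(\alpha^*+\beta)$. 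In that regime your argument does not reach $t^{1/\alpha_\diamond-\epsilon}$, and no amount of monotonicity bookkeeping on the ``bad intervals'' can repair this, since the exponent itself is wrong.

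The fix is that the $t/3$ dichotomy is unnecessarily crude: \emph{each} half-line occupation satisfies $J_t,\,K_t\geq t^{1-\epsilon}$ eventually a.s.\ for every $\epsilon>0$ (apply Borel--Cantelli along $t=2^n$ to the arcsine-type estimate $P(K_t/t<\delta)\asymp\sqrt{\delta}$ and then use monotonicity of $t\mapsto K_t$; this is also the $c_i=1$ instance of the computation behind Lemma~\ref{lem:inf}). Feeding $K_t\geq t^{1-\epsilon}$ into the $(\alpha_\circ+\delta)$-stable lower sandwich on the tail where $\alpha\approx\alpha_\circ$ gives $S^-_t\gtrsim t^{(1-\epsilon)(1/(\alpha_\circ+\delta)-\epsilon')}$, hence $t^{1/\alpha_\circ-\epsilon''}$, for all large $t$ simultaneously --- no oscillation problem remains.
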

\begin{proof}
	Fix $\varepsilon>0$. Let $K>0$ be large enough so that $\sup_{x< -K}\abs{\alpha(x)-\alpha_J}\leq \varepsilon/100$ and $\sup_{x>K}\abs{\alpha(x)-\alpha_I}\leq \varepsilon/100$. Let \[A_\circ=A_\beta\cup[-K,K],\] where clearly $\mathpzc{l}\lbrb{A_\circ}\in\lbrb{0,\infty}$. Also set $A_\diamond=A_\circ\setminus A_\beta$. Then the occupation measure of $A_\circ$ is evaluated as 
	\[H^\circ_t=H_t\lbrb{\beta}+H^\diamond_t\]
	and from the construction of $\sigma_1$ and $\mathpzc{l}\lbrb{\partial A_\beta}=0$, we check that $\sigma_2$ grows only on $A_\diamond$ 	or by the increase of $H^\diamond$. Since $\alpha(x)>\alpha_*+\beta$ on $A^c_\beta$ and $\mathpzc{l}\lbrb{A_\circ}\in\lbrb{0,\infty}$ absolutely the same arguments as in Lemma \ref{lem:GenBounds} and Corollary \ref{cor:stochBounds} yield that
	\begin{equation}\label{eq:stochBounds1_1}
	\begin{split}
	& \limi{t}\frac{\sigma_2(H^\diamond_t)}{t^{\frac{1}{2\lbrb{\alpha^*+\beta}}+\eta_1}}=0;\,\,\limi{t}\frac{\sigma_2(H^\diamond_t)}{t^{\frac{1}{2\lbrb{\alpha^*+\beta}}-\eta_1}}=\infty\text{ a.s.,}
	\end{split}
	\end{equation}
	for all $\eta_1>0$ small enough.  Next, note that 
	\begin{equation}\label{eq:decomp}
	\sigma_2\lbrb{t-H_t\lbrb{\beta}}=\sigma_2\lbrb{t-H^\circ_t}+\sigma_2\lbrb{H^\diamond_t}.
	\end{equation}
    Precisely, as the construction leading to \eqref{eq:stochBounds}, denoting $\alpha_\circ=\min\curly{\alpha_I,\alpha_J}$,  we can show that
    \begin{equation}\label{eq:stochBounds2}
    \begin{split}
    &\sigma_2(t-H^\circ_t)\leq \sigma^{\alpha_{\circ}}(c_1\lbrb{t-H^\circ_t})+\sigma^{\alpha_{\circ}+\eta}(c_1\lbrb{t-H^\circ_t}),\\
    &\sigma_2(\lbrb{t-H^\circ_t})\geq \sigma^{\alpha_{\circ}+\eta}(c_2\lbrb{t-H^\circ_t})-\sum_{v\leq c_2\lbrb{t-H^\circ_t}}\Delta \sigma^{\alpha_{\circ}+\eta}_v\ind{\Delta \sigma^{\alpha_{\circ}+\eta}_v\leq 1},
    \end{split}
    \end{equation}
    where $\varepsilon/100<\eta<\varepsilon/2$ and $c_1,c_2$ correspond to time changes related to estimates of the densities precisely as in \eqref{eq:b1} and \eqref{eq:b2}, and $\sigma^{\cdot}$ stands for stable subordinator of index $\cdot\in\lbrb{0,1}$. However, as in \eqref{eq:Bounds1} we have that $H^\circ_t$ grows almost surely sublinearly and thus we can conclude that for any such $\eta$  small enough
    	\begin{equation*}
    \begin{split}
    &\liminfi{t}\frac{\sigma_2(t-H^\circ_t)}{t^{\frac{1}{\alpha_{\circ}}-\eta}}=\infty;\quad \limsupi{t}\frac{\sigma_2(t-H^\circ_t)}{t^{\frac{1}{\alpha_{\circ}+\eta}+\eta}}=0.
    \end{split}
    \end{equation*}
	Since $K$ can be chosen as large as we wish and thus $\varepsilon$ and $\eta$ as small as we wish, we deduct  via \eqref{eq:decomp} and \eqref{eq:stochBounds1_1}  the validity of \eqref{eq:GenBounds11_1} for all $\epsilon$ small enough.  This settles the proof of the corollary.
\end{proof}
Then the following result holds true
\begin{te}\label{thm:occupation}
	Let  $\alpha:\Rb\mapsto\lbrb{0,1}$, $\alpha^*=\min_{x\in\Rb}\alpha(x)>0, \max_{x\in\Rb}\alpha(x)<1$ and \[1>\limi{x}\alpha(x)=\alpha_I>\alpha^*,\,\,1>\lim_{x\to -\infty}\alpha(x)=\alpha_J>\alpha^*.\] Also let there exist $\beta_0$ small enough such that for all $\beta_0\geq \beta$, the set $A_\beta=\curly{x\in\Rb:\alpha(x)<\alpha^*+\beta<1}$ is bounded and satisfies $0<\mathpzc{l}\lbrb{A_\beta}<\infty$ and also $\mathpzc{l}\lbrb{\partial A_\beta}=0$. Then,
	\begin{enumerate}
		\item if $2\alpha^*<\min\curly{\alpha_I,\alpha_J}$ we have that for any $\beta\leq \beta_0$
			\begin{equation}\label{eq:occupationG}
		\limi{t}\frac{\int_{0}^{t}\ind{X(s)\in A_\beta}ds}{t}=1,\,\text{a.s.};
		\end{equation}
		\item  and if $2\alpha^*>\min\curly{\alpha_I,\alpha_J}$, for any $K>0$,
		\begin{equation}\label{eq:occupationG1}
		\limi{t}\frac{\int_{0}^{t}\ind{X(s)\in A^c_\beta\cap \lbbrbb{-K,K}^c}ds}{t}=1,\,\text{a.s.}.
		\end{equation}
	\end{enumerate}
	
\end{te}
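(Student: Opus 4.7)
The plan is to mimic, almost verbatim, the strategy of Lemma~\ref{lem:twoLevels}, replacing the explicit stable-subordinator growth bounds used there by the more general estimates of Corollaries~\ref{cor:stochBounds} and \ref{cor:stochBounds1}. The starting point is again the decomposition
\[
\sigma(s)=\sigma_1\bigl(H_s(\beta)\bigr)+\sigma_2\bigl(s-H_s(\beta)\bigr),
\]
available by \eqref{eq:decom}--\eqref{eq:constr} since $0<\mathpzc{l}(A_\beta)<\infty$ and $\mathpzc{l}(\partial A_\beta)=0$. The two summands measure respectively the contribution to $\sigma$ from jumps made while $B\in A_\beta$ (where $\alpha$ is close to $\alpha^*$) and the contribution from jumps made while $B\notin A_\beta$ (where $\alpha\geq \alpha^*+\beta$).

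For case (1), under $2\alpha^*<\min\{\alpha_I,\alpha_J\}$ I would shrink $\beta\leq\beta_0$ so that also $2(\alpha^*+\beta)>2\alpha^*$ and hence $\alpha_\diamond=\min\{\alpha_I,\alpha_J,2(\alpha^*+\beta)\}>2\alpha^*$. Choosing $\varepsilon>0$ small enough that $\tfrac{1}{2\alpha^*+2\varepsilon}-\varepsilon>\tfrac{1}{\alpha_\diamond}+\varepsilon$, Corollary~\ref{cor:stochBounds} and Corollary~\ref{cor:stochBounds1} then give, almost surely,
\[
\sigma_2(t-H_t(\beta))=o\bigl(\sigma_1(H_t(\beta))\bigr)\quad\text{so that}\quad\lim_{t\to\infty}\frac{\sigma_1(H_t(\beta))}{\sigma(t)}=1.
\]
From here the proof proceeds exactly as in Lemma~\ref{lem:twoLevels}: one upgrades this to $\sigma_1(H_{L(t)}(\beta)-)/\sigma(L(t)-)\to 1$ by the same contradiction/right-continuity argument given after \eqref{eq:cond4.6}, then invokes Proposition~\ref{prop:Close} (whose hypotheses are now satisfied in the almost-sure sense by taking $V=\sigma_1\circ H_\cdot(\beta)$ and $Y=\sigma_2(\cdot-H_\cdot(\beta))$) to conclude that, for fixed $\eta\in(0,1)$, a.s.\ for all $t$ large enough the alternative $A_{t,\eta}^c\cap A_t^c=\emptyset$ forces
\[
\mathpzc{l}\bigl(s\leq t:X(s)\in A_\beta\bigr)\geq (1-\eta)t.
\]
Letting $\eta\downarrow 0$ yields \eqref{eq:occupationG}.

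For case (2), under $2\alpha^*>\min\{\alpha_I,\alpha_J\}$ the inequality between the two summands reverses: for $\beta$ small, $\alpha_\diamond=\min\{\alpha_I,\alpha_J\}<2\alpha^*$, hence Corollaries~\ref{cor:stochBounds} and \ref{cor:stochBounds1} deliver $\sigma_1(H_t(\beta))=o(\sigma_2(t-H_t(\beta)))$ and the same machinery as above produces $\mathpzc{l}(s\leq t:X(s)\in A_\beta^c)/t\to 1$. To strengthen this to $A_\beta^c\cap[-K,K]^c$, I would split further as in Corollary~\ref{cor:stochBounds1}: with $A_\circ=A_\beta\cup[-K,K]$ and $A_\diamond=A_\circ\setminus A_\beta$,
\[
\sigma_2(t-H_t(\beta))=\sigma_2(t-H_t^\circ)+\sigma_2(H_t^\diamond),
\]
where $H_t^\diamond$ is the occupation in the bounded set $A_\diamond$ (on which $\alpha\geq\alpha^*+\beta$) and hence grows as $t^{1/(2(\alpha^*+\beta))\pm\varepsilon}$; since $2(\alpha^*+\beta)>\alpha_\diamond$ for $\beta$ small this term is negligible compared with $\sigma_2(t-H_t^\circ)$, which therefore satisfies $\sigma_2(t-H_t^\circ)/\sigma(t)\to 1$ a.s. Re-running the Proposition~\ref{prop:Close} argument once more yields \eqref{eq:occupationG1}.

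The main obstacle, in my view, is not the asymptotic order comparison (which is a direct corollary of Lemmas and Corollaries already established), but the bookkeeping needed to pass from deterministic-time asymptotics for $\sigma_1/\sigma$ to the corresponding statement at the random time $L(t)-$, and then from there to an almost-sure lower bound on the occupation measure on the good alternative provided by Proposition~\ref{prop:Close}. This is precisely the delicate step carried out between \eqref{eq:cond4.6} and the final display of Lemma~\ref{lem:twoLevels}, and it has to be reproduced carefully in the present generality, taking into account that the extra summand $\sigma_2(H_t^\diamond)$ in case (2) must also be absorbed into the negligible contribution before the Proposition~\ref{prop:Close} argument is applied.
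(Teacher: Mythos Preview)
Your proposal is correct and follows essentially the same route as the paper: both cases are reduced to Lemma~\ref{lem:twoLevels} by replacing the explicit stable growth bounds there with Corollaries~\ref{cor:stochBounds} and~\ref{cor:stochBounds1}, establishing $\sigma_1(H_t(\beta))/\sigma(t)\to 1$ (respectively $\sigma_2(t-H_t^\circ)/\sigma(t)\to 1$), and then invoking Proposition~\ref{prop:Close} and the occupation-time argument verbatim. Two minor slips to fix: in case~(2) it is $\sigma_2(H_t^\diamond)$, not $H_t^\diamond$ itself, that grows as $t^{1/(2(\alpha^*+\beta))\pm\varepsilon}$; and the alternative provided by Proposition~\ref{prop:Close} is $A_{t,\eta}\cap A_t^c=\emptyset$, not $A_{t,\eta}^c\cap A_t^c=\emptyset$.
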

\begin{proof} Recall that $X(t)=B(L(t))$.
	Let $2\alpha^*<\min\curly{\alpha_I,\alpha_J}$ and choose $\beta'>0$ small enough so that even $2\alpha^*+2\beta'<\min\curly{\alpha_I,\alpha_J}$. Choose $\beta<\beta'$ so that the conditions of the theorem are satisfied. Then using Corollaries \ref{cor:stochBounds}, \ref{cor:stochBounds1} we get precisely as in the proof of  Lemma \ref{lem:twoLevels} that the equivalent to \eqref{eq:comparable} relation holds, that is
	 \begin{equation}\label{eq:comparable1}
	\limi{t}\frac{\sigma_1\lbrb{H_t\lbrb{\beta}}}{\sigma(t)}=1,\text{ a.s.}.
	\end{equation}
	Since again
	\[\mathpzc{l}\lbrb{\lbbrbb{0,t}\setminus\curly{t\geq 0:\, B(t) \in \partial A_\beta}}=t\]
	and the validity of Proposition \ref{prop:Close} in the almost sure sense is at hand, precisely as in the proof of \eqref{eq:occupation} of Lemma \ref{lem:twoLevels} we establish \eqref{eq:occupationG}. Assume next that $2\alpha^*>\min\curly{\alpha_I,\alpha_J}$. Then for any $\beta>0$, \[\alpha_\diamond=\min\curly{\alpha_I,\alpha_J,2\lbrb{\alpha^*+\beta}}=\min\curly{\alpha_I,\alpha_J}\]  
	and from \eqref{eq:GenBounds21} and \eqref{eq:GenBounds11_1}  we conclude that
	\[\limi{t}\frac{\sigma_2(t-H_t)}{\sigma(t)}=1\text{ a.s..}\]
	Moreover, from \eqref
	{eq:stochBounds1_1} and \eqref{eq:decomp} and with \[H^\diamond_t=\int_{0}^{t}\ind{B(s)\in A^\diamond}ds;\,\,H^\circ_t=\int_{0}^{t}\ind{B(s)\in A^\circ}ds,\]
	 where for any $K>0$,   $A_\circ=A_\beta\cup[-K,K]$ and $A_\diamond=A_\circ\setminus A_\beta$, we have that
	\[\limi{t}\frac{\sigma_2(t-H^\circ_t)}{\sigma(t)}=1\text{ a.s..}\]
	Then the proof follows precisely as the proof of case $2\alpha_1>\alpha_2$  of Lemma \ref{lem:twoLevels}.
\end{proof}

When $A_0=\curly{x\in\Rb:\alpha(x)=\alpha^*}$ is a  bounded disjoint union of intervals which implies that $\mathpzc{l}\lbrb{A_0}\in\lbrb{0,\infty}$ and for all small $\beta>0$,  $A_0=A_\beta$, where $A_\beta=\curly{x\in\Rb:\alpha(x)<\alpha^*+\beta}$, then we have the stronger result which localizes  in probability the anomalous diffusion.
\begin{te}\label{thm:hyp}
	Let  $\alpha:\Rb\mapsto\lbrb{0,1}$ and $A_0=\curly{x\in\Rb:\alpha(x)=\alpha^*}=\bigcup_i I_i,$ be bounded and where $I_i$ are disjoint intervals. Let also $\mathpzc{l}\lbrb{A_0}\in\lbrb{0,\infty}$, $\mathpzc{l}\lbrb{\partial A_0}=0$ and for all small $\beta>0$,  $A_0=A_\beta$, where $A_\beta=\curly{x\in\Rb:\alpha(x)<\alpha^*+\beta}$. Finally, let $\alpha^*=\min_{x\in\Rb}\alpha(x)>0, \max_{x\in\Rb}\alpha(x)<1$ and \[1>\limi{x}\alpha(x)=\alpha_I,\,\,1>\lim_{x\to -\infty}\alpha(x)=\alpha_J.\] Then if $2\alpha^*<\min\curly{\alpha_I,\alpha_J}$ it holds true that 
	\begin{equation}\label{eq:hyp}
		\limi{t}\Pbb{X(t)\in A_0}=1.
	\end{equation}
\end{te}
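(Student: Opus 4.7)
The plan is to combine Proposition \ref{prop:Close} with the key observation that, because $\alpha(x)\equiv\alpha^*$ on $A_0$, the ``$A_0$-piece'' $\sigma_1$ of the subordinator is, in its internal clock, a genuine $\alpha^*$-stable subordinator independent of $B$. Writing as in \eqref{eq:decom} $V(y)=\sigma_1(H_y)$ and $Y(y)=\sigma_2(y-H_y)$, the assumption $2\alpha^*<\min\{\alpha_I,\alpha_J\}$, combined with the growth estimates of Corollaries \ref{cor:stochBounds} and \ref{cor:stochBounds1} (exactly as in the proof of Theorem \ref{thm:occupation}), gives $V(L(t))/\sigma(L(t))\to 1$ almost surely. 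For each fixed $t>0$, the infinite activity of $\sigma$ guarantees $\sigma(L(t))>t$ almost surely, so the event $\{X(t)\notin A_0\}=\{B(L(t))\notin A_0\}$ forces $\Delta Y(L(t))=\Delta\sigma(L(t))$. Proposition \ref{prop:Close} then yields, for every $\eta\in(0,1)$,
\[
\lim_{t\to\infty} P\bigl(\{X(t)\notin A_0\}\cap\{V(L(t)-)\leq(1-\eta)t\}\bigr)=0.
\]

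The second step is to bound the complementary event $\{V(L(t)-)>(1-\eta)t\}$ uniformly in $t$, in the limit $\eta\downarrow 0$. For this I introduce $L^V(t):=\inf\{y:V(y)>t\}$ and $L^{\sigma_1}(t):=\inf\{r:\sigma_1(r)>t\}$. Since $\sigma\geq V$ pointwise, $L(t)\leq L^V(t)$, and monotonicity of $V$ gives the pathwise comparison $V(L(t)-)\leq V(L^V(t)-)$. On the other hand, the Laplace identity \eqref{eq:sigma}, combined with $\alpha\equiv\alpha^*$ on $A_0$, yields $\mathds{E}[e^{-\lambda V(y)}\mid B]=\exp(-\lambda^{\alpha^*}H_y)$, so that $\sigma_1$ parametrized by its own clock $r=H_y$ is a standard $\alpha^*$-stable subordinator, independent of $B$. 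Continuity of $H$ then gives $H_{L^V(t)}=L^{\sigma_1}(t)$ and hence $V(L^V(t)-)=\sigma_1(L^{\sigma_1}(t)-)$. By the Dynkin--Lamperti arcsine law for stable subordinators, the ratio $\sigma_1(L^{\sigma_1}(t)-)/t$ has the $t$-independent density $\rho(u)=\frac{\sin\pi\alpha^*}{\pi}\,u^{-\alpha^*}(1-u)^{\alpha^*-1}$ on $(0,1)$.

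Putting the pieces together, for every $\eta\in(0,1)$ one obtains $\limsup_{t\to\infty} P(X(t)\notin A_0)\leq \int_{1-\eta}^{1}\rho(u)\,du$, and the right-hand side vanishes as $\eta\downarrow 0$ since $\rho$ is a probability density that does not charge the endpoint $u=1$. Sending $\eta\downarrow 0$ yields \eqref{eq:hyp}. The main obstacle is the clean identification $V(L^V(t)-)=\sigma_1(L^{\sigma_1}(t)-)$ with $\sigma_1$ a genuine $\alpha^*$-stable process; it relies crucially on the hypothesis $A_0=A_\beta$ for all small $\beta$ (which ensures a pure $\alpha^*$-stable structure inside $A_0$, with no contribution from intermediate exponents), together with the continuity of $H$, which allows the left- and right-limits to commute through the time-change and reduces the tail estimate to the classical arcsine law for the undershoot of a stable subordinator.
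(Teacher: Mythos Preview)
Your proof is correct and follows essentially the same route as the paper: both invoke Proposition~\ref{prop:Close} to dispose of the event $\{\sigma_1(H_{L(t)}-)\le(1-\eta)t\}$ on $\{X(t)\notin A_0\}$, then dominate the complementary event by the undershoot $\sigma_1(L^{\sigma_1}(t)-)$ of the genuine $\alpha^*$-stable subordinator $\sigma_1$, whose explicit law (the paper uses Bertoin's potential-density formula, you use Dynkin--Lamperti) makes the bound vanish as $\eta\downarrow0$. One cosmetic slip: the arcsine density of $\sigma_1(L^{\sigma_1}(t)-)/t$ is $\tfrac{\sin\pi\alpha^*}{\pi}\,u^{\alpha^*-1}(1-u)^{-\alpha^*}$ (you wrote the density of the \emph{age} $1-\sigma_1(L^{\sigma_1}(t)-)/t$), but since your argument only uses that $\rho$ is a probability density not charging $u=1$, the conclusion is unaffected.
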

For clarity let us consider a special case which is of greatest interest.
\begin{coro}\label{cor:conseq}
	 Let   $\alpha:\Rb\mapsto\lbrb{0,1}$ be piece-wise constant taking values $0<\alpha_1<\alpha_2<\cdots<\alpha_n<1$. Let $A=\curly{x\in\Rb:\alpha(x)=\alpha_1}$ be a finite union of intervals such that $\mathpzc{l}\lbrb{A}\in\lbrb{0,\infty}$ and $\min\curly{\limi{x}\alpha(x);\lim_{x\to -\infty}\alpha(x)}=\alpha_j, 2\leq j\leq n$. If $2\alpha_1<\alpha_j$ then 
	 	\begin{equation}\label{eq:hyp1}
	 \limi{t}\Pbb{X(t)\in A}=1.
	 \end{equation}
	 Otherwise, if $2\alpha_1>\alpha_j$ and $A_j=\curly{x\in \Rb:\alpha(x)=\alpha_j}$, then
	 	\begin{equation}\label{eq:hyp2}
	 \limi{t}\Pbb{X(t)\in A_j}=1.
	 \end{equation}
\end{coro}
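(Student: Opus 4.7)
The first assertion will be a direct consequence of Theorem \ref{thm:hyp}. Since $\alpha$ is piecewise constant with values $\alpha_1 < \alpha_2 < \cdots < \alpha_n$ and $A = \{\alpha = \alpha_1\}$ is a finite union of intervals with $\mathpzc{l}(A) \in (0,\infty)$, the set $A$ is bounded, $\partial A$ is finite (hence Lebesgue-null), and for every $\beta \in (0, \alpha_2 - \alpha_1)$ the minimum-neighborhoods satisfy $A_0 = A_\beta = A$. The inequality $2\alpha_1 < \alpha_j = \min(\alpha_I, \alpha_J)$ is precisely the hypothesis of \eqref{eq:hyp}, so \eqref{eq:hyp1} follows at once.

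For the second assertion my plan is to recycle the architecture behind Theorem \ref{thm:hyp} but with the unbounded set $A_j$ taking the role of the attractive region. By relabelling if necessary, assume $\alpha_I = \alpha_j$, so that $A_j$ contains a half-line $[M, \infty)$ for some $M > 0$; the case $\alpha_I = \alpha_J = \alpha_j$ only makes the analysis cleaner, since both tails are then inside $A_j$. I will use the pathwise decomposition
\[
\sigma(s) \, = \, \sigma_{A_j}(H_s(A_j)) + \sigma_{A_j^c}(s - H_s(A_j))
\]
constructed as in \eqref{eq:decom}--\eqref{eq:constr}, and aim for the analogue of \eqref{eq:comparable}, namely $\sigma_{A_j}(H_{L(t)}(A_j))/\sigma(L(t)) \to 1$ in probability. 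Combined with Lemma \ref{nosimjump}, an in-probability variant of Proposition \ref{prop:Close}, and the fact that $\mathpzc{l}(\partial A_j) = 0$, this forces the last jump of $\sigma$ before time $t$ to occur at a Brownian location in $A_j$, yielding $X(t) = B(L(t)) \in A_j$ with probability tending to $1$ and hence \eqref{eq:hyp2}.

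The dominance of $\sigma_{A_j}$ over its complement will be established by two order-of-magnitude estimates. For the complementary piece, split $A_j^c = \bigcup_{k<j}\{\alpha = \alpha_k\} \cup \{\alpha(x) > \alpha_j\}$. Each bounded component $\{\alpha = \alpha_k\}$, $k < j$, contributes a term of order $s^{1/(2\alpha_k) \pm \varepsilon}$ via the arguments of Lemma \ref{lem:GenBounds} and Corollary \ref{cor:Bounds1}; since $\alpha_1 \leq \alpha_k$ and $2\alpha_1 > \alpha_j$, every such $k$ satisfies $2\alpha_k > \alpha_j$, so this piece is $o(s^{1/\alpha_j - \eta})$ for some $\eta > 0$. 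The region $\{\alpha > \alpha_j\}$ contributes at most a stable-type term of order $s^{1/\alpha' \pm \varepsilon}$ for some $\alpha' > \alpha_j$ in its unbounded tail, again of smaller order. On the attractive side, since $[M, \infty) \subset A_j$ the arcsine law gives $P(H_s(A_j) \geq \delta s) \geq c > 0$ for all large $s$; conditional on the Brownian path $\sigma_{A_j}$ is an $\alpha_j$-stable subordinator, so $\sigma_{A_j}(H_s(A_j))$ is of order $s^{1/\alpha_j}$ with positive asymptotic probability, dominating the complementary contribution.

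The principal obstacle is that the almost-sure Brownian-occupation scaling used for bounded $A$ in Corollary \ref{cor:Bounds} (via excursions at zero and the inverse local time, which is $1/2$-stable) is no longer available: the half-line occupation $H_s(A_j)/s$ only has a distributional arcsine-law scaling. Promoting the ratio $\sigma_{A_j}(H_s(A_j))/\sigma(s) \to 1$ to a statement strong enough to extract the probability convergence \eqref{eq:hyp2} can be accomplished by a Borel--Cantelli argument along a deterministic subsequence $s_n$ on which $H_{s_n}(A_j) \geq \delta s_n$, combined with tail bounds for the stable subordinators involved; only in-probability convergence is required, which is what is needed to close the argument.
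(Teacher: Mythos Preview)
Your treatment of \eqref{eq:hyp1} is correct and coincides with the paper's: it is an immediate specialization of Theorem \ref{thm:hyp}.

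For \eqref{eq:hyp2} your overall architecture---decompose $\sigma$ according to $A_j$ versus $A_j^c$, show the $A_j$-contribution dominates, then run the overshoot contradiction of Theorem \ref{thm:hyp} using that $\sigma_{A_j}$ is genuinely $\alpha_j$-stable---is exactly what the paper does. The difference, and the gap, is in the dominance step. You assert that ``the almost-sure Brownian-occupation scaling \ldots\ is no longer available: the half-line occupation $H_s(A_j)/s$ only has a distributional arcsine-law scaling,'' and you then propose an in-probability route via ``a Borel--Cantelli argument along a deterministic subsequence $s_n$ on which $H_{s_n}(A_j)\ge\delta s_n$.'' The quoted claim is false, and the proposed workaround is not well-posed (the condition $H_{s_n}(A_j)\ge\delta s_n$ is random, so one cannot pick a deterministic subsequence on which it holds).

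The point you are missing is that the \emph{same} excursion machinery that gave Corollary \ref{cor:Bounds} for bounded sets also yields an almost-sure lower bound for the half-line. Since $[M,\infty)\subset A_j$ one has $\mathpzc{l}(A_j\cap[0,x])\sim x$, so the subordinator $\chi(t)=H_{\tau(t)}(A_j)$ has Laplace exponent regularly varying of index $1/2$ at zero; Proposition \ref{prop:Bounds} (equivalently Lemma \ref{lem:inf} with $c_1=1$, cf.\ the Remark after Theorem \ref{thm:infRegion}) then gives
\[
\liminf_{t\to\infty}\frac{H_t(A_j)}{t^{1-\varepsilon}}=\infty\quad\text{a.s.}
\]
for every $\varepsilon>0$. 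This is enough: the $\alpha_j$-stable piece evaluated at $H_t(A_j)\ge t^{1-\varepsilon}$ is at least $t^{1/\alpha_j-\varepsilon'}$ a.s., while every bounded level set contributes at most $t^{1/(2\alpha_1)+\varepsilon}$ and (if $\alpha_I\ne\alpha_J$) the other half-line contributes at most $t^{1/\alpha_J+\varepsilon}$, both of strictly smaller order under $2\alpha_1>\alpha_j$ and $\alpha_J>\alpha_j$. Hence $\sigma_{A_j}(H_t(A_j))/\sigma(t)\to 1$ \emph{almost surely}, the a.s.\ version \eqref{eq:Close11} of Proposition \ref{prop:Close} applies, and the contradiction argument of Theorem \ref{thm:hyp} goes through verbatim with $\alpha_j$ in place of $\alpha_1$. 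This is precisely the route the paper takes, invoking Corollaries \ref{cor:stochBounds} and \ref{cor:stochBounds1}.
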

We proceed with the proof of the Theorem \ref{thm:hyp}.
\begin{proof}[Proof of Theorem \ref{thm:hyp}]
	If $2\alpha^*<\min\curly{\alpha_I,\alpha_J}$ we choose $\beta>0$ small enough that $A=A_\beta$. From \eqref{eq:comparable1} we get that
	\begin{equation}\label{eq:comparable2}
	\limi{t}\frac{\sigma_1\lbrb{H_t}}{\sigma(t)}=1,\text{ a.s.},
	\end{equation}
	where we recall that $X(t)=B(L(t))$, $L(t)=\inf\curly{s>0:\sigma(s)>t}$ and from \eqref{eq:decom} and the assumptions of the theorem
	\[\sigma(t)=\sigma_1\lbrb{H_t}+\sigma_2\lbrb{t-H_t}\]
	with $H_t=\int_{0}^{t}\ind{B(s)\in A}ds$. From \eqref{eq:comparable2} we arrive at
	\[	\limi{t}\frac{\sigma_1\lbrb{H_{L(t)}}}{\sigma(L(t))}=1,\text{ a.s.}.\]
	Therefore \eqref{eq:Close11} of Proposition \ref{prop:Close} is valid and hence for any $\eta\in\lbrb{0,1}$
	\[\limi{t}\Pbb{\sigma_1\lbrb{H_{L(t)}}\leq\lbrb{1-\eta}t; \Delta \sigma_2(L(t)-H_{L(t)})=\Delta \sigma(L(t))}=0\]
	where we have used that $P(\Delta \sigma(L(t))=0)=0$.
	Henceforth, if \eqref{eq:hyp1} fails then for any $\eta\in\lbrb{0,1}$ and some $c\in\lbrb{0,1}$
	\begin{equation}\label{eq:contra}
	\begin{split}
	c&\leq \limsupi{t}\Pbb{X(t)\notin A}\\
	&=\limsupi{t}\Pbb{\sigma_1\lbrb{H_{L(t)}}\in\lbrb{\lbrb{1-\eta}t,t}; \Delta \sigma_2(L(t)-H_{L(t)})=\Delta \sigma(L(t))}\\
	&\leq \limsupi{t}\Pbb{\sigma_1\lbrb{L_1(t)-}\in\lbrb{\lbrb{1-\eta}t,t}},
	\end{split}
	\end{equation}
	where the very last inequality follows easily from $\sigma_1\lbrb{H_{L(t)}}\in\lbrb{\lbrb{1-\eta}t,t}$ and $\sigma_1\lbrb{H_{L(t)}}\leq \sigma(L_1(t)-)< t$ with $L_1(t)=\inf\curly{s>0:\sigma_1(s)>t}$. Next, note that $\sigma_1$ is a stable subordinator of index $\alpha_1$, see the definition of $A$, and \eqref{eq:intensity} is the form of its intensity measure. Also from \cite[Chapter III, Section 1]{bertoinb} we know that the potential density of $\sigma_1$ is 
	\[u_1(x)=Cx^{\alpha_1-1}, x>0,\]
	and from \cite[Chapter III, Proposition 2]{bertoinb}
	\begin{equation*}
	\begin{split}
	&\Pbb{\sigma_1\lbrb{L_1(t)-}\in\lbrb{\lbrb{1-\eta}t,t}}=C\int_{(1-\eta)t}^{t}\bar{\nu}(t-y)y^{\alpha_1-1}dy\\
	&=D\int_{(1-\eta)t}^{t}\lbrb{t-y}^{-\alpha_1}y^{\alpha_1-1}dy=D\int_{(1-\eta)}^{1}\lbrb{1-y}^{-\alpha_1}y^{\alpha_1-1}dy,
	\end{split}
	\end{equation*}
	where $D>0$ is the multiplication of the constant of the potential density and \eqref{eq:intensity}.  However, for any $\eta$ small we then get that 
	\[\limi{t}\Pbb{\sigma_1\lbrb{L_1(t)-}\in\lbrb{\lbrb{1-\eta}t,t}}<\frac{c}{2},\]
	which contradicts \eqref{eq:contra}. Therefore, we conclude that \eqref{eq:hyp1} holds true.
\end{proof}
\begin{proof}[Proof of Corollary \ref{cor:conseq}]
	Relation \eqref{eq:hyp1} is an immediate consequence of Theorem \ref{thm:hyp}. The proof of \eqref{eq:hyp2} in fact carries on using the same arguments as in the proof of Theorem  \ref{thm:hyp}. We summarize them as follows:
	\begin{itemize}
		\item since $2\alpha_1>\alpha_j$ then from Corollary \ref{cor:stochBounds} and Corollary \ref{cor:stochBounds1} we deduct that $\limi{t}\sigma_2(t-H^\circ_t)/\sigma(t)=1$ a.s., where $A_\circ=A\cup [-K,K]$;
		\item on $A_j$ the subordinator $\sigma_2$ is stable of index $\alpha_j$;
		\item therefore \eqref{eq:Close11} of Proposition \ref{prop:Close} is valid and contradiction with it is established, provided $\liminfi{t}\Pbb{X(t)\in A_j}<1$, precisely as in the proof of Theorem \ref{thm:hyp}.
	\end{itemize}
This completes the proof.
\end{proof}
\subsection{Unbounded set}
We consider now the situation \[\limi{x}x^{-c}\mathpzc{l}\lbrb{A\cap\lbbrbb{-x,x}}=a\in\lbrb{0,\infty},c\in\lbbrb{0,1}\] and $A$ unbounded.
Consider $A_1=A\cap\lbbrb{0,\infty}, A_2=A\cap \lbrb{-\infty,0}$. Assume further that
\begin{equation}\label{eq:asymp}
\begin{split}
&\limi{x}x^{-{c_1}}\mathpzc{l}\lbrb{A_1\cap\lbbrbb{0,x}}=a_1\in\lbrb{0,\infty}\\
&\limi{x}x^{-{c_2}}\mathpzc{l}\lbrb{A_2\cap\lbbrbb{-x,0}}=a_2\in\lbrb{0,\infty}
\end{split}
\end{equation}
and without loss of generality that $1> c_1\geq c_2\geq 0$. In this case, we say that $A$ satisfies the growth Assumption $(G)$. Set
\begin{equation}\label{eq:H1}
H_t(A)=\int_{0}^{t}\lbrb{\ind{B_s\in A_1}+\ind{B_s\in A_2}}ds=H^1_t+H^2_t.
\end{equation}
Also we introduce 
\begin{equation}\label{eq:G1}
F_1(t):=\int_{0}^{t}\ind{A_1\cap\lbbrbb{0,x}}dx;\quad F_2(t):=\int_{0}^{t}\ind{A_2\cap\lbbrbb{-x,0}}dx
\end{equation}
and from \eqref{eq:asymp} we have that
\begin{equation}\label{eq:G2}
F_i(t)\sim a_it^{c_i} \text{ $i=1,2$}.
\end{equation}
From \cite[Proposition 9.5]{bertoins} we arrive at 
the following fact 
\begin{lem}\label{lem:inf}
	The subordinators $H^i_{\tau(t)},i=1,2,$ have Laplace exponents $\Phi_i, i=1,2,$ that are regularly varying and of the type
	\begin{equation}\label{eq:inf}
		\Phi_i\lbrb{\lambda}\simo b_i \lambda^{\frac{1}{1+c_i}},\text{ $i=1,2,$}
	\end{equation}
	where $b_i$ are some positive and finite constants. Moreover, for any $\epsilon>0$ small enough,
\begin{equation}\label{eq:inf1}
\begin{split}
&\liminfi{t}\frac{H^i_t}{t^{\frac{1+c_i-\epsilon}{(2+\epsilon)}}}=\infty\text{ and }\limsupi{t}\frac{H^i_t}{t^{\frac{1+c_i
			+\epsilon}{(2-\epsilon)}}}=0\text{ a.s.}
\end{split}
\end{equation}
\end{lem}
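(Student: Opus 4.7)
My plan is to treat the two assertions of Lemma \ref{lem:inf} separately, and then connect the pathwise estimates to the Laplace exponent asymptotic via the $\tau$-bounds of Proposition \ref{prop:Bounds}.

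For \eqref{eq:inf}, I would apply the excursion-theoretic machinery of \cite[Proposition 9.5]{bertoins} to each one-sided occupation subordinator $\chi_i(t):=H^i_{\tau(t)}$ separately. This is legitimate because positive and negative Brownian excursions are independent and contribute to $\chi_1$ and $\chi_2$ on disjoint excursion types (since $A_1\subseteq\Rb^+$, $A_2\subseteq\Rb^-$), so each $\chi_i$ is a drift-free subordinator built from the excursion measure restricted to one half-line. The cited proposition relates the regular variation of $\Phi_i$ at zero to that of $F_i$ at infinity; since $(G)$ makes $x\mapsto\mathpzc{l}(A_i\cap[0,x])$ regularly varying of index $c_i$, integration gives $F_i$ regularly varying of index $c_i+1$. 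The resulting translation yields $\Phi_i(\lambda)\simo b_i\lambda^{1/(1+c_i)}$ for some $b_i\in(0,\infty)$.

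For \eqref{eq:inf1}, I would first derive the corresponding growth of $\chi_i$ itself. When $c_i\in(0,1)$, Theorem \ref{thm:growth}\ref{it:2} supplies a deterministic $f_{\chi_i}$, regularly varying of index $1+c_i$, with $\liminfi{t}\chi_i(t)/f_{\chi_i}(t)=1$ a.s.; Potter's bounds then give $\liminfi{t}\chi_i(t)/t^{1+c_i-\eta}=\infty$ a.s.\ for every small $\eta>0$. For the matching upper bound, $\bar{\Pi}_{\chi_i}$ is regularly varying at infinity of index $-1/(1+c_i)$ (inherited from $\Phi_i$), so with $h(t)=t^{1+c_i+\eta}$ the integral $\int_1^\infty\bar{\Pi}_{\chi_i}(h(t))\,dt$ converges, and Theorem \ref{thm:growth}\ref{it:1} yields $\limi{t}\chi_i(t)/t^{1+c_i+\eta}=0$ a.s. In the boundary case $c_i=0$, $\Phi_i(\lambda)\simo b_i\lambda$ forces $\mathds{E}\chi_i(1)<\infty$ and the SLLN gives $\chi_i(t)/t\to b_i$ a.s., from which the same two-sided bounds follow directly.

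The transfer from $\chi_i$ to $H^i$ is then a direct adaptation of the proof of Proposition \ref{prop:Bounds}. From \eqref{eq:tauLower} and \eqref{eq:tauUpper}, for every small $\eta>0$ one has $s^{2-\eta}\leq\tau(s)\leq s^{2+\eta}$ eventually a.s., so monotonicity of $H^i$ together with $\chi_i(s)=H^i_{\tau(s)}$ yields
\[
H^i_{s^{2-\eta}}\;\leq\;\chi_i(s)\;\leq\; H^i_{s^{2+\eta}}\qquad\text{eventually a.s.}
\]
Plugging in the $\chi_i$-estimates, setting $\eta=\epsilon$, substituting $t=s^{2\pm\epsilon}$, and padding between consecutive integer values of $s$ to interpolate for all large $t$ then produces both halves of \eqref{eq:inf1}. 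I expect the main technical subtlety to lie in the first step: carefully verifying that \cite[Proposition 9.5]{bertoins}, originally stated for a single set, can be invoked on each half-line separately (using the independence of positive and negative excursions of $B$), and in handling the boundary case $c_i=0$ where the target index $1/(1+c_i)=1$ falls outside the regular variation range of Theorem \ref{thm:growth}\ref{it:2} and must be dispatched through the SLLN instead.
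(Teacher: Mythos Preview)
Your proposal is correct and follows essentially the same route as the paper: invoke \cite[Proposition~9.5]{bertoins} for \eqref{eq:inf}, use Theorem~\ref{thm:growth} to get the two-sided growth bounds \eqref{eq:growth} on $\chi_i(t)=H^i_{\tau(t)}$, and then transfer these to $H^i_t$ via the $\tau$-growth estimates \eqref{eq:tauLower}--\eqref{eq:tauUpper}. The only difference is packaging: the paper cites Proposition~\ref{prop:Bounds} directly for the last step (its hypotheses---finite mean or $\Phi_i$ regularly varying of index in $(0,1)$---are exactly your case split $c_i=0$ versus $c_i\in(0,1)$), whereas you re-derive that transfer by hand; your worries about applying \cite[Proposition~9.5]{bertoins} to one-sided sets and about the boundary case $c_i=0$ are legitimate but minor, and the paper does not linger on them either.
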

\begin{proof}
	The proof of \eqref{eq:inf} is immediate from substitution in the quantities involved in the statement of \cite[Proposition 9.5]{bertoins}. Relation \eqref{eq:inf1} follows from the fact that $\Phi_i$ are regularly varying which leads to \eqref{eq:Bounds} of Proposition \ref{prop:Bounds} and the fact  \eqref{eq:inf} ensures that  Theorem \ref{thm:growth} holds true and yield for any $\epsilon>0$ small enough 
	\begin{equation}\label{eq:growth}
	\begin{split}
	\liminfi{t}\frac{H_{\tau\lbrb{t}}^i}{t^{1+c_i-\epsilon}}=\infty\text{ and } \limsupi{t}\frac{H_{\tau\lbrb{t}}^i}{t^{1+c_i+\epsilon}}=0.
	\end{split}
	\end{equation} 
\end{proof}
Let us now assume that $\alpha:\Rb\mapsto\lbrb{0,1}$ and set $\alpha^*=\min_{x\in\Rb}\alpha(x)$. Assume further that $A=\curly{x\in\Rb:\alpha(x)=\alpha^*<1}$ satisfies the growth Assumption (G) with $1>c_1\geq c_2\geq 0$, see \eqref{eq:asymp}. Also let us suppose that $\mathpzc{l}\lbrb{\partial A}=0$ and $A=A_\beta=\curly{x\in\Rb:\alpha(x)<\alpha^*+\beta<1}$ for all $\beta>0$ small enough. The occupation measure of $A$ is as in \eqref{eq:H1} as above. 
The next statement estimates the growth of different pieces of $\sigma.$ Denote
\begin{equation}\label{eq:sigmadec}
	\sigma(t)=\sigma_1\lbrb{H_t}+\sigma_2\lbrb{t-H_t}=\sigma^1_1\lbrb{H^1_t}+\sigma^2_1\lbrb{H^2_t}+\sigma_2\lbrb{t-H_t},
\end{equation}
where $\sigma^i_1$ correspond to the processes localized to $A_i,i=1,2.$

Then we have the result.
\begin{coro}\label{cor:growth} Let $\alpha$ be such that the set $A$ satisfy the conditions above. Assume further that $\lim_{x\to\infty,x\notin A}\alpha(x)=\alpha_I$ and $\lim_{x\to-\infty,x\notin A}\alpha(x)=\alpha_J$. Then for all $\epsilon>0$ small enough
	\begin{equation}\label{eq:sigma1_11}
	\limi{t}\frac{\sigma_1\lbrb{H_t}}{t^{\frac{1+c_1}{2\alpha^*}+\epsilon}}=0;\quad \limi{t}\frac{\sigma^1_1\lbrb{H^1_t}}{t^{\frac{1+c_1}{2\alpha^*}-\epsilon}}=\infty,\,\,\text{a.s.}
	\end{equation}
	and with $\alpha_{\circ}=\min\curly{\alpha_I,\alpha_J}$
	\begin{equation}\label{eq:sigma1_12}
	\limi{t}\frac{\sigma_2\lbrb{t-H_t}}{t^\frac{1}{\alpha_{\circ}+\epsilon}}=0;\quad \limi{t}\frac{\sigma_2\lbrb{t-H_t}}{t^\frac{1}{\alpha_{\circ}-\epsilon}}=\infty,\,\,\text{a.s.}
	\end{equation}
\end{coro}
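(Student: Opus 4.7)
The plan is to mirror the structure of Lemma \ref{lem:GenBounds}, Corollary \ref{cor:stochBounds}, and Corollary \ref{cor:stochBounds1}, replacing the bounded-set occupation estimates of Corollary \ref{cor:Bounds} by the unbounded-set estimates of Lemma \ref{lem:inf}, whose exponents $(1+c_i)/2$ encode the polynomial growth of $\mathpzc{l}(A_i\cap \lbbrbb{0,x})$ afforded by assumption $(G)$. Throughout, the pathwise sandwich of the time-changed subordinators $\sigma_1,\sigma_1^1,\sigma_2$ against genuine stable subordinators (as in the proof of Lemma \ref{lem:GenBounds} and in \eqref{eq:stochBounds2}) will be the recurring mechanism.

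For \eqref{eq:sigma1_11}, the key observation is that on $A=\curly{\alpha=\alpha^*}$ the intensity $\nu(\cdot,x)$ is, for every $x$, identically the stable kernel of index $\alpha^*$, so by \eqref{eq:sigma}--\eqref{eq:constr} the processes $u\mapsto \sigma_1(u)$ and $u\mapsto \sigma_1^1(u)$ are, on their own time scales, genuine $\alpha^*$-stable subordinators defined on the same path space. Applying Corollary \ref{cor1:growth} pathwise gives, a.s., $\sigma_1(u)\leq u^{1/\alpha^*+\epsilon'}$ and $\sigma_1^1(u)\geq u^{1/\alpha^*-\epsilon'}$ for all $u$ large enough. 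Inserting the bounds of Lemma \ref{lem:inf} for $H_t$ (using $H_t\leq 2H_t^1$ eventually, which follows from $c_1\geq c_2$) and for $H_t^1$, and choosing $\epsilon'$ small relative to $\epsilon$, delivers both limits of \eqref{eq:sigma1_11}.

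For \eqref{eq:sigma1_12}, I follow the template of Corollary \ref{cor:stochBounds1}. Fix small $\eta>0$ and pick $K$ so large that $\alpha(x)\in\lbrb{\alpha_\circ-\eta,\alpha_\circ+\eta}$ for every $x\notin A$ with $\abs{x}>K$. Set $A_\circ=A\cup\lbbrbb{-K,K}$, $A_\diamond=\lbbrbb{-K,K}\setminus A$, and decompose
\begin{equation*}
\sigma_2(t-H_t)=\sigma_2(H_t^\diamond)+\sigma_2(t-H_t^\circ).
\end{equation*}
On $\lbbrbb{-K,K}^c\cap A^c$ the intensity kernel of $\sigma_2$ is sandwiched between stable kernels of indices $\alpha_\circ\pm\eta$, and the pathwise thinning/augmentation device of \eqref{eq:stochBounds} yields stable subordinators $\sigma^{\alpha_\circ\pm\eta}$ bounding $\sigma_2(t-H_t^\circ)$ from above and below at time $c(t-H_t^\circ)$. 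Since $H_t^\circ=H_t+H_t^\diamond$ is sublinear (Lemma \ref{lem:inf} for $H_t$, Corollary \ref{cor:Bounds} for $H_t^\diamond$), one has $t-H_t^\circ\sim t$ a.s., and Corollary \ref{cor1:growth} converts the sandwich into $\sigma_2(t-H_t^\circ)\asymp t^{1/\alpha_\circ\pm \delta(\eta)}$ up to arbitrarily small polynomial slack; letting $\eta\downarrow 0$ produces the exponents in \eqref{eq:sigma1_12}.

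The main obstacle is the residual term $\sigma_2(H_t^\diamond)$. Because $A_\diamond$ is bounded and has positive Lebesgue measure, Corollary \ref{cor:Bounds} gives $H_t^\diamond\leq t^{1/2+\epsilon'}$ a.s.; because the standing hypothesis ``$A_\beta=A$ for all small $\beta$'' forces $\inf_{x\notin A}\alpha(x)=\alpha^*+\beta_\star$ for some $\beta_\star>0$, the intensity of $\sigma_2$ restricted to $A_\diamond$ is dominated by a stable kernel of index $\alpha^*+\beta_\star$, and the stochastic sandwich of Lemma \ref{lem:GenBounds} produces $\sigma_2(H_t^\diamond)=O(t^{1/(2(\alpha^*+\beta_\star))+\epsilon''})$. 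The delicate bookkeeping is to check that this residual is genuinely absorbed into the $\sigma_2(t-H_t^\circ)$ term --- which comes down to exploiting that $\alpha^*+\beta_\star$ must be at least $\alpha_\circ/2$ in the regime covered, or else refining $K$ and $\beta_\star$ so that the extremal index governing \eqref{eq:sigma1_12} is indeed $\alpha_\circ$ rather than $\min\curly{\alpha_\circ,2(\alpha^*+\beta_\star)}$. Once this comparison is settled the remaining calculations are routine consequences of Lemma \ref{lem:inf}, Corollary \ref{cor1:growth}, and the pathwise stable sandwich technique developed earlier in the section.
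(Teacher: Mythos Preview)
Your outline matches the paper's own (very terse) proof: for \eqref{eq:sigma1_11} the paper notes, exactly as you do, that on $A$ one has $\alpha\equiv\alpha^*$ so that $\sigma_1$ is a genuine $\alpha^*$-stable subordinator, and then combines Corollary~\ref{cor1:growth} with the occupation bounds \eqref{eq:inf1} of Lemma~\ref{lem:inf}; for \eqref{eq:sigma1_12} the paper simply says ``precisely as in the proof of Corollary~\ref{cor:stochBounds1}, using $t-H_t\sim t$''.

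Two remarks. First, the step ``$H_t\le 2H_t^1$ eventually, which follows from $c_1\ge c_2$'' is not a pathwise consequence of $c_1\ge c_2$. What you actually need, and what the paper uses, is the direct estimate $H_t=H_t^1+H_t^2\le 2t^{(1+c_1+\epsilon')/(2-\epsilon')}$ coming from \eqref{eq:inf1} applied to each $H_t^i$ and $c_1\ge c_2$; no comparison between $H_t^1$ and $H_t^2$ is required.

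Second, your concern about the residual $\sigma_2(H_t^\diamond)$ is well founded, and in fact the paper glosses over it too: following Corollary~\ref{cor:stochBounds1} literally would give the exponent $1/\min\{\alpha_\circ,\,2\inf_{A_\diamond}\alpha\}$ in the first relation of \eqref{eq:sigma1_12}, not $1/\alpha_\circ$. Your proposed fix (``$\alpha^*+\beta_\star\ge\alpha_\circ/2$ in the regime covered, or else refining $K$'') does not work in general and cannot be arranged by enlarging $K$. What saves the downstream application (Theorem~\ref{thm:infRegion}) is that the residual term is always dominated where it matters: in case~(1) one has $1/(2\inf_{A_\diamond}\alpha)<1/(2\alpha^*)\le(1+c_1)/(2\alpha^*)$, so $\sigma_2(H_t^\diamond)$ is beaten by $\sigma_1(H_t)$; in case~(2) the hypothesis $2\alpha^*/(1+c_1)>\alpha_\circ$ forces $2\alpha^*>\alpha_\circ$, hence $2\inf_{A_\diamond}\alpha>\alpha_\circ$, so $\sigma_2(H_t^\diamond)$ is beaten by $\sigma_2(t-H_t^\circ)$. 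Thus the imprecision in the statement of Corollary~\ref{cor:growth} is harmless, but your attempted justification of \eqref{eq:sigma1_12} as written does not close the gap.
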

\begin{proof}
	Since $\alpha(x)=\alpha^*$ on $A$, we have that $\sigma_1=\sigma^{\alpha^*}$, that is stable with index $\alpha^*$ and thus for all $\epsilon$ small enough
	\begin{equation}\label{eq:stochBounds5}
	\begin{split}
	&\liminfi{t}\frac{\sigma^{\alpha^{*}}(H_t)}{\lbrb{H_t}^{\frac{1}{\alpha^*}-\epsilon}}=\infty;\text{ and }
	 \limsupi{t}\frac{\sigma^{\alpha^{*}}(H_t)}{\lbrb{H_t}^{\frac{1}{\alpha^*}+\epsilon}}=0.
	\end{split}
	\end{equation} 
	Then from \eqref{eq:inf1} and using $c_1\geq c_2$ we get the first relation of \eqref{eq:sigma1_11}. The second follows from the first relation of \eqref{eq:stochBounds5} and the fact that $c_1\geq c_2$ combined with \eqref{eq:growth}. Relation \eqref{eq:sigma1_12} is deducted precisely as in the proof of Corollary \ref{cor:stochBounds1} using the facts that $t-H_t\simi t$ since $1>c_1\geq c_2\geq 0$ and \eqref{eq:inf1}.
\end{proof}
We are now in a position to state the main result of this section.
\begin{te}\label{thm:infRegion}
	Let  $\alpha:\Rb\mapsto\lbrb{0,1}$, $\max_{x\in \Rb}\curly{\alpha(x)}<1$ and suppose that $\alpha^*=\min_{x\in\Rb}\alpha(x)>0$. Assume further that $A=\curly{x\in\Rb:\alpha(x)=\alpha^*<1}$ satisfies the growth Assumption (G) with $1>c_1\geq c_2\geq 0$, see \eqref{eq:asymp}. Also let $\mathpzc{l}\lbrb{\partial A}=0$ and $A=A_\beta=\curly{x\in\Rb:\alpha(x)<\alpha^*+\beta<1}$ for all $\beta>0$ small enough. Finally, set $\alpha_{\circ}=\min\curly{\alpha_I,\alpha_J}$, where we have $\lim_{x\to\infty,x\notin A}\alpha(x)=\alpha_I$ and $\lim_{x\to-\infty,x\notin A}\alpha(x)=\alpha_J$. Then,
	\begin{enumerate}
		\item if $\frac{2\alpha^*}{1+c_1}<\alpha_{\circ}$ we have that
		\begin{equation}\label{eq:localize}
		\begin{split}
		 &\limi{t}\frac{\int_{0}^{t}\ind{X(s)\in A}ds}{t}=1\text{ a.s.,}\\
		 &\limi{t}\Pbb{X(t)\in A}=1;
		\end{split}
		\end{equation}
		\item if $\frac{2\alpha^*}{1+c_1}>\alpha_{\circ}$ then for any $K>0$
		      \begin{equation}\label{eq:delocalize}
		      \begin{split}
		      & \limi{t}\frac{\int_{0}^{t}\ind{X(s)\in \lbbrbb{-K,K}^c\cap A^c}ds}{t}=1 \text{ a.s.}
		      \end{split}
		      \end{equation}
	\end{enumerate}
\end{te}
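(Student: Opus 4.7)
\medskip

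The plan is to imitate the proofs of Theorems \ref{thm:occupation} and \ref{thm:hyp}, with Corollary \ref{cor:growth} substituting for Corollaries \ref{cor:stochBounds} and \ref{cor:stochBounds1}. The fundamental dichotomy is between whether $\sigma_1(H_t)$ (growing like $t^{(1+c_1)/(2\alpha^*)\pm\epsilon}$) or $\sigma_2(t-H_t)$ (growing like $t^{1/\alpha_\circ\pm\epsilon}$) dominates $\sigma(t)$, and the condition $2\alpha^*/(1+c_1)\lessgtr\alpha_\circ$ is exactly equivalent to $(1+c_1)/(2\alpha^*)\gtrless 1/\alpha_\circ$.

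For case (1), I first combine \eqref{eq:sigma1_11} and \eqref{eq:sigma1_12}: by choosing $\epsilon>0$ small enough we get
\[
\lim_{t\to\infty}\frac{\sigma_1(H_t)}{\sigma(t)}=1\quad\text{a.s.},
\]
since $\sigma^1_1(H^1_t)\le\sigma_1(H_t)\le\sigma(t)=\sigma_1(H_t)+\sigma_2(t-H_t)$ and the lower bound dominates. Transferring via the time change $t\mapsto L(t)$ gives $\sigma_1(H_{L(t)})/\sigma(L(t))\to 1$ a.s., so the almost-sure version of Proposition \ref{prop:Close} applies with $V=\sigma_1\circ H$. The remainder of the a.s. occupation statement is then identical to the computation at the end of the proof of Lemma \ref{lem:twoLevels} (case $\alpha_2>2\alpha_1$), using the hypothesis $\mathpzc{l}(\partial A)=0$ to guarantee that $\sigma$ a.s.\ does not jump while $B\in\partial A$. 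For the second limit of \eqref{eq:localize}, I follow the proof of Theorem \ref{thm:hyp} line by line: since $\alpha\equiv\alpha^*$ on $A$, the subordinator $\sigma_1$ is stable of index $\alpha^*$, so writing $L_1$ for its inverse,
\[
\limsup_{t\to\infty}\Pbb{X(t)\notin A}\le\limsup_{t\to\infty}\Pbb{\sigma_1(L_1(t)-)\in((1-\eta)t,t)}=D\int_{1-\eta}^{1}(1-y)^{-\alpha^*}y^{\alpha^*-1}dy,
\]
where the explicit integral comes from the potential density of the $\alpha^*$-stable subordinator. Letting $\eta\downarrow 0$ yields $\Pbb{X(t)\in A}\to 1$.

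For case (2), the symmetric application of Corollary \ref{cor:growth} gives $\sigma_2(t-H_t)/\sigma(t)\to 1$ a.s. To localize the occupation outside $[-K,K]$, I would decompose exactly as in Corollary \ref{cor:stochBounds1}: with $A_\circ=A\cup[-K,K]$ and $A_\diamond=A_\circ\setminus A$, write
\[
\sigma_2(t-H_t)=\sigma_2(t-H^\circ_t)+\sigma_2(H^\diamond_t),
\]
where $H^\circ_t,H^\diamond_t$ are the respective occupation times. The hypothesis $A=A_\beta$ for all small $\beta$ forces $\alpha(x)\ge\alpha^*+\beta_0$ uniformly on $A^c$ for some $\beta_0>0$, so the bounded-set machinery of Lemma \ref{lem:GenBounds} and Corollary \ref{cor:stochBounds} applied on the bounded region $A_\diamond$ gives
\[
\sigma_2(H^\diamond_t)=o\bigl(t^{1/(2(\alpha^*+\beta_0))+\epsilon}\bigr),
\]
which is negligible against $\sigma_2(t-H_t)\asymp t^{1/\alpha_\circ-\epsilon}$ because the case-(2) hypothesis gives $\alpha_\circ<2\alpha^*/(1+c_1)\le 2(\alpha^*+\beta_0)$. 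Consequently $\sigma_2(t-H^\circ_t)/\sigma(t)\to 1$ a.s., and the occupation identity \eqref{eq:delocalize} follows by repeating verbatim the final chain of inequalities in the proof of Lemma \ref{lem:twoLevels} (case $2\alpha_1>\alpha_2$).

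The main obstacle I expect is the bookkeeping in case (2): verifying that the bounded region $A_\diamond=(A\cup[-K,K])\setminus A$ contributes negligibly. This hinges on the uniform lower bound $\alpha\ge\alpha^*+\beta_0$ on $A^c$ (itself a consequence of $A=A_\beta$ for all small $\beta$) together with the sublinear Brownian occupation $H^\diamond_t\asymp t^{1/2\pm\epsilon}$ of a bounded set, which together imply the required polynomial comparison. Everything else is a direct transcription of the bounded-case arguments with the improved growth rate $t^{(1+c_1)/(2\alpha^*)}$ in place of $t^{1/(2\alpha^*)}$ on the set $A$.
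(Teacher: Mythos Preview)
Your proposal is correct and follows essentially the same route as the paper: use Corollary~\ref{cor:growth} in place of Corollaries~\ref{cor:stochBounds} and~\ref{cor:stochBounds1} to compare the growth exponents $(1+c_1)/(2\alpha^*)$ and $1/\alpha_\circ$, then transcribe the occupation argument from Lemma~\ref{lem:twoLevels}/Theorem~\ref{thm:occupation} and the probability argument from Theorem~\ref{thm:hyp} (exploiting that $\sigma_1$ is genuinely $\alpha^*$-stable on $A$). The paper's own proof is a three-line pointer to exactly these earlier results; your write-up simply unpacks those pointers, including the $A_\circ=A\cup[-K,K]$ decomposition for case~(2), which is precisely what the reference to Theorem~\ref{thm:occupation} entails.
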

\begin{os}
	Inspection of the proof and the arguments shows that the result is also true provided $c_1=1$, that is $\limi{x}x^{-1}\mathpzc{l}\lbrb{A\cap\lbbrbb{-x,x}}=a\in\lbrb{0,\infty}$, where $A=\curly{x\in\Rb:\,\alpha(x)=\alpha^*}$. Then only \eqref{eq:localize} can hold.
\end{os}
\begin{proof}
The	first relation of \eqref{eq:localize} is established precisely as in the proof of Theorem \ref{thm:occupation} using the different growth for the occupation measure in this case. The	second relation of \eqref{eq:localize} is proved with the same method as in the proof of Theorem \ref{thm:hyp} noting that on $A$, $\sigma_1$ is stable subordinator of index $\alpha^*$ and the contradiction this would trigger thanks to \eqref{eq:Close11} of Proposition \ref{prop:Close} provided we assume that $\liminfi{t}\Pbb{X(t)\in A}<1$. Relation \eqref{eq:delocalize} is again as the proof of Theorem \ref{thm:occupation}.
\end{proof}

\vspace{1cm}
\end{document}